\theoremstyle{plain} \newtheorem{theorem}{Theorem}[section]
\theoremstyle{definition} \newtheorem{definition}[theorem]{Definition}
\theoremstyle{plain} \newtheorem{proposition}[theorem]{Proposition}
\theoremstyle{plain} \newtheorem{lemma}[theorem]{Lemma}
\theoremstyle{plain} \newtheorem{corollary}[theorem]{Corollary}
\theoremstyle{remark} \newtheorem{remark}[theorem]{Remark}
\theoremstyle{remark} 
\theoremstyle{definition} \newtheorem{example}[theorem]{Example}
\newcommand{\numberset}{\mathbb}
\newcommand{\N}{\numberset{N}}
\newcommand{\C}{\numberset{C}}
\newcommand{\Z}{\numberset{Z}}
\newcommand{\lieg}{\mathfrak{g}}
\newcommand{\liel}{\mathfrak{l}}
\DeclareMathOperator{\orb}{\mathcal{O}}
\DeclareMathOperator{\ind}{Ind}
\DeclareMathOperator{\bir}{Bir}
\DeclareMathOperator{\wbir}{WBir}
\DeclareMathOperator{\codim}{codim}
\DeclareMathOperator{\Lie}{Lie}
\DeclareMathOperator{\Ad}{Ad}
\begin{document}

\title{Birational sheets in reductive groups}

\author{Filippo Ambrosio\\
Dipartimento di Matematica ``Tullio Levi-Civita''\\
Torre Archimede - via Trieste 63 - 35121 Padova - Italy\\
ambrosio@math.unipd.it
}
\date{}

\maketitle

\begin{abstract}
We define the group analogue of birational sheets, a construction performed by Losev for reductive Lie algebras. For $G$ semisimple simply connected, we describe birational sheets in terms of Lusztig-Spaltenstein induction and we prove that they form a partition of $G$, and that they are unibranch varieties with smooth normalization by means of a local study.
\end{abstract}

\section{Introduction}
The action of a connected algebraic group $G$ on an algebraic variety $X$ can be studied by gathering orbits in finitely many families to deduce properties shared by orbits in the same collection. One way of grouping orbits together is to form sheets, i.e. maximal irreducibile subsets of $X$ consisting of equidimensional orbits.
In \cite{BK79}, Borho and Kraft studied sheets for the adjoint action of a semisimple connected group $G$ on its Lie algebra $\mathfrak{g}$: the authors considered non-nilpotent orbits as deformations of nilpotent ones of the same dimension to compare the $G$-module structure of their ring of regular functions.
In the same paper, sheets and their closures were described set-theoretically as unions of decomposition classes.
The latter form a partition of $\mathfrak{g}$ into finitely-many, irreducible, smooth, $G$-stable, locally closed subsets, see \cite{Broer_Lectures, broer_1998}.
In \cite{Borho}, Borho described sheets of $\mathfrak{g}$ in terms of Lusztig-Spaltenstein induction.
If $\mathfrak{g} = \mathfrak{sl}_n(\C)$, sheets are parametrized by partitions of $n$ and any two distinct sheets have trivial intersection. This does not hold in general: for example, all simple non-simply laced Lie algebras present two sheets of subregular elements intersecting non-trivially. For $\mathfrak{g}$ simple and classical, all sheets are smooth (see \cite{imhof}), but this does not extend to exceptional Lie algebras (the list of smooth sheets is to appear in \cite{bulois}).
In \cite[§4]{Lo2016}, Losev applied the theory of universal Poisson deformations of conical symplectic singularities to define birational sheets of $\mathfrak{g}$. He proved that, unlike sheets, birational sheets form a partition of $\mathfrak{g}$, they are smooth up to a bijective normalization and all birational sheets of $\mathfrak{g}$ simple and classical are smooth.
Furthermore, the $G$-module structure of the ring of functions of adjoint orbits in the same birational sheet is preserved and Losev conjectures that birational sheets can be parametrized by this invariant.
The group analogue of decomposition classes, called Jordan classes, first appeared in Lusztig’s paper \cite{LusztigICC}: they provide the stratification with
respect to which character sheaves are constructible. Properties of such objects and of
their closures were studied in \cite{CE1} to describe sheets for the conjugacy action
of a reductive group $G$ on itself.

In this work we define a group analogue of Losev's birational sheets.
Motivation behind the study of this problem
is its connection with Representation Theory; in particular, we are interested in the aforementioned Losev's conjecture: a group analogue will be object of study in a forthcoming project.
After introducing some notation, in Section \ref{s_birind} we collect and reorganize existing results on induction of conjugacy classes. Induction of unipotent classes was defined by Lusztig and Spaltenstein in \cite{LS79} and it was then generalized to a non-unipotent conjugacy class in \cite{CE1} readapting arguments of \cite{Borho} to the group case.
Following this approach and inspired by \cite{Lo2016}, we define birational induction and weakly birational induction of a conjugacy class requiring birationality of two related maps and we compare these two notions.
The last part of Section \ref{s_birind} is devoted to extending properties enjoyed by induction to the case of birational induction.
In particular, Lemma \ref{lem_suffbir} states a criterion which gives a sufficient condition for a unipotent conjugacy class to be birationally induced.
The first main result of the work is Theorem \ref{thm_unibirind} in Section \ref{s_unique}, where we prove that any conjugacy class of $G$ can be  weakly birationally induced in a unique way up to $G$-conjugacy under some minimality conditions on the data needed to define induction.
 Section \ref{s_jcbs} recalls some notions on Jordan classes in $G$.
We restrict to the case $G$ semisimple and simply connected to define and describe the birational closure of a Jordan class. 
Subsequently we define birational sheets: in Theorem \ref{thm_partition} we prove that they form a partition of the group. We proceed to compare birational sheets with sheets from a structural point of view. 
The section ends with some remarks on a possible generalization of the results for $G$ not necessarily simply connected, obtained by relaxing the requirements on the induction.
Finally, Section \ref{s_locgeom} analyzes the local geometry of the birational closure of a Jordan class under the assumption that $G$ is semisimple and simply connected, using results from \cite{ACE}.
As an application, for $G$ semisimple simply connected, we show that birational sheets are unibranch with smooth normalization and that birational sheets are smooth in the classical case (Theorems \ref{thm_unibranch} and \ref{thm_smooth}).

\section{Notations and conventions} \label{s_not}
Let $G$ be a complex connected reductive linear algebraic group and let $\mathfrak{g} \coloneqq \Lie(G)$ be its Lie algebra.
For an algebraic subgroup $K \leq G$, we denote by $K^\circ$ its identity component, by $Z(K)$ its centre, and by ${\rm Aut}(K)$ the set of its automorphisms as an algebraic group.
If $X$ is a $K$-set, $X/K$ denotes the set of $K$-orbits.
When $K \leq G$ acts regularly on a variety $X$ and $x \in X$, the $K$-orbit of $x$ is denoted by $K \cdot x$ or $\orb^X_x$. For any $n \in \N$, we define the locally closed subsets $X_{(n)} \coloneqq \{ x \in X \, \mid \, \dim K \cdot x = n\} \subset X$. A \emph{sheet} of $X$ for the action of $K$ is an irreducible component of $X_{(n)}$ for some $n \in \N$ such that $X_{(n)} \neq \varnothing$. For $Y \subset X$, the regular locus of $Y$ is $Y^{reg} = Y \cap X_{(\bar{n})}$, where $\bar{n} = \max \{n \in \N \, \mid \, Y \cap X_{(n)} \neq \varnothing \}$.
For $Y \subset X$, the normalizer of $Y$ in $K$ is
$N_K(Y) \coloneqq \{ k \in K \, \mid \, k \cdot y \in Y \mbox{ for all } y \in Y\}$.
For $x \in X$, its stabilizer is denoted $K_x \coloneqq \{ k \in K \, \mid \, k \cdot x = x\}$.
When we consider the conjugacy (resp. the adjoint) action of $G$ on itself (resp. on $\mathfrak{g}$) we adopt the following notation for stabilizers, very common in the literature.
When $G$ acts on $X = G$ via conjugation, for  $g \in G$, we write $C_G(g)\coloneqq G_g = \{ k \in G \, \mid \, kgk^{-1} = g \}$.
For $Y \subset G$, the centralizer of $Y$ in $G$ is $C_G (Y) \coloneqq \bigcap_{y \in Y} C_G(y)$. 
When $G$ acts on $X = \lieg$ via the adjoint action, for $\eta \in \mathfrak{g}$, we write $C_G(\eta) \coloneqq G_\eta = \{ h \in G \, \mid \, \Ad(h)(\eta) = \eta \}$.
For $g \in G$ we define  $\mathfrak{c_g}(g) \coloneqq \{\xi \in \mathfrak{g} \, \mid \, \Ad(g)(\xi) = \xi \}$. 
Similarly, for $\eta \in \mathfrak{g}$, we write $\mathfrak{c_g}(\eta) \coloneqq \mathfrak{g}_\eta  = \{ \xi \in \mathfrak{g} \, \mid \, [\eta, \xi] = 0 \}$.
We define the centralizer of a Lie subalgebra $\mathfrak{k} \subset \mathfrak{g}$ as $\mathfrak{c_g(k)} \coloneqq \{\xi \in \mathfrak{g} \mid [\eta,\xi] = 0 \mbox{ for all } \eta \in \mathfrak{k}\}$.

Let $T$ be a fixed maximal torus in $G$, $B$ a fixed Borel subgroup of $G$ containing $T$. We set $\mathfrak{g} \coloneqq \mathrm{Lie} ( G )$, $\mathfrak{h} \coloneqq \mathrm{Lie} ( T ) $, $\mathfrak{b} \coloneqq \mathrm{Lie} ( B )$. 
The symbol $P$ (resp. $\mathfrak{p}$) denotes a parabolic subgroup of $G$ (resp. a parabolic subalgebra of $\mathfrak{g}$).
A Levi subgroup $L \leq G$ (resp. a Levi subalgebra $\mathfrak{l} \subset \mathfrak{g}$) is a Levi factor of a parabolic $P \leq G$ (resp. $\mathfrak{p} \subset \mathfrak{g}$).

We denote by $W$ the Weyl group of $G$, by $\Phi$ the root system associated to $T$, by $\Phi^+$ the set of positive roots relative to $B$ and by $\Delta$ the base for $\Phi$ extracted from $\Phi^+$.
For each $\alpha \in \Phi$, we denote by $U_\alpha$ the corresponding root subgroup in $G$.
When $\Phi$ is irreducible, we write $\Delta =\{ \alpha_i \mid i = 1, 2, \dots, n \}$ following the numbering in \cite[Planches I–IX]{Bour} and $-\alpha_0$ for the highest root with respect to $\Delta$. We set $\tilde{\Delta} = \Delta \cup \{\alpha_0 \}$. 

A standard parabolic subgroup is $P \leq G$ such that $B \leq P$: then there is $\Theta \subset \Delta$ such that $P = P_\Theta \coloneqq L_\Theta U_\Theta$, where $L_\Theta \coloneqq \langle T, U_{\alpha}, U_{-\alpha} \,  \mid \, \alpha \in \Theta \rangle$ is called a \emph{standard Levi subgroup} and $U_\Theta = \prod_{\alpha \in \Phi^+ \setminus \Z \Phi} U_\alpha$ is the unipotent radical of $P_\Theta$.
Standard parabolic (resp. standard Levi) subalgebras are the Lie algebras of standard parabolic (resp. standard Levi) subgroups.

For $s \in G$ semisimple, $C_G(s)^\circ$ is called a \emph{pseudo-Levi subgroup}, following \cite{SommersBC}. 
If $\Phi$ is irreducible, any pseudo-Levi subgroup of $G$ is conjugate to a \emph{standard pseudo-Levi group} $M_\Theta \coloneqq \langle T, U_{\alpha}, U_{-\alpha} \, | \, \alpha \in \Theta \rangle$ for some $\Theta \subsetneq \tilde\Delta$, \cite[Proposition 3]{SommersBC}.
Let $M \leq G$ be a pseudo-Levi and let $Z = Z(M)$. For $z \in Z$, we say that the connected component $Z^\circ z \subset Z$ satisfies the regularity property (RP) for $M$ if
\begin{equation} \label{RP}
C_G(Z^\circ z)^\circ = M \tag{RP}.
\end{equation} 

If $K \leq G$ is connected reductive and $\mathfrak{k} \coloneqq \Lie(K)$, we write $\mathcal{U}_K$ 
for the unipotent variety of $K$ and $\mathcal{N}_{\mathfrak{k}}$ 
for the nilpotent cone of $\mathfrak{k}$; we also set $\mathcal{U} \coloneqq \mathcal{U}_G$ and $\mathcal{N} \coloneqq \mathcal{N}_{\mathfrak{g}}$. The set of all $K$-conjugacy classes of $K$ is denoted $K/K$.
A central isogeny $\pi \colon K \to \overline{K}$ is a surjective group homomorphism with $\ker \pi \leq Z(K)$.

When we write $g_1 = su$, $g_2 = rv \in G$ or $\xi = \sigma + \nu \in \mathfrak{g}$ we implicitly assume that $su$ (resp. $rv$, resp. $\sigma + \nu$) is the Jordan decomposition of $g_1$ (resp. $g_2$, resp. $\xi$), with $s, r$ semisimple and $u, v$ unipotent ($\sigma$ semisimple and $\nu$ nilpotent, resp.). By convention, the elements of $\mathfrak{z(g)}$ are semisimple so that, for $\zeta \in \mathfrak{z(g)}$ and $\xi = \sigma + \nu \in \mathfrak{g}$, the semisimple part of $\zeta + \xi$ is $\zeta + \sigma$.

%If $n \in \N \setminus \{0\}$, a partition $\mathbf{d}$ of $n$ is a sequence of non-increasing positive integers $d_1, \dots, d_r$ adding up to $n$, denoted $\mathbf{d} = [d_1, \dots, d_r]$.
%If $\mathbf{d}$ is a partition of $n$, its dual partition is $\mathbf{d}^{t} = \mathbf{f}$, where $f_i = \lvert \{j \mid d_j \geq i\} \rvert$ for all $i$.

In the examples, we will use the following conventions. For $n \in \N \setminus \{0\}$, let $\mathtt{J}'_n$ be the square matrix of order $n$ whose elements on the antidiagonal are $1$ and all other entries are $0$. We denote by
$\mathtt{J}_{2n} \coloneqq \left( \begin{smallmatrix} 0 & \mathtt{J}'_n \\ -\mathtt{J}'_n & 0 \end{smallmatrix} \right)$, and we realize the symplectic group as ${\rm Sp}_{2n}(\C) = \{A \in {\rm GL}_{2n}(\C) \mid A^T \mathtt{J}_{2n} A = \mathtt{J}_{2n} \}$. As fixed Borel we choose the subgroup of upper-triangular matrices in ${\rm Sp}_{2n}(\C)$ and as fixed torus we select the subgroup of diagonal matrices in ${\rm Sp}_{2n}(\C)$.

\section{Birational induction} \label{s_birind}
For a start, we recollect some facts from \cite{BK79, Jantzen2004, Lo2016}.
\subsection{Birationality of the generalized Springer map}
Let $H \leq G$ be closed and let $X$ be an irreducible $H$-variety, then $H$ acts on $G \times X$ via $h \cdot ( g, x ) = ( gh^{-1}, h \cdot x )$ for $h \in H$, $g \in G$, $x \in X$. The orbit set $(G \times X)/H$ is denoted by $G \times^H X$ and it is endowed with the structure of an irreducible variety of dimension $\dim G/H + \dim X$; we write $g * x$ for the class of $(g,x) \in G \times X$. For $g' \in G$ and $g * x \in G \times^H X$, we have a $G$-action on $G \times^H X$ via $g' \cdot (g*x) = g'g*x$.
There is a one-to-one correspondence between $H$-stable subsets of $X$ and $G$-stable subsets of $G \times^H X$ assigning the orbit $H \cdot x$ to the orbit $G \cdot (1 * x)$; we also have $G_{g * x} = g H_x g^{-1}$, for all $g \in G, x \in X$.

Suppose in addition $X$ is a subvariety of a $G$-stable variety $Y$  and that the $H$-action on $X$ is the restriction of the $G$-action on $Y$. 
Then we define a surjective $G$-equivariant morphism
$\gamma:  G \times^H X  \to  G \cdot X$ via $g*x  \mapsto  g \cdot x$.
For $x \in X$, we have (see \cite[Proof of Lemma 7.10]{BK79}):
\begin{align} \label{eq_card}
\lvert \gamma^{-1}(x) \rvert = [G_x : H_x] \left\lvert \dfrac{G \cdot x \cap X}{H} \right\rvert.
\end{align}

\begin{lemma} \label{lem_jantzen}
Let $P \leq G$ be parabolic and let $X$ be a closed $P$-subvariety of the $G$-variety $Y$.
Let $\gamma \colon G \times^P X \to G \cdot X$ be the map $g * x \mapsto g \cdot x$.
Assume:
\begin{gather}
G \cdot X = \overline{\orb}  \mbox{ for some } G \mbox{-orbit } \orb \subset Y \tag{H1} \label{h_1}\\
\dim G \cdot X = \dim G/P + \dim X \tag{H2} \label{h_2}. 
\end{gather}
Then:
\begin{enumerate}[noitemsep, nolistsep]
\item[(i)]$\orb \cap X$ is a single $P$-orbit;
\item[(ii)]$\widetilde{\orb} \coloneqq \gamma^{-1}(\orb)$ is a single $G$-orbit, open and dense in $G \times^P X$;
\item[(iii)] There is a natural isomorphism $G / P_x \xrightarrow{\sim} \widetilde{\orb}$ and $\gamma$ restricts to an unramified covering $\widetilde{\orb} \to \orb$ of degree $[G_x : P_x]$.
\end{enumerate}
\end{lemma}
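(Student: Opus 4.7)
The plan is to tackle the three assertions in order, with most of the geometric content concentrated in part (i). Setting $\widetilde{\orb} \coloneqq \gamma^{-1}(\orb)$, I would first note that by (\ref{h_2}), $G \times^P X$ and $\overline{\orb} = G \cdot X$ are irreducible of the same dimension, so $\gamma$ is a dominant, generically finite, surjective morphism; hence $\widetilde{\orb}$ is open in $G \times^P X$ by continuity, and dense as soon as it is nonempty. To secure nonemptiness, equivalent to $X \cap \orb \neq \varnothing$, I would argue by contradiction: if $X \cap \orb = \varnothing$, then $X$ would lie in the $G$-stable proper closed subset $\overline{\orb} \setminus \orb$ of $\overline{\orb}$ (using that $\orb$ is open in $\overline{\orb}$), so that $G \cdot X$ would also lie there, contradicting (\ref{h_1}).

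For (i), I would fix $y \in \orb \cap X$ and extract the crucial dimension identity
\begin{equation*}
\dim X = \dim P - \dim G_y,
\end{equation*}
obtained by combining (\ref{h_2}) with $\dim G \cdot X = \dim G - \dim G_y$. Using $P_y = G_y \cap P$ and $\dim P_y \leq \dim G_y$, this yields $\dim(P \cdot y) = \dim P - \dim P_y \geq \dim X$, with equality forced by $P \cdot y \subset X$. Consequently $P \cdot y$ is open dense in the irreducible variety $X$; since two dense $P$-orbits must meet (hence coincide), $\orb \cap X$ is a single $P$-orbit. A byproduct of this argument is $\dim P_y = \dim G_y$, whence $[G_y : P_y] < \infty$.

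For (ii), given $g_i * x_i \in \widetilde{\orb}$ for $i = 1, 2$, I would observe $x_i = g_i^{-1} \cdot (g_i \cdot x_i) \in \orb \cap X$, apply (i) to produce $p \in P$ with $x_1 = p \cdot x_2$, and conclude $g_1 * x_1 = g_1 p * x_2 \in G \cdot (1 * x_2)$. For (iii), I would identify the stabilizer of $1 * x$ under the $G$-action as $P_x$, yielding a $G$-equivariant isomorphism $G/P_x \xrightarrow{\sim} \widetilde{\orb}$ under which $\gamma|_{\widetilde{\orb}}$ becomes the canonical projection $G/P_x \to G/G_x \cong \orb$; this is a fiber bundle with finite fiber $G_x/P_x$, hence an unramified cover of degree $[G_x : P_x]$. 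The main (and really only) computational step is the dimension identity in (i), which is the sole place where (\ref{h_2}) is essentially used; everything else is formal manipulation of the quotient $G \times^P X$.
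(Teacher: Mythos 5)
Your proof is correct, but it takes a more self-contained route than the paper. The paper does not argue (i)--(iii) from scratch: it reduces the lemma to Jantzen's Lemma 8.8 (adapted from a $P$-submodule of a $G$-module to the closed $P$-variety $X\subset Y$), and the actual work consists of verifying Jantzen's hypotheses, namely that there exists $x\in X$ with $\overline{P\cdot x}=X$ and $G_x^\circ\leq P$. This verification goes through Chevalley's theorem on fibre dimensions (to get, via \eqref{h_2}, finiteness of the fibres of $\gamma$ over $\orb$) combined with the cardinality formula \eqref{eq_card} of Borho--Kraft, which simultaneously yields $G_x^\circ\leq P$ and that $\orb\cap X$ is a finite union of $P$-orbits, whence $X=\overline{P\cdot x}$ by irreducibility and \eqref{h_1}. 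In your argument the single dimension count $\dim X=\dim P-\dim G_y$, forced by \eqref{h_1} and \eqref{h_2}, does all of this work at once: it gives $\dim P_y=\dim G_y$ (so $G_y^\circ\leq P$ and $[G_y:P_y]<\infty$) and the density of $P\cdot y$ in $X$, after which the identification of $\widetilde{\orb}$ as the single orbit $G\cdot(1*x)\simeq G/P_x$ and of $\gamma|_{\widetilde{\orb}}$ with the finite covering $G/P_x\to G/G_x$ replaces the appeal to Jantzen. What your route buys is independence from \eqref{eq_card} and from the adaptation of Jantzen's module-theoretic statement; what the paper's route buys is brevity and an explicit acknowledgement that the lemma is essentially Jantzen's, with only the hypotheses needing a new check. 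Both proofs use \eqref{h_1} identically (to see that $\orb\cap X$ is nonempty and open in $X$) and both hinge on \eqref{h_2} at exactly one point.
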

\begin{proof}
We want to make use of \cite[Lemma 8.8]{Jantzen2004}. Observe first that $G \times^P X$ identifies with a locally closed subvariety of $G/P \times Y$ via the closed embedding $g*x \mapsto (gP, g \cdot x)$ (see \cite[§7.9]{BK79}).
Jantzen's proof still holds substituting a $P$-submodule of a $G$-module with the closed $P$-stable variety $X$ inside the $G$-stable variety $Y$.
Moreover, if $\varphi_x \colon G \to \orb^G_x$ is the orbit map, the condition $\Lie(G_x) = \ker ({\rm d}_1 \varphi_x)$ is always fulfilled when the base field is $\C$ (see \cite[Remark on Lemma 8.8]{Jantzen2004}).
For these reasons, to prove our result, we can proceed as in \cite[Lemma 8.8]{Jantzen2004}, provided that we show that its assumptions are satisfied, namely that 
there exists $x \in X$ with $\overline{P \cdot x} = X$ and $G^\circ_x \leq P$.
By \cite[Theorem 5.1.6]{Spr}, there exists a non-empty open $U \subset G \cdot X$ whose fibres through $\gamma$ are finite: indeed \eqref{h_2} implies $\dim \gamma^{-1}(x) = \dim G \times^P X - \dim U = 0$ for all $x \in U$.
Moreover, $\orb$ is open in $\overline{\orb} = G \cdot X$ by \eqref{h_1} so that $U \cap \orb \neq \varnothing$.
By $G$-equivariance of $\gamma$, we have that $\gamma^{-1}(x)$ is finite for all $x \in \orb$.
Again by \eqref{h_1}, the set $X$ meets $\orb$ non-trivially.
By \eqref{eq_card}, for $x \in \orb \cap X$, we have $G_x^\circ \leq P$ and $\orb \cap X$ is a union of finitely many $P$-orbits. 
Since $X$ is irreducible and $\orb \cap X$ is open in $\overline{\orb} \cap X = X$ by \eqref{h_1}, we have $X = \overline{\orb \cap X} = \overline{P \cdot x}$, for some $x \in \orb \cap X$.
\end{proof}

Let $P = LU$ be the Levi decomposition of $P$.
Let $\orb^L \in L/L$ and specialize the above construction to the case $X = \overline{\orb^L}U$, $Y = G$, where $P$ acts on $X$ by conjugacy.
The \emph{generalized Springer map} is:
\begin{align} \label{gsp}
\gamma \colon  G \times^P \overline{\orb^L}U & \to  G \cdot (\overline{\orb^L}U) \tag{GSM} \\
g*x & \mapsto  g x g^{-1} \nonumber
\end{align}
The image of $\gamma$ is the closure of a single conjugacy class $\orb^G \in G/G$, called the conjugacy class \emph{induced} from $(L, \orb^L)$, see \cite{LS79, CE1}.
The definition of induced conjugacy class only depends on the Levi $L \leq G$ and on the class $\orb^L$ and not on the parabolic $P$ containing $L$.
For any Levi subgroup $L \leq G$, we introduce notation $\mathrm{Ind}_L^G \colon L/L \to G/G$ associating to each class $\orb^L \in L/L$ the induced conjugacy class $\mathrm{Ind}_L^G \orb^L \in G/G$.
We have $\codim_G (\mathrm{Ind}_L^G \orb^L) = \codim_L \orb^L$ (see \cite[Proposition 4.6]{CE1}). 
A unipotent conjugacy class $\orb^G$ is \emph{rigid} in $G$ if it cannot be induced by a proper Levi subgroup $L \lneq G$ and a unipotent class $\orb^L \in \mathcal{U}_L/L$.

The case of birational induction of adjoint orbits has been studied in \cite{broer_1998, Nami2009, Fu2010, Lo2016}. Here we deal with birationality of $\gamma$ for induction of conjugacy classes.

\begin{lemma}[Birationality criterion] \label{lem_bir_crit}
Let $P \leq G$ be a parabolic subgroup with Levi decomposition $P=LU$, let $\orb^L \in L/L$,  let $\orb^G = \ind_L^G \orb^L$, let $\gamma$ as in \eqref{gsp}, and $\widetilde{\orb} \coloneqq \gamma^{-1}(\orb^G)$. The following are equivalent:
\begin{enumerate}[noitemsep,nolistsep]
\item[(i)] $\gamma$ is birational;
\item[(ii)] $\gamma$ maps $\widetilde{\orb}$ isomorphically to $\orb^G$;
\item[(iii)] for all $x \in  \orb^G \cap \overline{\orb^L} U$, we have $C_G(x) = C_P(x)$;
\item[(iv)] there exists $x \in  \orb^G \cap \overline{\orb^L} U$ such that $C_G(x) = C_P(x)$.
\end{enumerate}
\end{lemma}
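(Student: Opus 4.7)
The plan is to apply Lemma \ref{lem_jantzen} to $X = \overline{\orb^L} U$ and then translate the cardinality of a generic fibre of $\gamma$ into the index $[C_G(x) : C_P(x)]$. Hypothesis \eqref{h_1} is immediate from the definition of induction, namely $G \cdot \overline{\orb^L} U = \overline{\orb^G}$. For \eqref{h_2}, the recalled equality $\codim_G \orb^G = \codim_L \orb^L$ rearranges to $\dim \orb^G = \dim G - \dim L + \dim \orb^L$, which coincides with $\dim G/P + \dim X = \dim G - \dim P + \dim \orb^L + \dim U = \dim G - \dim L + \dim \orb^L$.

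Lemma \ref{lem_jantzen} then provides three facts: $\orb^G \cap \overline{\orb^L} U$ is a single $P$-orbit; $\widetilde{\orb}$ is open and dense in $G \times^P \overline{\orb^L} U$; and $\gamma|_{\widetilde{\orb}} \colon \widetilde{\orb} \to \orb^G$ is a finite étale cover of degree $d \coloneqq [C_G(x) : C_P(x)]$ for any $x \in \orb^G \cap \overline{\orb^L} U$, the value $d$ being independent of the chosen $x$. Since $C_P(x) \leq C_G(x)$ always holds, the centralizer equality in (iii) is precisely the condition $d = 1$. Because all points of $\orb^G \cap \overline{\orb^L} U$ are $P$-conjugate by Lemma \ref{lem_jantzen}(i), and the condition $C_G(x) = C_P(x)$ is stable under conjugation by $P$, the equivalence (iii) $\Leftrightarrow$ (iv) is immediate.

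For (ii) $\Leftrightarrow$ (iii): a finite étale cover of degree one onto an irreducible (smooth homogeneous) base is an isomorphism, so $d = 1$ is equivalent to $\gamma|_{\widetilde{\orb}}$ being an isomorphism onto $\orb^G$. For (i) $\Leftrightarrow$ (ii): since $\widetilde{\orb}$ and $\orb^G$ are open and dense in the source and target of $\gamma$ respectively, (ii) $\Rightarrow$ (i) is tautological; conversely, if $\gamma$ is birational then its generic fibre has cardinality one, hence $d = 1$, and we conclude by the previous reduction. The only delicate point of the whole argument is the dimension identity establishing \eqref{h_2}; after that, everything collapses to the standard fact that a finite étale cover of degree one onto a connected base is trivial.
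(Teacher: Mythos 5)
Your proposal is correct and follows essentially the same route as the paper: it applies Lemma \ref{lem_jantzen} to $X=\overline{\orb^L}U$ after checking \eqref{h_1} and \eqref{h_2} via the dimension identity $\dim\orb^G=\dim G/P+\dim X$, identifies the covering degree with $[C_G(x):C_P(x)]$ (which is the content of \eqref{eq_card} and Lemma \ref{lem_jantzen}(iii)), and uses the single-$P$-orbit statement to pass between (iii) and (iv). The only cosmetic difference is that you phrase the (i)$\Leftrightarrow$(ii) step via ``degree-one \'etale cover is an isomorphism'' where the paper argues injectivity from birationality and $G$-equivariance; the two arguments are interchangeable here.
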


\begin{proof}
We want to apply Lemma \ref{lem_jantzen} with $P$ acting by conjugacy on $X = \overline{\orb^L}U$ and $\orb = \orb^G \in G/G$.
Remark that $\dim G \cdot (\overline{\orb^L}U) = \dim \orb^G = \dim G - \dim L + \dim \orb^L = \dim G/P + \dim \orb^L + \dim U = \dim G/P + \dim (\overline{\orb^L}U)$, hence both \eqref{h_1}, \eqref{h_2} are satisfied.
 
Let (i) hold. By Lemma \ref{lem_jantzen}, $\gamma$ induces a finite covering $\widetilde{\orb} \to \orb^G$ and these sets are open and dense, this covering is injective by birationality and $G$-equivariance, i.e. $\gamma(\widetilde{\orb}) \simeq \orb^G$. The converse implication is trivial.
(ii) holds if and only if the degree of $\gamma$ over $\orb^G$ is $1$. By \eqref{eq_card} this is equivalent to $[C_G(x):C_P(x)] = 1$ for $x \in \orb^G \cap \overline{\orb^L} U $, that is (iii).
Finally (iii) obviously implies (iv) and since $\orb^G \cap \overline{\orb^L}U$ is a single $P$-orbit, (iv) implies (iii) by $P$-equivariance.
\end{proof}

\subsection{Reduction to the unipotent case}
We will make use of the following result:
\begin{lemma} \label{lem_itsalevi}
Let $M \leq G$ be a pseudo-Levi subgroup, $z \in Z \coloneqq Z(M)$. Then $Z^\circ z$ satisfies \eqref{RP} for $M$ if and only if $M$ is a Levi subgroup of $C_G(z)^\circ$.

In particular, if $M = C_G(s)^\circ$ for a semisimple element $s \in G$, then $Z(M)^\circ s$ satisfies \eqref{RP} for $M$.
\end{lemma}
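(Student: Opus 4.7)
The plan is to rewrite the centralizer $C_G(Z^\circ z)$ as an intersection, pass to identity components carefully using the connectedness of centralizers of tori, and then reduce (RP) to a statement about Levi subgroups of the pseudo-Levi $H := C_G(z)^\circ$. Note first that $z$ is semisimple because it is central in the reductive group $M$, so $H$ is indeed connected reductive and contains $M$ (since $z \in Z(M)$ and $M$ is connected). The key identity is
\begin{equation*}
C_G(Z^\circ z) = C_G(Z^\circ) \cap C_G(z),
\end{equation*}
which I would justify by observing that centralizing the coset $Z^\circ z$ forces one to centralize ratios $(t_1 z)(t_2 z)^{-1} \in Z^\circ$, hence $Z^\circ$, and then $z$ itself.

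Next set $L := C_G(Z^\circ)$, which is a Levi of $G$ and in particular connected, as the centralizer of a torus in a connected reductive group. I would then compute
\begin{equation*}
C_G(Z^\circ z)^\circ = (L \cap C_G(z))^\circ = L \cap C_G(z)^\circ = C_H(Z^\circ),
\end{equation*}
the second equality holding because $C_G(z)^\circ$ is open in $C_G(z)$, so $L \cap C_G(z)^\circ$ is an open subgroup of $L \cap C_G(z)$ and in fact coincides with its identity component since it equals $C_H(Z^\circ)$, the centralizer of the torus $Z^\circ$ in the connected reductive group $H$, hence connected. This reduces (RP) to the statement $M = C_H(Z^\circ)$.

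With this reformulation, the two implications are short. If $M$ is a Levi of $H$, then $M = C_H(S)$ for some subtorus $S \leq H$; since $S$ is connected and centralizes $M$, one has $S \subseteq Z(M)^\circ = Z^\circ$, so $C_H(Z^\circ) \subseteq C_H(S) = M$, and the opposite inclusion is immediate from $Z^\circ \subseteq Z(M)$. Conversely, if $M = C_H(Z^\circ)$, then $M$ is tautologically a Levi of $H$, being the centralizer in $H$ of the subtorus $Z^\circ$. The ``in particular'' part is then trivial: when $M = C_G(s)^\circ$, one has $H = M$, and $M$ is a Levi of itself.

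The main technical point is the interchange of ``identity component'' with the intersection $L \cap C_G(z)$: this is where the connectedness of $L$ (a Levi, hence centralizer of a torus in $G$) and the connectedness of $C_H(Z^\circ)$ (a centralizer of a torus in $H$) are both used. Once that commutation is secured, the rest of the argument is a formal manipulation of Levi subgroups via the identity $L = C_H(Z(L)^\circ)$ for any Levi $L$ of a connected reductive group.
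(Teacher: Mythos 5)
Your argument is correct and essentially the same as the paper's: both reduce \eqref{RP} to the identity $C_G(Z^\circ z)^\circ = C_{C_G(z)^\circ}(Z^\circ)$ using connectedness of centralizers of tori in connected groups, and then conclude via the correspondence between Levi subgroups of $C_G(z)^\circ$ and centralizers of tori therein. One small caveat: the converse step you call ``tautological'' is in fact the standard but nontrivial theorem that the centralizer of a torus in a connected reductive group is a Levi factor of a parabolic, which the paper invokes explicitly from \cite{borel}.
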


\begin{proof}
We prove first that $C_G(Z^\circ z)^\circ = C_G(z)^\circ \cap C_G(Z^\circ) = C_{C_G(z)^\circ}(Z^\circ)$. 
The chain of inclusions $C_G(Z^\circ z)^\circ \leq C_G(z)^\circ \cap C_G(Z^\circ) \leq C_G(Z^\circ z)$ is trivial and $C_G(z)^\circ \cap C_G(Z^\circ) = C_{C_G(z)^\circ}(Z^\circ)$ is connected by \cite[Corollary 11.12]{borel}, hence $C_G(z)^\circ \cap C_G(Z^\circ) \leq C_G(Z^\circ z)^\circ$.

Assume $Z^\circ z$ satisfies \eqref{RP} for $M$, then $M = C_G(Z^\circ z)^\circ =  C_{C_G(z)^\circ}(Z^\circ)$, which is a Levi subgroup of $C_G(z)^\circ$ by\cite[Corollary 20.4]{borel}.
Conversely, if $M$ is a Levi in $C_G(z)^\circ$, then by \cite[Proposition 14.18]{borel}, $M = C_{C_G(z)^\circ}(Z(M)^\circ) = C_{C_G(z)^\circ}(Z^\circ) = C_G(Z^\circ z)^\circ$.

The last statement follows directly.
\end{proof}

\begin{remark} \label{rk_itsalevi}
Let $M \leq G$ be a pseudo-Levi subgroup, $z \in Z \coloneqq Z(M)$. Then $Z^\circ z$ satisfies \eqref{RP} if and only if $Z^{reg} \cap Z^\circ z \neq \varnothing$, see \cite[Remark 3.6]{CE1}.
\end{remark}

In the remainder of this section, we recall how induction of a conjugacy class of $G$ is related to induction of a unipotent class in a pseudo-Levi subgroup of $G$.

Let $L \leq G$ be a Levi subgroup and let $P = LU \leq G$ be a parabolic with Levi factor $L$ and let $su \in L$.
It was proven in \cite[Proposition 4.6]{CE1} that:
\begin{equation} \label{induzione}
% \mathrm{Ind}_L^G  \orb^L_{su} =   G \cdot ( s \mathrm{Ind}_{C_L(s)^\circ}^{C_G(s)^\circ} \orb^{C_L(s)^\circ}_u ).\\
  \mathrm{Ind}_L^G  \orb^L_{su} =   G \cdot ( s \mathrm{Ind}_{C_L(s)^\circ}^{C_G(s)^\circ} \orb^{C_L(s)^\circ}_u ).
\end{equation} 
Notice that $C_L(s)^\circ$ is a Levi subgroup in $C_G(s)^\circ$ (see \cite[Proof of Proposition 4.6]{CE1}).

Induction is transitive, i.e. if $M \leq G$ is a Levi subgroup, $L$ is a Levi subgroup of $M$  and $\orb^L_{su} \in L/L$, then:
\begin{align*}
\ind_M^G ( \ind_L^M \orb^L_{su} ) &= \ind_M^G(M \cdot (s \ind_{C_L(s)^\circ}^{C_M(s)^\circ} \orb^{C_L(s)^\circ}_u)) =\\
&= G \cdot (s \ind_{C_M(s)^\circ}^{C_G(s)^\circ}( \ind_{C_L(s)^\circ}^{C_M(s)^\circ} \orb^{C_L(s)^\circ}_u)) =\\
&= G \cdot (s \ind_{C_L(s)^\circ}^{C_G(s)^\circ} \orb^{C_L(s)^\circ}_u) = \ind_M^G \orb^L_{su},
\end{align*}
where we used \cite[§1.7]{LS79}. 

We can assume that $T \leq L$ and $s \in T$. If $P \geq B$, we have that $C_B(s)^\circ = C_B(s)$ is a Borel subgroup of $C_G(s)^\circ$ and $C_P(s)^\circ$ is a parabolic subgroup of $C_G(s)^\circ$. The equality $C_P(s)^\circ = P \cap C_G(s)^\circ$ holds and $C_L(s)^\circ$ is a Levi factor of $C_P(s)^\circ$; we write $ U_{C_P(s)^\circ}$ for the unipotent radical of $C_P(s)^\circ$.
We compare the two morphisms:
\begin{align}
\gamma \colon G \times^P \overline{\orb^L_{su}}U  &\to \overline{\mathrm{Ind}_L^G \orb^L_{su}} \label{ggsp} \\
\gamma_s \colon C_G(s)^\circ \times^{C_P(s)^\circ} \overline{\orb^{C_L(s)^\circ}_u} U_{C_P(s)^\circ}  &\to \overline{\mathrm{Ind}_{C_L(s)^\circ}^{C_G(s)^\circ} \orb^{C_L(s)^\circ}_u} \label{ugsp}
\end{align}

\begin{lemma} \label{lem_gsp_conn}
Let $\gamma$ and $\gamma_s$ be as in \eqref{ggsp} and \eqref{ugsp}, respectively. Set $\orb^G \coloneqq \mathrm{Ind}_L^G \orb^L_{su}$ and $\orb^{C_G(s)^\circ} \coloneqq \mathrm{Ind}_{C_L(s)^\circ}^{C_G(s)^\circ} \orb^{C_L(s)^\circ}_u$. Then:
\begin{enumerate}[noitemsep, nolistsep]
\item[(i)] Birationality of $\gamma$ implies birationality of $\gamma_s$.
\item[(ii)] Suppose in addition that $C_G(s)=C_G(s)^\circ$. If $\gamma_s$ is birational, then $\gamma$ is birational. In particular, if $G$ is semisimple simply connected, this is always the case.
\end{enumerate}
\end{lemma}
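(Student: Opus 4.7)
The plan is to use the birationality criterion of Lemma \ref{lem_bir_crit}: $\gamma$ is birational iff $C_G(x) = C_P(x)$ for some (equivalently, any) $x \in \orb^G \cap \overline{\orb^L_{su}}U$, and $\gamma_s$ is birational iff $C_{C_G(s)^\circ}(y) = C_{C_P(s)^\circ}(y)$ for some $y \in \orb^{C_G(s)^\circ} \cap \overline{\orb^{C_L(s)^\circ}_u} U_{C_P(s)^\circ}$. The strategy is to build a bridge between ``big'' and ``small'' test points: given $u' \in \orb^{C_G(s)^\circ} \cap \overline{\orb^{C_L(s)^\circ}_u} U_{C_P(s)^\circ}$, I will set $x = su'$ and show $x \in \orb^G \cap \overline{\orb^L_{su}}U$. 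This inclusion follows by writing $u' = v u''$ with $v \in \overline{\orb^{C_L(s)^\circ}_u}$ and $u'' \in U_{C_P(s)^\circ} \subset U$, and noting that $s \cdot \orb^{C_L(s)^\circ}_u \subset \orb^L_{su}$ (since $s$ commutes with everything in $C_L(s)^\circ$), together with \eqref{induzione} for the orbit statement.

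For part (i), I assume $\gamma$ is birational, pick $u'$ and set $x = su'$ as above; then $C_G(x) = C_G(s) \cap C_G(u') = C_P(s) \cap C_P(u')$ by hypothesis. Using that $C_P(s)^\circ = P \cap C_G(s)^\circ$ and intersecting with $C_G(s)^\circ$ gives
\[
C_{C_G(s)^\circ}(u') = C_G(s)^\circ \cap C_G(u') \subset C_P(s) \cap C_G(s)^\circ \cap C_G(u') = C_P(s)^\circ \cap C_G(u') = C_{C_P(s)^\circ}(u'),
\]
and the reverse inclusion is trivial. Thus the criterion for $\gamma_s$ is met.

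For part (ii), I assume $C_G(s) = C_G(s)^\circ$ and that $\gamma_s$ is birational, so by Lemma \ref{lem_bir_crit}(iv) there exists $u'$ with $C_{C_G(s)^\circ}(u') = C_{C_P(s)^\circ}(u')$. Setting $x = su'$ and again using $u' \in C_G(s)^\circ$, I compute $C_G(x) = C_G(s) \cap C_G(u') = C_G(s)^\circ \cap C_G(u') = C_{C_G(s)^\circ}(u') = C_{C_P(s)^\circ}(u') \subset C_P(s) \cap C_P(u') = C_P(x)$, the reverse inclusion being trivial. Lemma \ref{lem_bir_crit}(iv) then gives birationality of $\gamma$. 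The simply connected case follows from Steinberg's theorem on connectedness of centralizers of semisimple elements in simply connected semisimple groups.

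The main subtle point I expect is verifying that $su' \in \orb^G \cap \overline{\orb^L_{su}}U$ from the corresponding ``small'' membership; everything else is a bookkeeping manipulation of centralizers, with the connectedness hypothesis in (ii) doing precisely the work of passing from $C_G(s) \cap C_G(u')$ to $C_G(s)^\circ \cap C_G(u')$, which is where the implication would otherwise fail.
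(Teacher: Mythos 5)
Your proposal is correct and follows essentially the same route as the paper: both reduce to the criterion of Lemma \ref{lem_bir_crit}, bridge the two settings by testing at the point $su'$ (using that $u'\in C_G(s)^\circ$ is unipotent, so $C_G(su')=C_G(s)\cap C_G(u')$, and that $s\orb^{C_L(s)^\circ}_u U_{C_P(s)^\circ}\subset \orb^L_{su}U$ with \eqref{induzione} giving $su'\in\orb^G$), and use $C_P(s)^\circ=P\cap C_G(s)^\circ$ together with connectedness of $C_G(s)$ (Steinberg) exactly where the paper does.
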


\begin{proof}
We will prove birationality verifying condition (iv) of Lemma \ref{lem_bir_crit}.

(i) Suppose $\gamma$ is birational.
Let $v \in \orb^{C_L(s)^\circ}_u U_{C_P(s)^\circ} \cap \orb^{C_G(s)^\circ}$.
Then $sv \in  \orb^G = G \cdot (s \orb^{C_G(s)^\circ})$ by \eqref{induzione} and $sv \in s\orb^{C_L(s)^\circ}_u U_{C_P(s)^\circ} \subset \orb^L_{su} U$, so that $sv \in \orb^L_{su} U \cap \orb^G$.
Since $v \in C_G(s)^\circ$,  the unipotent part of $sv$ is $v$.
Birationality of $\gamma$ yields $C_G(sv) \leq P$, so $C_{C_G(s)^\circ}(v) \leq C_{C_G(s)}(v) \leq P$.
Now $C_{C_G(s)^\circ}(v) \leq C_G(s)^\circ$, hence $C_{C_G(s)^\circ}(v) \leq P \cap C_G(s)^\circ = C_P(s)^\circ$ and  $\gamma_s$ is birational.

(ii) Assume that $C_G(s)$ is connected and that  $\gamma_s$ is birational.
Choose an element $sv \in s\orb^{C_G(s)^\circ} \cap s\orb^{C_L(s)^\circ} U_{C_P(s)^\circ} \subset \orb^G \cap \orb^L_{su}$. Then $C_G(sv) = C_{C_G(s)^\circ}(v) \leq {C_P(s)^\circ} \leq P$.
\end{proof}

The following example shows that in general, the birationality of $\gamma_s$ does not imply the birationality of $\gamma$.
\begin{example}
Let $\widetilde{G} = {\rm Sp}_4(\C)$ and let $G = {\rm PSp}_4(\C) = \widetilde{G}/Z(\widetilde{G})$, we denote its elements by matrices in square brackets.
Let $s=
\left[\begin{smallmatrix}
-1&0&0&0\\
0&1&0&0\\
0&0&1&0\\
0&0&0&-1\end{smallmatrix}\right] \in G$.
Let $P \coloneqq P_{\Theta}$ be the standard parabolic 
with $\Theta = \{\alpha_2\}$,  unipotent radical $U$ and Levi factor $L  \coloneqq L_{\Theta}$.
Remark that $C_G(s)^\circ \lneq C_G(s)$ and that $C_L(s)^\circ = L$.
Let $u$ be in the regular unipotent class of $L$.
Then $\ind_L^{C_G(s)^\circ} (\orb^L_{u}) = \orb^{C_G(s)^\circ}_v$ with $v 
=\left[\begin{smallmatrix}
1&0&0&-1\\
0&1&1&0\\
0&0&1&0\\
0&0&0&1\end{smallmatrix}\right]$ while $\ind_L^G \orb^L_{su} = \orb^G_{sv}$.
We have that $\gamma_s$ as in \eqref{ugsp} is birational, since $C_{C_G(s)^\circ}(v) \leq C_P(s)^\circ$. On the other hand, $\gamma$  as in \eqref{ggsp} is not birational, because $C_G(sv) \not\leq P$, for example
$\left[\begin{smallmatrix}
0&1&0&0\\
1&0&0&0\\
0&0&0&1\\
0&0&1&0\end{smallmatrix}\right] \in C_G(sv) \setminus P$.
\end{example}

\subsubsection{Induction of adjoint orbits in a Lie algebra} \label{sss_lie_uguale}
In this section we describe birational induction in the case of the adjoint action of $G$ on its Lie algebra $\mathfrak{g}$.
We follow the approach of \cite[\S 1.4 and \S 4]{Lo2016}: we alert the reader that in \cite{Lo2016} most results are formulated in terms of $G$-coadjoint orbits of $\mathfrak{g}^*$.
It is possible to translate all the statements in terms of $G$-adjoint orbits of $\mathfrak{g}$ after choosing a $G$-equivariant non-degenerate symmetric associative bilinear form on $\mathfrak{g}$ (see \cite[\S 4, Proposition 5]{bourba1}),  which yields a $G$-equivariant isomorphism of vector spaces $\mathfrak{g} \simeq \mathfrak{g}^*$.

For a Levi subalgebra $\mathfrak{l} \subset \mathfrak{g}$, let $\zeta \in \mathfrak{z(l)}$ and $\orb^\mathfrak{l} \in \mathcal{N}_\mathfrak{l}/L$\footnote{We remark that induction in $\mathfrak{g}$ can be defined for the adjoint orbit of any element $\sigma + \nu$ in a Levi subalgebra $\mathfrak{l} \subset \mathfrak{g}$, see \cite[\S 2]{Borho}. The reduction to the adopted definition can be obtained from \cite[Satz 2.1, 3. Fall]{Borho}.}.
Include $\mathfrak{l}$ as the Levi factor of a parabolic subalgebra $\mathfrak{p} = \mathfrak{l} + \mathfrak{n}$ and let $P \leq G$ be such that $\Lie(P) = \mathfrak{p}$.
The group $P$ acts via the adjoint action on $(\zeta + \overline{\orb^\mathfrak{l}} + \mathfrak{n})$ and we have the generalized Springer map:
\begin{align} \label{gsm_losev}
\gamma \colon  G \times^P (\zeta + \overline{\orb^\mathfrak{l}} + \mathfrak{n}) & \to  \Ad(G)(\zeta + \overline{\orb^\mathfrak{l}} + \mathfrak{n}) \\
g*\eta & \mapsto  \Ad(g)(\eta) \nonumber
\end{align}
Then ${\rm im} (\gamma) = \overline{\orb^\mathfrak{g}}$ for a unique $\orb^\mathfrak{g} \in \mathfrak{g}/G$,
called the orbit \emph{induced from the induction data} $(\mathfrak{l}, \zeta, \orb^\mathfrak{l})$ and denoted $\ind_\mathfrak{l}^\mathfrak{g} (\zeta + \orb^\mathfrak{l})$.
The orbit $\orb^\mathfrak{g} = \ind_\mathfrak{l}^\mathfrak{g} (\zeta + \orb^\mathfrak{l})$ is nilpotent if and only if $\zeta = 0$: in this case we write $\orb^\mathfrak{g}$ is induced from $(\mathfrak{l}, \orb^\mathfrak{l})$.

When $\gamma$ is birational, $\ind_\mathfrak{l}^\mathfrak{g} (\zeta + \orb^\mathfrak{l})$ is said to be \emph{birationally induced} from $(\mathfrak{l}, \zeta, \orb^\mathfrak{l})$. A nilpotent orbit $\orb \subset \mathfrak{g}$ is said to be \emph{(birationally) rigid} if it cannot be (birationally) induced from a {\em proper} Levi subalgebra $\mathfrak{l} \subsetneq \mathfrak{g}$.

We have $\codim_\mathfrak{g} \orb^{\mathfrak{g}} = \codim_\mathfrak{l} (\zeta + \orb^{\mathfrak{l}})$, see \cite[Satz 3.3]{Borho}. Therefore, the hypotheses of Lemma \ref{lem_jantzen} are satisfied with $X = \zeta + \overline{\orb^\mathfrak{l}} + \mathfrak{n}$ and $Y = \mathfrak{g}$. Moreover, an analogue of Lemma \ref{lem_bir_crit} holds.

Since $\zeta \in \lieg$ is semisimple, $C_G(\zeta)$ is connected, see \cite[§3]{Steinberg75}.
In the setting of \eqref{gsm_losev}, we have $\Lie(C_G(\zeta)) = \mathfrak{c_g}(\zeta)$ and $C_P(\zeta) = P \cap C_G(\zeta)$ is a parabolic subgroup of $C_G(\zeta)$; moreover, $\mathfrak{l}$ is a Levi factor of the parabolic subalgebra $\mathfrak{p}_\zeta \coloneqq \Lie(C_P(\zeta))$, write $\mathfrak{n}_\zeta$ for its nilradical. Consider the generalized Springer map:
\begin{equation} \label{gsm_nilp}
\gamma_\zeta \colon  C_G(\zeta) \times^{C_P(\zeta)} (\overline{\orb^{\mathfrak{l}}} + \mathfrak{n}_\zeta)  \to  \Ad(C_G(\zeta))(\overline{\orb^\mathfrak{l}} + \mathfrak{n}_\zeta) 
\end{equation}

\begin{remark} \label{rk_lie}
The orbit $\orb^\mathfrak{g}$ is birationally induced from $(\mathfrak{l}, \zeta, \orb^\mathfrak{l})$ if and only if the nilpotent orbit
$\ind_\mathfrak{l}^{\mathfrak{c_g}(\zeta)} \orb^\mathfrak{l} \in \mathfrak{c_g}(\zeta)/C_G(\zeta)$ is birationally induced from $(\mathfrak{l}, \orb^\mathfrak{l})$.
\end{remark}

\begin{proof}
Lemma \ref{lem_gsp_conn} still holds with the necessary adjustments, so that $\gamma$ in \eqref{gsm_losev} is birational if and only if $\gamma_\zeta$ in \eqref{gsm_nilp} is birational.
\end{proof}

%Let $\mathfrak{p}$ be a parabolic subalgebra of $\mathfrak{g}$ with Levi decomposition $\mathfrak{p} = \mathfrak{l} + \mathfrak{n}$, $\mathfrak{l} \supset \mathfrak{h}$. Let $P = LU \subset G$ be such that $\Lie(P) = \mathfrak{p}$, $\Lie(L) = \mathfrak{l}$, $\Lie(U) = \mathfrak{n}$.
%Let $\xi = \sigma + \nu \in \mathfrak{l}$, with $\sigma \in \mathfrak{h}$.
%The orbit \emph{induced} from $\orb^\mathfrak{l}_{\xi}$, denoted $\ind_\mathfrak{l}^\mathfrak{g} \orb^\mathfrak{l}_\xi$, is the unique adjoint orbit of $\mathfrak{g}$ whose closure coincides with $\Ad(G)(\overline{\orb^\mathfrak{l}_{\xi}} + \mathfrak{n})$, see \cite[§2]{Borho}.
%
%Arguing as in the proof of \cite[Satz 2.1, 3. Fall]{Borho}, we can reformulate induction as in \cite[1.4]{Lo2016}.

\subsection{Birationality for induction of conjugacy classes}
In this section we discuss the definition of birational induction of a class $\orb^G_{sv} \in G/G$. 

\begin{definition} \label{defn_birind}
Let $su \in L$ and let $\orb^G = \mathrm{Ind}_L^G \orb^L_{su}$.  We say that $\orb^G$ is:
\begin{enumerate}[noitemsep, nolistsep]
\item[(a)] \emph{birationally induced} from $(L, \orb^L_{su})$ if the generalized Springer map
\begin{align*}
\gamma \colon G \times^P \overline{\orb^L_{su}}U  &\to \overline{\mathrm{Ind}_L^G \orb^L_{su}}
\end{align*}
defined in \eqref{ggsp} is birational;
\item[(b)] \emph{weakly birationally induced} from $(L, \orb^L_{su})$ if the generalized Springer map
\begin{align*}
\gamma_s: C_G(s)^\circ \times^{C_P(s)^\circ} \overline{\orb^{C_L(s)^\circ}_u} U_{C_P(s)^\circ}  &\to \overline{\mathrm{Ind}_{C_L(s)^\circ}^{C_G(s)^\circ} \orb^{C_L(s)^\circ}_u} 
\end{align*} defined in \eqref{ugsp} is birational.
\end{enumerate}
\end{definition}

\begin{remark} \label{rk_coincide}
It is a consequence of Lemma \ref{lem_gsp_conn} that if $\orb^G_{sv} \in G/G$ is birationally induced from $(L, \orb^L_{su})$ then it is also weakly birationally induced from $(L, \orb^L_{su})$.
Moreover, the two notions coincide when $G$ is semisimple simply connected or when $\orb^G = z\orb^G_u$ for $z \in Z(G)$ and $u \in \mathcal{U}$ (this is the case if and only if the inducing orbit is $z \orb^L$ with $\orb^L \in \mathcal{U}_L/L$).
For this reason, in such cases, we will always omit the adverb ``weakly''.
\end{remark}

We may drop one, or both of the elements of the pair of inducing data $(L, \orb^L_{su})$ in the notation when they are clear from the context or they are not relevant. In particular, we will say that the class $\orb^G \in G/G$ is (non-trivially) birationally induced (resp. weakly birationally induced) if there exists a proper Levi subgroup $L \lneq G$ and a conjugacy class $\orb^L \in L/L$ such that $\orb^G$ is birationally induced (resp. weakly birationally induced) from $(L, \orb^L)$.

In the following, we focus on  induction of unipotent classes and we show that most properties carry over to the birational case.

\subsubsection{Interaction with isogeny}
Induction and birational induction behave well with respect to Springer's isomorphism $\phi: \mathcal{N} \xrightarrow{\sim} \mathcal{U}$. We recall that $G$-equaviariance of $\phi$ yields $C_G(\nu) = C_G(\phi(\nu))$ for any $\nu \in \mathcal{N}$; in particular, the homogeneous spaces $\orb^\mathfrak{g}_\nu$ and $\orb^G_{\phi(\nu)}$ are isomorphic varieties, see \cite[Theorems 1.1, 1.2]{LiebeckSeitz}.

\begin{remark} \label{rk_ind_isogeny}
Let $\pi \colon G \to \overline{G}$ be an isogeny. Fix a Levi subgroup $L \leq G$ and let $\mathfrak{l} \coloneqq \Lie(L)$, set $\pi(L) = \overline{L}$.
Let $\phi_L \colon \mathcal{N}_\mathfrak{l} \xrightarrow{\sim} \mathcal{U}_L$ be Springer's isomorphism for $L$ and $\phi \colon \mathcal{N} \xrightarrow{\sim} \mathcal{U}$ Springer's isomorphism for $G$.
Let $\orb^{\mathfrak{l}} \in \mathcal{N}_\mathfrak{l}/L$.
\begin{enumerate}[noitemsep,nolistsep]
\item[(i)] $\phi(\ind_{\mathfrak{l}}^\mathfrak{g}{\orb^{\mathfrak{l}}}) = \ind_L^G (\phi_L(\orb^{\mathfrak{l}}))$. In particular, rigid 
nilpotent orbits in $\mathfrak{g}$ correspond to rigid unipotent classes in $G$.
\item[(ii)] If $\orb^L \in \mathcal{U}_L$, then $\pi \left( \ind_L^G \orb^L \right) = \ind_{\overline{L}}^{\overline{G}} (\pi(\orb^L))$, because Springer's isomorphism does not depend on the isogeny class of $G$.
\item[(iii)] A nilpotent orbit $\orb^\mathfrak{g} \in \mathcal{N}/G$ is birationally induced from $(\mathfrak{l}, \orb^{\mathfrak{l}})$ if and only if $\phi(\orb^\mathfrak{g})$ is birationally induced from $(L, \phi_L (\orb^{\mathfrak{l}}) )$, as Springer's isomorphism preserves centralizers.
\item[(iv)] Since (iii) is true independently of the isogeny class of $G$, the class $\orb^G \in \mathcal{U}/G$ is birationally induced from $(L, \orb^L)$ if and only if  $\pi(\orb^G)$ is birationally induced from $(\overline{L}, \pi(\orb^L))$.
\end{enumerate}
\end{remark}

\subsubsection{Independence of the choice of a parabolic subgroup} \label{sss_indeparab}
In \cite[§4]{Lo2016} it is proven that birationality of induction of a nilpotent orbit is independent of the parabolic subgroup. The result therein is a consequence of deformation theory. Here we prove that birationality of induction of a unipotent class $\orb^G$ only depends on the $G$-conjugacy class of the inducing pair $(L, \orb^L)$. Our proof is obtained  independently of Losev's analogue result.
We recall a notion that will be needed in the proof:  if $f \colon X \to Y$ is a dominant rational map of varieties, the degree of $f$ is defined as the degree of the function fields extension $\deg f \coloneqq [\C(X) \colon \C(Y)]$.
Moreover, if $f$ is a finite map and $y \in Y$ is a generic point, then $\left\vert f^{-1}(y) \right\vert = \deg f$, see \cite[Proposition 7.16]{Harris}.

\begin{lemma}
Consider two parabolic subgroups $P, Q \leq G$ with Levi decompositions $P = LU_P, Q =  LU_Q$ respectively. Let $\orb^L \in \mathcal{U}_L/L$ and set $\orb^G = \ind_L^G(\orb^L)$.
Then the generalized Springer map $\gamma_P \colon G \times^P (\overline{\orb^L} U_P) \to \overline{\orb^G}$ is birational if and only if the generalized Springer map $\gamma_Q \colon G \times^Q (\overline{\orb^L} U_Q) \to \overline{\orb^G}$ is birational.
\end{lemma}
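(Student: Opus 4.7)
The plan is to prove the stronger statement that $\deg \gamma_P$ depends only on the pair $(L, \orb^L)$ and not on the parabolic $P$ with Levi $L$. Combined with Lemmas \ref{lem_jantzen}(iii) and \ref{lem_bir_crit}, which together identify birationality of $\gamma_P$ with the condition $\deg \gamma_P = 1$, this yields the lemma immediately.

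First I would read off from Lemma \ref{lem_jantzen}(iii) the two formulas $\deg \gamma_P = [C_G(x) : C_P(x)]$ for $x \in \orb^G \cap \overline{\orb^L} U_P$ and $\deg \gamma_Q = [C_G(y) : C_Q(y)]$ for $y \in \orb^G \cap \overline{\orb^L} U_Q$. Writing $y = g_0 x g_0^{-1}$ with $g_0 \in G$, the second rewrites as $[C_G(x) : C_G(x) \cap g_0^{-1} Q g_0]$, so the comparison reduces to one between two subgroups of $C_G(x)$, both of which contain the identity component $C_G(x)^\circ$ (cf.\ the dimension argument in the proof of Lemma \ref{lem_jantzen}).

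The central step is to transfer the computation to the Lie algebra via Springer's isomorphism $\phi \colon \mathcal{N} \xrightarrow{\sim} \mathcal{U}$. Put $\orb^\mathfrak{l} \coloneqq \phi_L^{-1}(\orb^L)$ and $\orb^\mathfrak{g} \coloneqq \phi^{-1}(\orb^G) = \ind_\mathfrak{l}^\mathfrak{g} \orb^\mathfrak{l}$. Because $\phi$ is $G$-equivariant and satisfies $C_G(\nu) = C_G(\phi(\nu))$ for every $\nu \in \mathcal{N}$ (as recalled just before Remark \ref{rk_ind_isogeny}), the same covering/stabilizer count used for $\gamma_P$ produces $\deg \gamma_P = \deg \gamma_P^\mathfrak{g}$, where $\gamma_P^\mathfrak{g}$ is the Lie algebra generalized Springer map \eqref{gsm_losev} attached to $(\mathfrak{l}, \orb^\mathfrak{l})$; the identical equality holds with $Q$ in place of $P$. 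The problem is thus reduced to the analogous parabolic-independence statement for nilpotent induction in $\mathfrak{g}$, which is classical and can be extracted from the induction formalism in \cite[\S 3]{Borho} via the simultaneous normalizer of the inducing pair $(\mathfrak{l}, \orb^\mathfrak{l})$. This route bypasses Losev's deformation-theoretic machinery, as the author wishes.

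The main obstacle is precisely this last reduction: producing a self-contained, manifestly parabolic-free description of $\deg \gamma_P^\mathfrak{g}$ in the nilpotent Lie algebra setting, without tacitly invoking Losev. Once this is secured, the chain $\deg \gamma_P = \deg \gamma_P^\mathfrak{g} = \deg \gamma_Q^\mathfrak{g} = \deg \gamma_Q$ closes the argument, and birationality of one map is equivalent to birationality of the other.
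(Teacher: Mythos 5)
Your overall route coincides with the paper's: transfer the question to the nilpotent Lie algebra setting via Springer's isomorphism and then invoke independence of the parabolic for the degree of the Lie-algebra generalized Springer map. The problem is that you leave precisely this last step unproved. You assert that a parabolic-free description of $\deg \gamma^{\mathfrak{g}}_P$ is ``classical and can be extracted from the induction formalism in \cite[\S 3]{Borho} via the simultaneous normalizer of the inducing pair'', and then you yourself flag it as ``the main obstacle''. Borho's \S 3 contains the construction of induced orbits, the codimension formula and transitivity, but no formula for the degree of $G \times^P (\overline{\orb^{\mathfrak{l}}} + \mathfrak{n}_{\mathfrak{p}}) \to \overline{\orb^{\mathfrak{g}}}$, and the appeal to a ``simultaneous normalizer'' is a pointer, not an argument. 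Since the whole content of the lemma is exactly this parabolic-independence, the proposal as written is an outline whose pivotal ingredient is missing. The paper closes this point by citation: in \cite[Proof of Corollary 3.9]{BorhoMacPh} the degree of $\gamma_{\mathfrak{p}}$ over the open orbit is computed purely in terms of the pair $(\mathfrak{l}, \orb^{\mathfrak{l}})$, hence is the same for $\mathfrak{p}$ and $\mathfrak{q}$; combined with Remark \ref{rk_ind_isogeny} this yields the statement. To complete your write-up you must either quote that result or reproduce the degree computation.

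A smaller point: your intermediate equality $\deg \gamma_P = \deg \gamma^{\mathfrak{g}}_P$ needs a word of justification beyond ``Springer's isomorphism preserves centralizers''. The degree on the group side is $[C_G(x):C_P(x)]$ for $x \in \orb^G \cap \overline{\orb^L}U_P$ and on the Lie-algebra side $[C_G(\xi):C_P(\xi)]$ for $\xi$ in the dense $P$-orbit of $\overline{\orb^{\mathfrak{l}}} + \mathfrak{n}_{\mathfrak{p}}$; if these representatives are not matched by the chosen isomorphism, the comparison of indices is itself parabolic-sensitive, which is the very issue at stake. Over $\C$ this is easily repaired by taking $\phi = \exp$: since the projection $P \to L$ intertwines $\exp$ with $\exp_L$, one gets $\exp(\overline{\orb^{\mathfrak{l}}} + \mathfrak{n}_{\mathfrak{p}}) \subset \overline{\orb^L}U_P$, so one may choose $x = \exp(\xi)$ and the two indices agree because $C_G(\xi) = C_G(x)$ and $C_P(\xi) = C_G(\xi) \cap P = C_P(x)$. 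With that fix, and with the Borho--MacPherson degree formula supplied, your chain $\deg \gamma_P = \deg \gamma^{\mathfrak{g}}_P = \deg \gamma^{\mathfrak{g}}_Q = \deg \gamma_Q$ does prove the lemma, and it is then essentially the paper's proof.
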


\begin{proof}
Set $\mathfrak{p} \coloneqq \Lie(P)$, $\mathfrak{q} \coloneqq \Lie(Q)$ with Levi decompositions $\mathfrak{p} = \mathfrak{l} + \mathfrak{n_p}$ and $\mathfrak{q} = \mathfrak{l} + \mathfrak{n_q}$, respectively.
Let $\orb^\mathfrak{l} \in \mathcal{N}_\mathfrak{l}/L$ (resp. $\orb^\mathfrak{g} \in \mathcal{N}/G$) such that $\phi_L(\orb^\mathfrak{l}) = \orb^L$ (resp. $\phi(\orb^\mathfrak{g}) = \orb^G$). Consider the generalized Springer maps $
\gamma_\mathfrak{p} \colon G \times^P (\overline{\orb^\mathfrak{l}} + \mathfrak{n_p}) \to \overline{\orb^\mathfrak{g}}$ and $\gamma_\mathfrak{q} \colon G \times^Q (\overline{\orb^\mathfrak{l}} + \mathfrak{n_q}) \to \overline{\orb^\mathfrak{g}}$.
By Remark \ref{rk_ind_isogeny}, $\gamma_P$ (resp. $\gamma_Q$) is birational if and only if $\gamma_\mathfrak{p}$ (resp. $\gamma_\mathfrak{q}$) is birational. 
The degrees of $\gamma_\mathfrak{p}$ and of $\gamma_\mathfrak{q}$ are the same.
This follows from \cite[Proof of Corollary 3.9]{BorhoMacPh} where a formula for the degree of the Springer generalized map $\gamma_\mathfrak{p}$ is given in terms of $(\mathfrak{l}, \orb^\mathfrak{l})$, and these data are independent of the parabolic.
\end{proof}

\begin{remark}
Let $\mathfrak{l}$ be a Levi subalgebra of $\mathfrak{g}$, let $\zeta \in \mathfrak{z(l)}$ and $\orb^\mathfrak{l} \in \mathcal{N}_\mathfrak{l}/L$.
Then, by \S \ref{sss_lie_uguale}, $\ind_\mathfrak{l}^\mathfrak{g} (\zeta + \orb^\mathfrak{l})$ is birationally induced if and only if $\ind_\mathfrak{l}^{\mathfrak{c_g}(\zeta)} \orb^\mathfrak{l}$ is birationally induced. 
As a result, birationality of the induction of any $\orb^\mathfrak{g} \in \mathfrak{g}/G$ is independent of the chosen parabolic.
\end{remark}

\begin{remark} \label{rk_conj_data_bir}
Let $\vartheta \in {\rm Aut}(G)$, let $L \leq G$ be a Levi subgroup and $\orb^L \in L/L$. Then $\vartheta$ preserves parabolic subgroups, their Levi decompositions, and unipotent varieties of Levi subgroups. Thus, $\vartheta (\ind_L^G \orb^L) = \ind_{\vartheta(L)}^G \vartheta(\orb^L)$. 
By Lemma \ref{lem_bir_crit}, birationality of unipotent induction is preserved under action by $\mathrm{Aut}(G)$ on the inducing data because $\vartheta$ maps centralizers to centralizers. In particular, for  $g \in G$ and $\orb^L_u \in \mathcal{U}_L/L$, the class $\orb^G \coloneqq \ind_L^G(\orb^L_u)$ is birationally induced from $(L, \orb^L_u)$ if and only if $\orb^G$ is birationally induced from $(gLg^{-1}, \orb^{gLg^{-1}}_{gug^{}-1})$.
\end{remark}

\subsubsection{A sufficient condition for birationality}
Next result can be used to test if a unipotent class is birationally induced.
\begin{lemma} \label{lem_suffbir}
Let $\phi: \mathcal{N} \to \mathcal{U}$ be Springer's isomorphism and let $\overline{G} = G/Z(G)$. Let $\nu \in \mathcal{N}$ and let $\orb \coloneqq \orb^{\mathfrak{g}}_\nu$. If $C_{\overline{G}}(\nu)/C_{\overline{G}}(\nu)^\circ = \{1\}$, then:
\begin{enumerate}[noitemsep, nolistsep]
\item[(i)] if $\orb = \ind_\mathfrak{l}^\mathfrak{g} \orb^\mathfrak{l}$ for a Levi subalgebra $\mathfrak{l} \subset \mathfrak{g}$ and $\orb^\mathfrak{l} \in \mathcal{N}_\mathfrak{l}/L$, then $\orb$ is birationally induced from $(\mathfrak{l}, \orb^\mathfrak{l})$;
\item[(ii)] if $\phi(\orb) = \ind_L^G \orb^L$ for a Levi subgroup $L \leq G$ and $\orb^L \in \mathcal{U}_L/L$, then $\phi(\orb)$ is birationally induced from $(L, \orb^L)$.
\end{enumerate}
\end{lemma}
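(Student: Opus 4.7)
My plan is to reduce to verifying criterion~(iv) of Lemma~\ref{lem_bir_crit} — or, in part~(i), its Lie-algebra analogue indicated just before Remark~\ref{rk_lie} — so it will suffice to exhibit one point $\nu'$ of the intersection $\orb \cap (\overline{\orb^{\mathfrak{l}}}+\mathfrak{n})$ at which $C_G(\nu') = C_P(\nu')$, where $\mathfrak{p} = \mathfrak{l}+\mathfrak{n}$ and $\Lie(P) = \mathfrak{p}$. Lemma~\ref{lem_jantzen} already yields $C_G(\nu')^\circ \leq P$, so all I need to control is the component group of $C_G(\nu')$.

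The key observation is that the adjoint representation has kernel $Z(G)$, hence factors through $\overline{G}$ and produces a short exact sequence
\[
1 \longrightarrow Z(G) \longrightarrow C_G(\nu') \longrightarrow C_{\overline{G}}(\nu') \longrightarrow 1,
\]
the surjection on the right holding because any preimage in $G$ of an element centralising $\nu'$ in $\overline{G}$ centralises $\nu'$. A dimension count using $Z(G)^\circ \leq C_G(\nu')^\circ$ shows the restricted map $C_G(\nu')^\circ \to C_{\overline{G}}(\nu')^\circ$ is surjective. Since $C_{\overline{G}}(-)$ is invariant under $G$-conjugacy, the hypothesis passes from $\nu$ to $\nu'$, so every $g \in C_G(\nu')$ projects into $C_{\overline{G}}(\nu')^\circ$ and therefore lies in $Z(G)\cdot C_G(\nu')^\circ$. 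Thus $C_G(\nu') = Z(G)\cdot C_G(\nu')^\circ$.

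To conclude (i) I combine this with $Z(G) \leq T \leq P$ for any maximal torus $T \leq P$ together with $C_G(\nu')^\circ \leq P$, obtaining $C_G(\nu') \leq P$ and hence $C_G(\nu') = C_P(\nu')$. For (ii) I repeat the same argument with $\phi(\nu)$ in place of $\nu$, noting that $G$-equivariance of Springer's isomorphism gives $C_G(\phi(\nu)) = C_G(\nu)$ and hence the analogous component-group hypothesis; alternatively one could invoke Remark~\ref{rk_ind_isogeny}(iii) to transfer the birationality of $\gamma$ across Springer. The only genuinely delicate point I anticipate is the surjectivity of the restricted map $C_G(\nu')^\circ \twoheadrightarrow C_{\overline{G}}(\nu')^\circ$ on identity components; beyond that the argument is just chasing inclusions and dimensions.
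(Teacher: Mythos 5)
Your proposal is correct and follows essentially the same route as the paper: verify criterion (iv) of Lemma \ref{lem_bir_crit} (in its Lie-algebra form) by showing that connectedness of $C_{\overline{G}}(\nu)$ forces $C_G(\nu') = Z(G)\, C_G(\nu')^\circ \leq P$, and handle (ii) by transferring birationality through Springer's isomorphism as in Remark \ref{rk_ind_isogeny}(iii), which is exactly what the paper does. You merely spell out the steps (the exact sequence with kernel $Z(G)$, surjectivity on identity components, and $Z(G) \leq P$) that the paper's proof leaves implicit.
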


\begin{proof}
Let $P \leq G$ be parabolic with Levi decomposition $P=LU$ and let $\orb^\mathfrak{l} \in \mathcal{N}_\mathfrak{l}/L$ such that $\ind_\mathfrak{l}^\mathfrak{g} \orb^\mathfrak{l} = \orb$. If $C_{\overline{G}}(\nu) = C_{\overline{G}}(\nu)^\circ$, then $C_G(\nu) = C_P(\nu)$, hence (i) follows from Lemma \ref{lem_bir_crit}. Statement (ii) is a consequence of Remark \ref{rk_ind_isogeny}.
\end{proof}

\begin{example} Let $G$ be simple of type $\mathsf{C}$ and let $u \in \orb^G_{subreg}$, the subregular orbit in $\mathcal{U}$. Let $\Theta_{1} = \{\alpha_1\}$ and $\Theta_n = \{\alpha_n\}$.
Set $L_i \coloneqq L_{\Theta_i}$, then $\orb^G_{subreg}$ is induced both by $(L_1, \{1\})$ and   $(L_n, \{1\})$.
A direct computation of $C_G(u)$ shows that $\orb^G_{subreg}$ is birationally induced from $(L_1, \{1\})$, whereas the induction $\ind_{L_n}^G \{1\}$ is not birational.
In fact, $C_{\overline{G}}(u) / C_{\overline{G}}(u)^\circ \simeq \Z / 2 \Z$.
\end{example}

If $G$ is simple adjoint and $\nu \in \mathcal{N}$, the groups $C_G(\nu) / C_G(\nu)^\circ$ are known, see \cite[\S 6.1, \S 8.4]{CollMcGov}.

\subsubsection{Transitivity of birational induction}
For a Levi subgroup $M \leq G$, let $L$ be a Levi subgroup of $M$ and let $\orb^L \in \mathcal{U}_L/L$. We want to prove that $\ind_L^G \orb^L$ is birationally induced from $(L, \orb^L)$ if and only if $\ind_L^M \orb^L$ is birationally induced from $(L, \orb^L)$ and $\ind_M^G (\ind_L^M \orb^L)$ is birationally induced from $(M, \ind_L^M \orb^L)$.

We can work with a standard parabolic subgroup $P$ with Levi decomposition $P = LU$, where $L \leq G$ is a standard Levi.
Let $Q \geq P$ be parabolic with Levi decomposition $Q = MV$ where $M \leq G$ is a standard Levi subgroup. We have $L \leq M$ and $U \geq V$.
Moreover, $P \cap M$ is a parabolic subgroup of $M$ (standard with the choice $M \cap B$ for the Borel of $M$), with Levi decomposition of $P \cap M = L (U \cap M)$ and $U = (U \cap M) V$.
Let $\orb^M \coloneqq \ind_L^M \orb^L$ and $\orb^G \coloneqq \ind_L^G \orb^L = \ind_M^G \orb^M$ with the corresponding generalized Springer maps:
\begin{align*}
\gamma_L^G: G \times^P \overline{\orb^L}U \twoheadrightarrow \overline{\orb^G}, \\
\gamma_L^M: M \times^{P \cap M} \overline{\orb^L}(U \cap M) \twoheadrightarrow \overline{\orb^M}, \\
\gamma_M^G: G \times^Q \overline{\orb^M} V  \twoheadrightarrow \overline{\orb^G}.
\end{align*}

\begin{proposition}[Transitivity of birational induction] \label{prop_trans}
The map $\gamma_L^G$ is birational if and only if the maps $\gamma_L^M$ and $\gamma_M^G$ are birational.
\end{proposition}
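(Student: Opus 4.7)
The plan is to establish both implications simultaneously via a multiplicativity-of-degrees argument. By the codimension formula $\codim_G\orb^G=\codim_L\orb^L$ and its analogues, the hypotheses of Lemma \ref{lem_jantzen} are met for all three maps $\gamma_L^G,\gamma_L^M,\gamma_M^G$; each is therefore a generically finite dominant morphism with a well-defined degree (a positive integer) equal to $1$ precisely when the map is birational. It thus suffices to prove
$$\deg\gamma_L^G \;=\; \deg\gamma_L^M\,\cdot\,\deg\gamma_M^G.$$

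First I would factor $\gamma_L^G$ through $\gamma_M^G$. Applying Lemma \ref{lem_jantzen}(i) to $\gamma_L^M$ yields $\overline{\orb^L}(U\cap M)\subseteq\overline{\orb^M}$; combined with $U=(U\cap M)V$ and $P\leq Q$, the inclusion $\overline{\orb^L}U=\overline{\orb^L}(U\cap M)V\subseteq\overline{\orb^M}V$ induces a natural $G$-equivariant morphism
$$\rho\colon G\times^P\overline{\orb^L}U\longrightarrow G\times^Q\overline{\orb^M}V,\qquad g*y\mapsto g*y,$$
satisfying $\gamma_L^G=\gamma_M^G\circ\rho$. Multiplicativity of degrees reduces the task to showing $\deg\rho=\deg\gamma_L^M$.

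For this, I would compute a generic fibre of $\rho$ directly. Pick $z=mv_*\in\orb^G\cap\overline{\orb^M}V$ in its unique $Q=MV$ decomposition, with $m\in\orb^M\cap\overline{\orb^L}(U\cap M)$; such $z$ exist because $\orb^M V$ is open in $\overline{\orb^M}V$ and one may $M$-conjugate $m$ into the open $(P\cap M)$-orbit of Lemma \ref{lem_jantzen}(i). Then $\rho^{-1}(g*z)$ identifies with $P\backslash\{q\in Q:qzq^{-1}\in\overline{\orb^L}U\}$ under left multiplication. Writing $q=m'v_0$ and projecting by $\pi_M\colon Q\twoheadrightarrow M$, the condition becomes $m'mm'^{-1}\in\overline{\orb^L}(U\cap M)$ and is independent of $v_0$. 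Since $V\lhd Q$, the $V$-factor of $P=(P\cap M)V$ acts simply transitively on the $v_0$-coordinate, so the $P$-quotient collapses to $(P\cap M)\backslash\{m'\in M:m'mm'^{-1}\in\overline{\orb^L}(U\cap M)\}$; the substitution $m''=m'^{-1}$ identifies this with $(\gamma_L^M)^{-1}(m)$, giving $\deg\rho=\deg\gamma_L^M$.

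The main technical step is the fibre analysis of $\rho$: carefully tracking the left $P=(P\cap M)V$ action on $Q=MV$, checking that the constraint $qzq^{-1}\in\overline{\orb^L}U$ depends only on $\pi_M(q)$, and verifying that the $V$-direction is fully absorbed by the $V$-factor of $P$. Everything else is routine bookkeeping combining Lemma \ref{lem_jantzen} with the semidirect structures $P=(P\cap M)V$ and $Q=MV$; once multiplicativity of degrees is in place, the equivalence follows immediately.
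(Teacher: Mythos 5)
Your argument is correct, but it takes a genuinely different route from the paper. The paper proves the proposition purely through the centralizer criterion of Lemma \ref{lem_bir_crit}: it picks a representative $lu_1u_2$ of $\orb^G$ with $lu_1\in\orb^M\cap\orb^L(U\cap M)$, $u_2\in V$, and checks by direct manipulation with the semidirect decomposition $Q=MV$ (plus the single-$P$-orbit statement of Lemma \ref{lem_jantzen}(i)) that $C_G(lu_1u_2)\leq P$ holds exactly when $C_M(lu_1)\leq P\cap M$ and $C_G(lu_1u_2)\leq Q$ hold. You instead factor $\gamma_L^G=\gamma_M^G\circ\rho$ through the intermediate bundle $G\times^Q\overline{\orb^M}V$ and prove the stronger quantitative statement $\deg\gamma_L^G=\deg\gamma_L^M\cdot\deg\gamma_M^G$, identifying a fibre of $\rho$ over a point of the open $G$-orbit with a fibre of $\gamma_L^M$ over a point of $\orb^M$; your fibre analysis (the constraint $qzq^{-1}\in\overline{\orb^L}U$ depending only on the $M$-component of $q$, and the $V$-part of $P=(P\cap M)V$ absorbing the $V$-coordinate) is sound. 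What your approach buys is the multiplicativity of degrees itself, in the spirit of the Borho--MacPherson degree formula the paper invokes in \S\ref{sss_indeparab}, of which the proposition is an immediate corollary; what it costs is some extra routine care that you leave implicit: the inclusion $\overline{\orb^L}(U\cap M)\subseteq\overline{\orb^M}$ follows from the image of $\gamma_L^M$ being $\overline{\orb^M}$ (not really from Lemma \ref{lem_jantzen}(i) as you cite), one should note that $\rho$ is dominant and generically finite (which your fibre computation in fact shows), and the points $1*z$ and $m$ must be certified as generic for the degree count --- this is handled by $G$- (resp. $M$-) equivariance together with the fact that they lie in the open dense orbits provided by Lemma \ref{lem_jantzen}, exactly as the paper does when it computes degrees over $\orb^G$ via \eqref{eq_card}.
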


\begin{proof}
There exist $lu_1 \in \orb^M$ with $l \in \orb^L$ and $u_1 \in U \cap M$.
Similarly, $\orb^G$ has representatives of the form $l u_1 u_2$, where $u_2 \in V$ and $l u_1 \in \orb^M \cap  \orb^L (U \cap M)$.

Suppose that $\gamma_L^G$ is birational, then for $l u_1 u_2 \in \orb^G$ as above, we have $C_G(l u_1 u_2) \leq P \leq Q$, so $\gamma_M^G$ is birational by Lemma \ref{lem_bir_crit}.
We show $C_M(l u_1) \leq P$. Let $m \in C_M(l u_1)$, then $mlu_1u_2m^{-1} =  l u_1  m u_2 m^{-1} \in \overline{\orb}^L U \cap \orb^G = \orb^P_{l u_1 u_2}$,  by Lemma \ref{lem_jantzen}. Hence there exists $p \in P$ such that $pm \in C_G(l u_1 u_2) \leq P$. This implies $m \in P$, i.e. $\gamma_L^M$ is birational by Lemma \ref{lem_bir_crit}.

For the other implication, assume $\gamma_L^M$ and $\gamma_M^G$ are birational.
Let $l u_1 u_2 \in \orb^G$ be as above and let $g \in C_G(l u_1 u_2)$.
We show that $g \in P$.
Since $\gamma_M^G$ is birational, then $g \in C_G(l u_1 u_2) \leq Q$ by Lemma \ref{lem_bir_crit}.
So $g=mv$ with $m \in M$ and $v \in V$.
Then $l u_1 u_2 = (mv) (l u_1 u_2) (mv)^{-1}= (m  l u_1 m^{-1})( m \tilde{u}_2 m^{-1})$, where $\tilde{u}_2 \coloneqq (lu_1)^{-1} v (lu_1)u_2 v^{-1} \in V$.
Now $m  l u_1 m^{-1} \in M$ and $ m \tilde{u}_2 m^{-1} \in V$, since $M$ stabilizes $M$ and $V$, and the semi-direct product decomposition of $Q$ yields $m  l u_1 m^{-1} = lu_1$, i.e. $m \in C_M(lu_1)$.
Since $\gamma_L^M$ is also birational, we have $C_M(l u_1) \leq P$, by Lemma \ref{lem_bir_crit}.
Therefore, $g = mv \in P$, i.e. $C_G(l u_1 u_2) \leq P$ and $\gamma_L^G$ is birational, by Lemma \ref{lem_bir_crit}.
\end{proof}

The analogous result for birational induction of nilpotent orbits in Lie algebras is obtained in {\cite[\S 1.2]{Nami2009}} with different techniques.

\subsubsection{Birational rigidity}

\begin{definition} \label{def_uni_bir_rig}
A unipotent conjugacy class in $G$ is said to be \emph{birationally rigid} if it cannot be induced in a birational way from a unipotent class $\orb^L$ inside a proper Levi $L \lneq G$.
\end{definition}

\begin{remark} \label{rk_bir_rig}
By Remark \ref{rk_ind_isogeny} (iii), the orbit $\orb^{\mathfrak{g}} \in \mathcal{N}/G$ is birationally rigid if and only if $\phi(\orb^{\mathfrak{g}}) \in \mathcal{U}/G$ is birationally rigid, where $\phi$ is Springer's isomorphism.
For $\mathfrak{g}$ simple, the complete list of birationally rigid nilpotent orbits can be deduced from \cite{Nami2009, Fu2010}.
By Remark \ref{rk_ind_isogeny} (iv), the notion of birational rigidity does not depend on the isogeny class of the group. 
For a Levi subgroup $L \leq G$ and $\vartheta \in {\rm Aut}(G)$, a class $\orb^L_u \in \mathcal{U}_L/L$ is birationally rigid in $L$ if and only if $\orb^{\vartheta ( L )}_{\vartheta ( u )}$ is birationally rigid in $\vartheta ( L )$.
\end{remark}

\begin{example}
Every rigid conjugacy class is  birationally rigid. The converse is not true: 
if $G$ is simple of type $\mathsf{C}_3$, let $\orb^G$ be the unipotent class $\orb^G$ relative to the partition $[2,2,1,1]$. This is birationally rigid, by \cite[Remark 1.5.2]{Nami2009}. Nonetheless, $\codim_G \orb^G = \dim L_\Theta = \codim_{L_\Theta} \{1\}$, where $\Theta = \{\alpha_2, \alpha_3\}$. Since $\orb^G \in \mathcal{U}/G$ is uniquely determined by its dimension, we have $\orb^G = \ind_{L_\Theta}^G \{1\}$.
\end{example}

\begin{example} \label{eg_type_a}
If $G$ is simple of type $\mathsf{A}$ and $\orb^G \in \mathcal{U}/G$, then $\orb^G$ is rigid if and only if it is birationally rigid if and only if $\orb^G = \{1\}$. By \cite[Theorem 7.2.3]{CollMcGov} and adopting notation therein, if $\orb^G \in \mathcal{U}/G$ corresponds to a partition $\mathbf{p}$ of $n$, then $\orb^G = \ind_L^G \{1\}$, where $L$ corresponds to the  dual partition $\mathbf{p^t}$. The induction is birational, by Lemma \ref{lem_suffbir}. 
Indeed, if $\overline{G}$ denotes the adjoint group in the isogeny class of $G$, we have $C_{\overline{G}}(\nu)$ connected for all $\nu \in \mathcal{N}$, see \cite[Corollary 6.1.6]{CollMcGov}.
\end{example}

\section{Uniqueness of birational induction} \label{s_unique}
In this section we establish an explicit bijection between conjugacy classes in $G$ and a set of data which are ``minimal'' with respect to induction. This will be central in the proof of Theorem \ref{thm_partition}, one of our main results.

\begin{definition}
A pair of \emph{unipotent birational induction data} is $(L, \orb^L)$ where $L \leq G$ is a Levi subgroup, $\orb^L \in \mathcal{U}_L/L$ is birationally rigid and $\mathrm{Ind}_L^G \orb^L$ is birationally induced from $(L, \orb^L)$. We denote by $\mathscr{B}(G)_u$ the set of all unipotent birational data of $G$.
\end{definition}

Notice that $G$ acts on $\mathscr{B}(G)_u$  by simultaneous conjugacy on the pairs and that $\mathscr{B}(G)_u/G$ is finite.
We are going to adapt \cite[Corollary 4.6]{Lo2016} to the case of the conjugacy action of a group on itself.

\begin{lemma} \label{lem_Losev} Let $G$ be reductive and let $\orb^G \in \mathcal{U}/G$. Then there exists, up to $G$-conjugacy, a unique pair $(L, \orb^L) \in \mathscr{B}(G)_u$ such that $\orb^G = \ind_L^G \orb^L$.

In other words, the map
\begin{align*}
\mathscr{B}(G)_u/G &\to \mathcal{U}/G \\
[(L, \orb^L)]_{\sim} &\mapsto \mathrm{Ind}_L^G \orb^L
\end{align*}
is bijective, and all inductions are birational.
\end{lemma}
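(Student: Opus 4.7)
The plan is to transfer the statement to nilpotent orbits in $\mathfrak{g}$ via Springer's isomorphism $\phi$ and then appeal to \cite[Corollary 4.6]{Lo2016}. By Remark \ref{rk_ind_isogeny}(iii)--(iv), a Levi subgroup $L$ and a class $\orb^L \in \mathcal{U}_L/L$ form a birational induction datum for $\orb^G$ if and only if $(\mathfrak{l}, \phi_L^{-1}(\orb^L))$ is a birational induction datum for $\phi^{-1}(\orb^G)$ in the Lie algebra sense; moreover, by Remark \ref{rk_bir_rig}, $\orb^L$ is birationally rigid in $L$ if and only if $\phi_L^{-1}(\orb^L)$ is birationally rigid in $\mathfrak{l}$. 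Thus Springer's isomorphism identifies $\mathscr{B}(G)_u/G$ with its nilpotent analogue and intertwines the corresponding induction maps, so it suffices to prove the statement for nilpotent orbits in $\mathfrak{g}$.

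For existence, I would proceed by induction on $\dim G - \dim L$. If $\orb^G$ is birationally rigid, take $(G, \orb^G)$. Otherwise, pick any proper Levi $M \lneq G$ and a class $\orb^M \in \mathcal{U}_M/M$ from which $\orb^G$ is birationally induced; the inductive hypothesis applied inside $M$ produces a birationally rigid $(L, \orb^L) \in \mathscr{B}(M)_u$ with $\orb^M$ birationally induced from $(L, \orb^L)$ in $M$, and transitivity of birational induction (Proposition \ref{prop_trans}) then yields $\orb^G = \ind_L^G \orb^L$ birationally. For uniqueness, suppose that $(L_1, \orb^{L_1}), (L_2, \orb^{L_2}) \in \mathscr{B}(G)_u$ both birationally induce $\orb^G$. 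Transferring through $\phi$ gives two birationally rigid nilpotent pairs $(\mathfrak{l}_i, \orb^{\mathfrak{l}_i})$ birationally inducing the same nilpotent orbit $\phi^{-1}(\orb^G)$. \cite[Corollary 4.6]{Lo2016} then provides $g \in G$ with $\Ad(g)\mathfrak{l}_1 = \mathfrak{l}_2$ and $\Ad(g)\orb^{\mathfrak{l}_1} = \orb^{\mathfrak{l}_2}$; since a Levi subgroup is determined by its Lie algebra, this forces $gL_1g^{-1} = L_2$, and the $G$-equivariance of $\phi$ gives $g\orb^{L_1}g^{-1} = \orb^{L_2}$.

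The main obstacle is essentially hidden in \cite[Corollary 4.6]{Lo2016}, whose proof in the Lie algebra setting relies on the deformation theory of conical symplectic singularities. On the group side the remaining work is the bookkeeping needed to align the Springer correspondence with the structure of birational induction data, which is already packaged in Remarks \ref{rk_ind_isogeny} and \ref{rk_bir_rig} together with Proposition \ref{prop_trans}.
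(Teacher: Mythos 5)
Your proposal is correct and follows essentially the same route as the paper: transfer the problem to nilpotent orbits via Springer's isomorphism (Remarks \ref{rk_ind_isogeny} and \ref{rk_bir_rig}) and invoke \cite[Corollary 4.6]{Lo2016}. The only differences are minor: the paper first reduces to $G$ semisimple (since $\mathcal{U} = \mathcal{U}_{[G,G]}$) and then to each simple factor, checking compatibility of birational induction and birational rigidity with the product decomposition, before citing \cite[Corollary 4.6 (i)]{Lo2016} --- a reduction you should keep since Losev's statement is not formulated for reductive algebras --- and your separate inductive existence argument via Proposition \ref{prop_trans}, while valid, is not needed because Losev's corollary already yields both existence and uniqueness.
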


\begin{proof}
For $G$ reductive, $\mathcal{U} = \mathcal{U}_{[G,G]}$, hence we may assume $G$ semisimple.
Let $G = G_1 \dots G_d$ be the decomposition into simple factors, $d \in \N$.
The decomposition of $G$ carries over to Levi subgroups, parabolic subgroups and unipotent conjugacy classes. For $L \leq G$ a Levi subgroup and $\orb^L \in \mathcal{U}_L/L$, we write $L = \prod_{i =1}^d L_i$ with $L_i$ Levi subgroup of $G_i$ and $\orb^L = \prod_{i =1}^d \orb^{L_i}$ with $\orb^{L_i} \in \mathcal{U}_{L_i}/L_i$.
Remark that Lusztig-Spaltenstein induction in $G$ is compatible with this decomposition.
If $u \in \mathcal{U}$, then $u = u_1 \dots u_d$ with $u_i$ uniquely determined and unipotent in $G_i$ for all $i = 1, \dots, d$ and $C_{G}(u) = \prod_{i=1}^d C_{G_i} (u_i)$. 
The induction $\ind_L^{G} \orb^L$ is birational if and only if all inductions $\ind_{L_i}^{G_i} \orb^{L_i}$ are so.
Also, $\orb^L$ is birationally rigid in $L$ if and only if each $\orb^{L_i}$ is birationally rigid in $L_i$.
Thus, we are reduced to proving the statement for each simple factor $G_i$. This follows from \cite[Corollary 4.6 (i)]{Lo2016} and Springer's isomorphism (Remarks \ref{rk_ind_isogeny}, \ref{rk_bir_rig}).
\end{proof}

In \cite{Lo2016}, Losev described an explicit bijective correspondence between $\mathfrak{g}/G$ and $G$-equivalence classes of birationally minimal induction data, i.e. triples $( \liel,  \zeta, \orb^\mathfrak{l})$ where $\liel$ is a Levi subalgebra of $\lieg$, the orbit $\orb^\mathfrak{l} \in \mathcal{N}_\mathfrak{l}/L$ is birationally rigid and $\zeta \in \mathfrak{z}(\liel)$ is such that the induction $\ind_\liel^\lieg (\zeta + \orb^\mathfrak{l})$ is birational. Our aim is to find an analogue result in the case of the conjugacy action of $G$ on itself. 

\begin{definition} \label{defn_bid}
A triple of \emph{ weakly  birational induction data} is $(M, s, \orb^M)$ where:
$M \leq G$ is a pseudo-Levi subgroup, $s \in Z(M)$ is such that $Z(M)^\circ s$ satisfies \eqref{RP}, $\orb^M \in \mathcal{U}_M/M$ is birationally rigid and $\ind_M^{C_G(s)^\circ} \orb^M$ is birationally induced from $(M, \orb^M)$.
We denote by $\mathscr{B}(G)$  the set of weakly birational induction data of $G$.
\end{definition}

$G$ acts on $\mathscr{B}(G)$ by simultaneous conjugacy on the triples.

\begin{remark} \label{rk_envelope}
Let $(M, s, \orb^M) \in \mathscr{B}(G)$.
Let $L(M) \coloneqq C_G(Z(M)^\circ)$ be the \emph{Levi envelope} of $M$, i.e. the smallest Levi subgroup of $G$ containing $M$ (see \cite[Definition 3.7]{CE1}).
Remark that $C_{L(M)}(s)^\circ = C_G(s)^\circ \cap L(M) = M$, see the proof of Lemma \ref{lem_itsalevi}.
Then we have $\ind_{L(M)}^G (L(M) \cdot (s \orb^M)) = G \cdot (s \ind_M^{C_G(s)^\circ} \orb^M) \eqqcolon \orb^G$ and 
$\orb^G$ is weakly birationally induced  from $(L(M), L(M) \cdot (s \orb^M))$ in the sense of Definition \ref{defn_birind} (b).
For this reason, we will say that  $G \cdot (s \ind_M^{C_G(s)^\circ} \orb^M)$ is weakly birationally induced from $(M, s, \orb^M)$.

When $G$ is semisimple simply connected, we will omit the adverb ``weakly'', i.e. we will say that $\mathscr{B}(G)$ is the set of \emph{birational induction data} of $G$ and that $G \cdot (s \ind_M^{C_G(s)^\circ} \orb^M)$ is birationally induced from $(M, s, \orb^M) \in \mathscr{B}(G)$: this choice agrees with Remark \ref{rk_coincide}.
\end{remark}

Now we prove that every conjugacy class is weakly birationally induced in a unique way from a triple of birational induction data, up to conjugacy.

\begin{theorem} \label{thm_unibirind}
The following map is bijective:
\begin{align*}
\mathscr{B}(G)/G &\to G/G\\
[(M,s, \orb^M)]_{\sim} &\mapsto G \cdot ( s \ind_M^{C_G(s)^\circ} \orb^M ).
\end{align*}

In particular, for $G$ semisimple simply connected, every conjugacy class is birationally induced in a unique way from a triple of birational induction data. 
\end{theorem}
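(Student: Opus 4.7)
The plan is to prove surjectivity and injectivity by reducing to Lemma \ref{lem_Losev} applied to the connected reductive group $C_G(s)^\circ$, where $s$ plays the role of the semisimple part in Definition \ref{defn_bid}.

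For surjectivity, take $\orb^G_g \in G/G$ with Jordan decomposition $g = sv$. Since unipotent elements of any linear algebraic group lie in its identity component, $v \in C_G(s)^\circ$, which is itself connected reductive. Lemma \ref{lem_Losev} applied to $C_G(s)^\circ$ produces $(M, \orb^M) \in \mathscr{B}(C_G(s)^\circ)_u$ with $\orb^{C_G(s)^\circ}_v = \ind_M^{C_G(s)^\circ} \orb^M$, an induction which is automatically birational. As a Levi subgroup of the pseudo-Levi $C_G(s)^\circ$, the group $M$ is itself a pseudo-Levi of $G$; moreover $s \in Z(C_G(s)^\circ) \subseteq Z(M)$, and by Lemma \ref{lem_itsalevi} the coset $Z(M)^\circ s$ satisfies \eqref{RP} for $M$. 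Hence $(M, s, \orb^M) \in \mathscr{B}(G)$, and its image under the map is precisely $\orb^G_g$.

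For injectivity, suppose triples $(M_i, s_i, \orb^{M_i})$, $i = 1, 2$, both induce the same $G$-class. Since semisimple parts of representatives are $G$-conjugate, after $G$-conjugating the first triple I may assume $s_1 = s_2 =: s$. Setting $v_i := \ind_{M_i}^{C_G(s)^\circ} \orb^{M_i}$, viewed as a $C_G(s)^\circ$-class of unipotent elements, the equality $G \cdot (sv_1) = G \cdot (sv_2)$ combined with uniqueness of Jordan decomposition forces the existence of $h \in C_G(s)$ taking the class $v_1$ to the class $v_2$. After replacing the first triple by its $h$-conjugate (a legitimate $G$-equivalence), I reduce to the case in which both pairs $(M_i, \orb^{M_i})$ belong to $\mathscr{B}(C_G(s)^\circ)_u$ and induce the same class $v$. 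The uniqueness clause of Lemma \ref{lem_Losev} applied to $C_G(s)^\circ$ then yields $k \in C_G(s)^\circ$ conjugating $(M_1, \orb^{M_1})$ onto $(M_2, \orb^{M_2})$; since $k$ centralises $s$, the element $kh$ implements the desired $G$-equivalence of triples.

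The step I expect to be most delicate is the passage from $G$-conjugacy of $sv_1$ and $sv_2$ to conjugacy of the unipotent parts $v_1, v_2$ inside $C_G(s)^\circ$: the conjugator $h$ lies in $C_G(s)$ but need not lie in $C_G(s)^\circ$ when $G$ is not semisimple simply connected. This is precisely the phenomenon that motivates the ``weakly'' qualifier in Definition \ref{defn_bid} and explains why the intermediate $h$-conjugation cannot be avoided in general. The ``In particular'' statement for $G$ semisimple simply connected is then immediate from Remark \ref{rk_coincide}, as in that case $C_G(s) = C_G(s)^\circ$ so weak birational induction coincides with birational induction.
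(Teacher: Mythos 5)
Your proof is correct and follows essentially the same route as the paper: surjectivity via Lemma \ref{lem_Losev} applied to $C_G(s)^\circ$ together with Lemma \ref{lem_itsalevi}, and injectivity by normalising $s_1=s_2$, extracting a conjugator $h\in C_G(s)$ from the Jordan decomposition, transporting the first pair by the (possibly outer) automorphism of $C_G(s)^\circ$ induced by $h$ --- which preserves birational induction and birational rigidity by Remarks \ref{rk_conj_data_bir} and \ref{rk_bir_rig} --- and then invoking the uniqueness in Lemma \ref{lem_Losev}. The only cosmetic difference is that the paper first writes the conjugator as $w^{-1}h$ with $w\in N_G(T)\cap C_G(s)$ and $h\in C_G(s)^\circ$, a normalisation your argument does not need.
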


\begin{proof}
We prove surjectivity.
Let $\orb^G = \orb^G_{su}$ with $s \in T$, $u \in K \coloneqq C_G(s)^\circ$.
By Lemma \ref{lem_Losev}, there exists, up to conjugacy in $K$, a unique $( L, \orb^L) \in \mathscr{B}(K)_u$ such that $\orb^{K}_u = \ind_L^{K} ( \orb^L)$. We can assume $T \leq L$ so that   $L$ is a Levi subgroup of $K$ with $Z(K) \leq Z(L)$. Hence, $s \in Z(L)$ and  $Z(L)^\circ s$ satisfies \eqref{RP} for $L$, by Lemma \ref{lem_itsalevi}. In particular, $L = C_G(Z(L)^\circ s)^\circ$ is a pseudo-Levi of $G$ and $(L, s, \orb^L) \in \mathscr{B}(G)$ satisfies $G \cdot (s \ind_L^{C_G(s)^\circ} \orb^L) = \orb^G$.

Now we prove injectivity.
Let $(M_1, s_1, \orb^{M_1}_{u_1}), (M_2, s_2, \orb^{M_2}_{u_2}) \in \mathscr{B}(G)$ and suppose that 
\begin{equation} \label{ind_b}
G \cdot ( s_1 \ind_{M_1}^{C_G(s_1)^\circ} \orb^{M_1}_{u_1} )= G \cdot ( s_2 \ind_{M_2}^{C_G(s_2)^\circ} \orb^{M_2}_{u_2} )
\end{equation}
where the unipotent classes $\ind_{M_i}^{C_G(s_i)^\circ} \orb^{M_i}_{u_i}$ are birationally induced from $(M_i, \orb^{M_i}_{u_i})$ for $i = 1, 2$.
We can assume that $s_1 = s_2 \eqqcolon s \in T$ and set $K \coloneqq C_G(s)^\circ$.
We have that \eqref{ind_b} is equivalent to
\begin{equation*}
C_G(s) \cdot (\mathrm{Ind}_{M_1}^{K} \orb^{M_1}_{u_1}) = C_G(s) \cdot (\mathrm{Ind}_{M_2}^{K} \orb^{M_2}_{u_2}).
\end{equation*}
If $v \in \ind_{M_1}^M \orb^{M_1}_{u_1}$, then there exists $g \in C_G(s) $ such that $gvg^{-1} \in \ind_{M_2}^{K} \orb^{M_2}_{u_2}$.
Write $g = w^{-1}h$ for suitable $h \in M$ and $w \in N_G (T) \cap C_G(s)$, and up to choosing $hvh^{-1}$ instead of $v$ as a representative, we can assume that $g = w^{-1} \in N_G \left(T \right) \cap C_G(s) $.
Therefore, we have:
$$v \in w (\ind_{M_2}^{K} \orb^{M_2}_{u_2}) w^{-1} = \ind_{w M_2 w^{-1}}^{w K w^{-1}} \orb^{w M_2 w^{-1}}_{w{u_2} w^{-1}}.$$
Since $w$ acts as an automorphism of $K$, the induction is birational (Remark \ref{rk_conj_data_bir}) and $\orb^{wM_2 w^{-1}}_{w  u_2 w^{-1}}$ is birationally rigid (Remark \ref{rk_bir_rig}), it follows that
$$ v \in \ind_{M_1}^{K} \orb^{M_1}_{u_1} \cap \ind_{w M_2 w^{-1}}^{K} \orb^{w M_2 w^{-1}}_{w u_2 w^{-1}}.$$
By Lemma \ref{lem_Losev}, the pairs $ ( M_1, \orb^{M_1}_{u_1})$ and
$ ( w M_2 w^{-1}, \orb^{w M_2 w^{-1}}_{w u_2 w^{-1}})$ 
are conjugate in $K$, consequently  $ (M_1, s_1, \orb^{M_1}_{u_1}) $ and 
  $ (M_2, s_2, \orb^{M_2}_{u_2})$ are conjugate in $G$ via $g'w^{-1}$ for some $g' \in K$.
  
The last statement is a consequence of the proof together with Remark \ref{rk_coincide}. 
\end{proof}

\section{Jordan classes and birational sheets} \label{s_jcbs}
We recall the notions of Jordan classes in a reductive group, introduced in \cite{LusztigICC} and we collect some results on sheets from \cite[§4]{CE1}. After that, we define birational closures of Jordan classes and birational sheets of a semisimple simply connected group $G$.

\begin{definition}
The set of \emph{decomposition data} of $G$ is the set of all triples $\tau = (M, Z(M)^\circ z, \orb^M)$ where $M \leq G$ is a pseudo-Levi subgroup, $Z(M)^\circ z$ is a connected component of $Z(M)$ satisfying \eqref{RP} for $M$ and $\orb^M \in \mathcal{U}_M/M$. We denote by $\mathscr{D}(G)$ the set of decomposition data of $G$.
\end{definition}

The group $G$ acts on $\mathscr{D}(G)$ by simultaneous conjugacy on the triples.
We associate to any $su \in G$ its decomposition data $( C_G(s)^\circ, Z(C_G(s)^\circ)^\circ s, \orb^{C_G(s)^\circ}_u )$.

\begin{definition}
Two elements $su, rv \in G$ are \emph{Jordan equivalent} if their decomposition data are conjugate in $G$.
We denote with $J (su)$  the \emph{Jordan class}  of $su$, i.e. the equivalence class of all elements of $G$ which are Jordan equivalent to $su$.
\end{definition}

We have
$J ( su ) = G \cdot (( Z( C_G(s)^\circ )^\circ s )^{reg} \orb^{C_G(s)^\circ}_u )$. We denote by $\mathscr{J} (G)$ the set of Jordan classes in $G$.
The group $G$ is partitioned into finitely many Jordan classes, which are in one-to-one correspondence with $\mathscr{D}(G)/G$.
If $\tau = (M, Z(M)^\circ z, \orb^M) \in \mathscr{D}(G)$, then $J(\tau) \coloneqq G \cdot ((Z(M)^\circ z)^{reg} \orb^M)$.
Jordan classes are smooth irreducible locally closed subvarieties of $G$, they are unions  of equidimensional conjugacy classes, this is stated in \cite[\S 3.1]{LusztigICC}, see also \cite[Corollary 3.8.1]{Broer_Lectures} and \cite[Proposition 2.3]{broer_1998}.
The closure of a Jordan class is a union of Jordan classes and the same holds for its regular part.
If $\tau = (M, Z(M)^\circ z, \orb^M) \in \mathscr{D}(G)$, we have:
\begin{align*}
\overline{J(\tau)} = \bigcup_{t \in Z(M)^\circ z} \overline{G \cdot (t \ind_M^{C_G(t)^\circ} \orb^M)}, \qquad 
\overline{J(\tau)}^{reg} = \bigcup_{t \in Z(M)^\circ z} G \cdot (t \ind_M^{C_G(t)^\circ} \orb^M).
\end{align*}
Since $\overline{J(\tau)}^{reg}$ is open in $\overline{J(\tau)}$, it is irreducible and locally closed in $G$.

The sheets for the action of $G$ on itself under conjugation are unions of Jordan classes. For each sheet $S$ there exists a unique Jordan class $J \in \mathscr{J}(G)$ such that $J$ is dense in $S$ and $S = \overline{J}^{reg}$. A Jordan class $J$ is dense in a sheet if and only if $J = J(\tau)$ where $\tau = (M, Z(M)^\circ z, \orb^M) \in \mathscr{D}(G)$ where $\orb^M \in \mathcal{U}_M/M$ is rigid, see \cite[Theorem 5.6 (a)]{CE1}.

\begin{remark} \label{rk_deco_class}
Jordan (or decomposition) classes and sheets were also defined for the adjoint action of $G$ on $\lieg$. Decomposition classes in $\lieg$ are parametrized by $G$-conjugacy classes of pairs $(\mathfrak{l}, \orb^\mathfrak{l})$ where $\mathfrak{l}$ is a Levi subalgebra of $\mathfrak{g}$ and $\orb^\mathfrak{l} \in \mathcal{N}_\mathfrak{l}/L$. If $(\mathfrak{l}, \orb^\mathfrak{l})$ is such a pair, it parametrizes the Jordan class \begin{equation} \label{eq_deco_class}
\mathfrak{J}(\mathfrak{l}, \orb^\mathfrak{l}) \coloneqq {\rm Ad}(G)(\mathfrak{z(l)}^{reg} + \orb^\mathfrak{l}).
\end{equation}
Sheets are regular closures of Jordan classes $\overline{\mathfrak{J}(\mathfrak{l}, \orb^\mathfrak{l})}^{reg}$ with $\orb^\mathfrak{l}$ a rigid nilpotent orbit in $\mathfrak{l}$.
Definitions and results can be found in \cite{BK79, Borho, broer_1998, Broer_Lectures}.
\end{remark}

There exist sheets in simple Lie algebras which intersect non-trivially, see \cite[§6.6]{BK79} and \cite[§7.4]{Borho}. Sheets are disjoint in simple Lie algebras of type $\mathsf{A}$, \cite{Dix}.
For $\mathfrak{g}$ simple of classical type all sheets are smooth, see \cite{imhof}; if $\mathfrak{g}$ is simple exceptional there exist singular sheets, see \cite{bulois} for the list of smooth ones.
Similarly, in the case of a simple group $G$, there exist non-smooth sheets and distinct sheets with non-empty intersection. If $G = {\rm SL}_n(\C)$, all sheets are smooth, see \cite[§6.3]{ACE}.

\subsection{Preliminary constructions} \label{ss_noname}
Let $\mathfrak{l}$ be a Levi subalgebra of $\mathfrak{g}$ and let $\orb^{\mathfrak{l}} \in \mathcal{N}_\mathfrak{l}/L$. In \cite[§4]{Lo2016}, for any such pair $(\mathfrak{l}, \orb^{\mathfrak{l}})$ Losev defines the set 
$\bir(\mathfrak{z(l)}, \orb^{\mathfrak{l}}) = \{ \zeta \in \mathfrak{z(l)} \mid \ind_\mathfrak{l}^{\mathfrak{g}} ( \zeta + \orb^{\mathfrak{l}}) \mbox{ is birational}\} = \{ \zeta \in \mathfrak{z(l)} \mid \ind_\mathfrak{l}^{\mathfrak{c_g}(\zeta)} \orb^{\mathfrak{l}} \mbox{ is birational}\}$, by Remark \ref{rk_lie}. 
In particular, $\bir(\mathfrak{z(l)}, \orb^{\mathfrak{l}}) $ only depends on the pair $(\mathfrak{l}, \orb^\mathfrak{l})$ and not on the parabolic subgroup chosen for the generalized Springer map.
We would like to define a similar object for the group case, but Lemma \ref{lem_gsp_conn} and Remarks \ref{rk_coincide} and \ref{rk_envelope} suggest two distinct approaches.

Let $ \tau \coloneqq (M, Z(M)^\circ s, \orb^M) \in \mathscr{D}(G)$ and let $L(M)$ be the Levi envelope of $M$ in $G$. 
To $\tau \in \mathscr{D}(G)$, we associate the two sets:
\begin{align} 
\bir ( Z(M)^\circ s , \orb^M ) &\coloneqq \{ z \in Z(M)^\circ s \mid  \ind_{L(M)}^G (L(M) \cdot (z\orb^M)) \mbox{ is birationally induced}\}; \label{eq_centre} \\
\wbir( Z(M)^\circ s , \orb^M ) &\coloneqq \{ z \in Z(M)^\circ s \mid  \ind_{M}^{C_G(z)^\circ} \orb^M \mbox{ is birationally induced}\}= \label{eq_wcentre}\\
& =\{ z \in Z(M)^\circ s \mid  \ind_{L(M)}^{G} (L(M) \cdot (z\orb^M)) \mbox{ is weakly birationally induced}\}. \notag
\end{align} 

\begin{remark} \label{rk_centre}
The definition in \eqref{eq_wcentre} does not depend on the choice of a parabolic subgroup, thanks to the arguments in \S \ref{sss_indeparab}.
Moreover, if $z \in (Z(M)^\circ s)^{reg}$, then $C_G(z)^\circ = M$, hence $\varnothing \neq (Z^\circ s)^{reg} \subset \wbir ( Z(M)^\circ s , \orb^M )$.
By Remarks \ref{rk_coincide} and \ref{rk_envelope}, we have $\bir ( Z(M)^\circ s , \orb^M ) \subset \wbir( Z(M)^\circ s , \orb^M )$.
As a consequence of Lemma \ref{lem_gsp_conn}, when $G$ is semisimple simply connected, $\bir ( Z(M)^\circ s , \orb^M ) = \wbir( Z(M)^\circ s , \orb^M )$; on the other hand, when $G$ is not simply connected, the inclusion can be proper as the following example shows.
\end{remark}

\begin{example} \label{eg_psl2}
Consider $G = \mathrm{SL}_2(\C)$, let $\overline{G} = \mathrm{PSL}_2(\C)$ and let $\pi \colon G \to \overline{G}$, $\pi(g) = \bar{g}$ be the central isogeny.
Let $T \leq G$ be the torus consisting of diagonal matrices, let $B \leq G$ be the Borel subgroup individuated by the upper triangular ones, let $U$ be the unipotent radical of $B$ and let $\overline{B} = \pi(B)$ and $\overline{U} = \pi(U)$.
Let $\tau = (\overline{T}, \overline{T}, \{ \bar{e}\}) \in \mathscr{D}(\overline{G})$ and let $s \coloneqq {\rm diag}[i, -i]$.
Then one can verify that $\bar{s} \in \wbir(\overline{T}, \{ \bar{e}\}) \setminus \bir(\overline{T}, \{ \bar{e}\})$. 
The
 generalized Springer map \eqref{gsp} reads:
 \begin{equation*}
 \gamma \colon \overline{G} \times^{\overline{B}} (\bar{s} \overline{U}) \to \overline{G} \cdot (\bar{s} \overline{U}) = \orb^{\overline{G}}_{\bar{s}}.
 \end{equation*}
This map is not birational, because $C_{\overline{G}} (\bar{s}) =  N_{\overline{G}}(\overline{T}) \lneq \overline{B}$, namely $\gamma$ is a $2:1$ covering; on the other hand, $\gamma_{\bar{s}}$ as in \eqref{ugsp} is trivially birational.
\end{example}

We now describe the structure of the set $\wbir(Z(M)^\circ s, \orb^M)$ for a pseudo-Levi $M \leq G$ and $\orb^M \in \mathcal{U}_M/M$.
Thanks to Remark \ref{rk_centre}, when $G$ is semisimple simply connected, the results will give a description of the set  $\bir(Z(M)^\circ s, \orb^M)$.

Recall that an element $su \in G$ is \emph{isolated} if $C_G(Z(C_G(s)^\circ)^\circ) = G$, see \cite[Definition 2.6]{LusztigICC}.

\begin{remark}\label{rk_cosettori_centr}
Let $M \leq G$ be a pseudo-Levi subgroup, set $Z \coloneqq Z(M)$ and let $s \in Z$ such that $Z^\circ s$ satisfies \eqref{RP} for $M$.
Define the set $$\mathcal{Z}(Z(M)^\circ s) \coloneqq \{ Z(C_G(z)^\circ)^\circ z \mid z \in Z^\circ s \}.$$
We prove that this set is finite.
We can assume $T \leq M$, so that $z \in Z(M) \leq T$ hence $C_G(z)^\circ = \langle T, U_{\pm \alpha} \mid \alpha \in \Phi_z \rangle$ with finitely many possibilities for the root subsystem $\Phi_z = \{\alpha \in \Phi \mid \alpha(z) = 1 \rangle \subset \Phi$, moreover the connected components of $Z(C_G(z)^\circ)$ are finitely many.

We write $\mathcal{Z}(Z(M)^\circ s) = \{ Z(M_i)^\circ s_i \mid i \in I\}$ for a finite index set $I$ and suitable $s_i \in Z^\circ s$ with $C_G(s_i)^\circ = M_i$.
Thanks to Lemma \ref{lem_itsalevi}, we can define a map:
\begin{align*}
\mathcal{Z}(Z(M)^\circ s) &\to \{ M_i \leq G \mid M \mbox{ is a Levi subgroup of } M_i \} \\
Z(M_i)^\circ s_i  & \mapsto C_G(Z(M_i)^\circ s_i)^\circ = M_i.
\end{align*}
The set $\mathcal{Z}(Z(M)^\circ s) $ is partially ordered by inclusion: if $T \leq M_1 \leq M_2$, with $M_1, M_2$ pseudo-Levi subgroups of $G$,
then $Z(M_2) \leq Z(M_1)$ and $Z(M_2)^\circ z \subset Z(M_1)^\circ z$ for all $z \in Z(M_2)$.
The maximum of $\mathcal{Z}(Z(M)^\circ s)$ is $Z(M)^\circ s$ and its minimal elements are connected components $Z(M_i)^\circ s_i$ consisting of isolated elements.
The above map reverses inclusions: $Z(M_i)^\circ s_i \subset Z(M_j)^\circ s_j$ implies $M_j \leq M_i$.
\end{remark}

\begin{lemma} \label{lem_bir_strata}
Let $(M, Z(M)^\circ s, \orb^M) \in \mathscr{D}(G)$, set $Z \coloneqq Z(M)$.
Let $z \in Z^\circ s$.
\begin{enumerate}[noitemsep, nolistsep]
\item[(i)] If $\ind_M^{C_G(z)^\circ} \orb^M$ is birationally induced from $(M, \orb^M$), then $\ind_M^{C_G(z')^\circ} \orb^M$ is birationally induced from $(M, \orb^M$) for all $z' \in Z^\circ s$ such that $C_G(z')^\circ \leq C_G(z)^\circ$.
\item[(ii)] If $\ind_M^{C_G(z)^\circ} \orb^M$ is not birationally induced from $(M, \orb^M)$, then $\ind_M^{C_G(z')^\circ} \orb^M$ is not birationally induced from $(M, \orb^M)$ for all $z' \in Z^\circ s$ such that $C_G(z')^\circ \geq C_G(z)^\circ$.
\end{enumerate}
\end{lemma}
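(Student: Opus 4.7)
Part (ii) follows from part (i) by swapping the roles of $z$ and $z'$: if $\ind_M^{C_G(z')^\circ} \orb^M$ were birationally induced while $C_G(z')^\circ \geq C_G(z)^\circ$, then (i) applied with $z$ and $z'$ exchanged would force $\ind_M^{C_G(z)^\circ} \orb^M$ to be birationally induced, contradicting the hypothesis of (ii). So I focus on (i) and set $K \coloneqq C_G(z)^\circ$, $K' \coloneqq C_G(z')^\circ$ with $K' \leq K$; by Lemma \ref{lem_itsalevi} applied at both $z$ and $z'$, $M$ is a Levi subgroup of both $K$ and $K'$.

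The plan is to transfer the birationality statement to the Lie algebra of $K$ via Springer's isomorphism, where centralisers of semisimple elements are always Levi subalgebras, and then to conclude by transitivity of birational induction. Write $\mathfrak{m} \coloneqq \Lie(M)$, $\mathfrak{k} \coloneqq \Lie(K)$, $\mathfrak{k}' \coloneqq \Lie(K')$, and let $\orb^\mathfrak{m}$ be the nilpotent orbit in $\mathfrak{m}$ corresponding to $\orb^M$ under Springer's isomorphism for $M$. By Remark \ref{rk_ind_isogeny}(iii) applied inside the reductive groups $K$ and $K'$, birationality of $\ind_M^K \orb^M$ is equivalent to birationality of $\ind_\mathfrak{m}^\mathfrak{k} \orb^\mathfrak{m}$, and similarly for $K'$.

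Since $r \coloneqq z^{-1}z' \in Z(M)$ is semisimple in $K$ with $K' = C_K(r)^\circ$, the subgroup $K'$ is a pseudo-Levi of $K$; let $L$ denote its Levi envelope inside $K$ and put $\mathfrak{l} \coloneqq \Lie(L)$. Then $\mathfrak{l}$ is a Levi subalgebra of $\mathfrak{k}$, $\mathfrak{k}'$ is a Levi subalgebra of $\mathfrak{l}$ (as $K'$ is by construction a Levi of its own Levi envelope), and $\mathfrak{m} \leq \mathfrak{k}'$ is a Levi subalgebra of $\mathfrak{k}'$, hence of $\mathfrak{l}$ and of $\mathfrak{k}$ as well. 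Transitivity of Lusztig--Spaltenstein induction now yields the factorisation
\[
\ind_\mathfrak{m}^\mathfrak{k} \orb^\mathfrak{m} \;=\; \ind_\mathfrak{l}^\mathfrak{k}\bigl(\ind_{\mathfrak{k}'}^\mathfrak{l}\bigl(\ind_\mathfrak{m}^{\mathfrak{k}'} \orb^\mathfrak{m}\bigr)\bigr),
\]
and by the Lie-algebra analogue of Proposition \ref{prop_trans}, cited from \cite{Nami2009} in the remark following that proposition, birationality of the left-hand side is equivalent to birationality of each of the three factors on the right. In particular $\ind_\mathfrak{m}^{\mathfrak{k}'} \orb^\mathfrak{m}$ is birational, and Remark \ref{rk_ind_isogeny}(iii) applied inside $K'$ translates this back to birationality of $\ind_M^{K'} \orb^M$, as required.

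The conceptual hinge, and the step on which I expect the argument to hang, is that although $K'$ is only a \emph{pseudo}-Levi of $K$ and the group transitivity of Proposition \ref{prop_trans} requires Levi subgroups throughout the chain, at the Lie-algebra level $\mathfrak{k}'$ always embeds as a genuine Levi subalgebra of its Levi envelope $\mathfrak{l} \subseteq \mathfrak{k}$. It is precisely this gap between the group and the Lie-algebra structure theory, bridged by Springer's isomorphism, that lets two applications of Lie-algebra transitivity do what no single application of group-level transitivity can.
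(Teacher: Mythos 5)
Your reduction of (ii) to (i) is correct and agrees with the paper, and so do the first steps of (i): by Lemma \ref{lem_itsalevi}, $M$ is a Levi subgroup of both $K=C_G(z)^\circ$ and $K'=C_G(z')^\circ$, one has $K'=C_K(r)^\circ$ for $r=z^{-1}z'$, and Remark \ref{rk_ind_isogeny} (iii) transfers birationality of the two unipotent inductions to $\mathfrak{k}$ and $\mathfrak{k}'$. The argument breaks exactly at the step you single out as the hinge: the claim that $K'$ is a Levi subgroup of its Levi envelope $L=C_K(Z(K')^\circ)$ is false, indeed essentially circular. A Levi subgroup of $L$ is the centralizer in $L$ of its own connected centre, and $C_L(Z(K')^\circ)=C_K(Z(K')^\circ)\cap L=L$; hence $K'$ is a Levi of $L$ if and only if $K'=L$, i.e. if and only if $K'$ was already a Levi subgroup of $K$. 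In general it is not: take $\tau=(T,T,\{1\})\in\mathscr{D}(G)$ with $G={\rm Sp}_6(\C)$, $z=1$ (so $K=G$) and $z'\in T$ an involution with $C_G(z')$ of type $\mathsf{C}_1\mathsf{C}_2$ (cf. Example \ref{eg_c3}); then $K'\simeq{\rm Sp}_2(\C)\times{\rm Sp}_4(\C)$ is semisimple, so $L=K$, and $\mathfrak{k}'$ is not a Levi subalgebra of $\mathfrak{l}=\mathfrak{k}$. In that situation the middle factor $\ind_{\mathfrak{k}'}^{\mathfrak{l}}$ of your factorisation is not even defined (Lusztig--Spaltenstein induction requires a parabolic with $\mathfrak{k}'$ as Levi factor), so the appeal to transitivity collapses.

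The underlying misconception is that passing to the Lie algebra turns $K'$ into a Levi. Centralizers of semisimple elements \emph{of $\mathfrak{k}$} are Levi subalgebras, but $\mathfrak{k}'=\mathfrak{c}_{\mathfrak{k}}(r)$ is the fixed-point subalgebra of $\Ad(r)$ for a semisimple \emph{group} element, i.e. a pseudo-Levi subalgebra; Springer's isomorphism only matches unipotent classes of $K$ (resp. $K'$) with nilpotent orbits of $\mathfrak{k}$ (resp. $\mathfrak{k}'$), it does not convert $r$ into an element of $\mathfrak{k}$. What \eqref{RP} together with Lemma \ref{lem_itsalevi} buys is that $M$ is a genuine Levi subgroup of every $C_G(z')^\circ$ with $z'\in Z^\circ s$ --- not that $C_G(z')^\circ$ is a Levi of $C_G(z)^\circ$ --- and the paper accordingly deduces (i) from Proposition \ref{prop_trans} applied along such genuine Levi chains, combined with the poset structure of Remark \ref{rk_cosettori_centr}; your route instead needs transitivity across the pseudo-Levi inclusion $K'\leq K$ itself, which neither Proposition \ref{prop_trans} nor its Lie-algebra analogue from \cite{Nami2009} provides. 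To repair the proof you must bridge precisely this pseudo-Levi step rather than define it away.
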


\begin{proof}
(i) follows from Proposition \ref{prop_trans} and Remark \ref{rk_cosettori_centr}.
We prove (ii) by contradiction. We can assume $M = C_G(s)^\circ$ for $s\in T$.
Suppose that there is $z' \in Z(C_G(z)^\circ)^\circ z \cap \wbir(Z^\circ s, \orb^M)$.
Then $C_G(z')^\circ \geq C_G(z)^\circ$ and $\ind_M^{C_G(z')^\circ} \orb^M$ is birationally induced from $(M, \orb^M)$.
This contradicts Proposition \ref{prop_trans}.
\end{proof}

\begin{corollary}  Let $(M, Z(M)^\circ s, \orb^M) \in \mathscr{D}(G)$, set $Z \coloneqq Z(M)$.
If $z \in \wbir(Z^\circ s, \orb^M)$ for all isolated $z \in Z^\circ s$, then $\wbir(Z^\circ s, \orb^M) = Z^\circ s$.
\end{corollary}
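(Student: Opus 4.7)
The plan is to exploit the poset structure on $\mathcal{Z}(Z(M)^\circ s)$ from Remark \ref{rk_cosettori_centr} together with Lemma \ref{lem_bir_strata}(i), reducing an arbitrary $z \in Z^\circ s$ to a minimal (i.e.\ isolated) element lying ``below'' it.

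First I would pick an arbitrary $z \in Z^\circ s$ and look at its component $Z(C_G(z)^\circ)^\circ z$ inside the finite poset $\mathcal{Z}(Z(M)^\circ s)$. Since this poset is finite and non-empty, there is a minimal element sitting below $Z(C_G(z)^\circ)^\circ z$, and by the description in Remark \ref{rk_cosettori_centr} every minimal element has the form $Z(C_G(z')^\circ)^\circ z'$ with $z' \in Z^\circ s$ isolated. Thus I obtain $z' \in Z^\circ s$ isolated with
\[
Z(C_G(z')^\circ)^\circ z' \subset Z(C_G(z)^\circ)^\circ z.
\]

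Next I would invoke the inclusion-reversing correspondence of Remark \ref{rk_cosettori_centr} between $\mathcal{Z}(Z(M)^\circ s)$ and the set of pseudo-Levis containing $M$: the above inclusion translates into
\[
M \leq C_G(z)^\circ \leq C_G(z')^\circ.
\]
By hypothesis $z' \in \wbir(Z^\circ s, \orb^M)$, so $\ind_M^{C_G(z')^\circ}\orb^M$ is birationally induced from $(M, \orb^M)$.

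Finally I would apply Lemma \ref{lem_bir_strata}(i) with the roles of $z$ and $z'$ swapped: since $C_G(z)^\circ \leq C_G(z')^\circ$ and $\ind_M^{C_G(z')^\circ}\orb^M$ is birationally induced from $(M, \orb^M)$, we deduce that $\ind_M^{C_G(z)^\circ}\orb^M$ is birationally induced from $(M, \orb^M)$, i.e.\ $z \in \wbir(Z^\circ s, \orb^M)$. As $z$ was arbitrary, $\wbir(Z^\circ s, \orb^M) = Z^\circ s$.

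There is essentially no obstacle: the whole argument is bookkeeping in the poset $\mathcal{Z}(Z(M)^\circ s)$. The only point one must be careful about is making sure to apply Lemma \ref{lem_bir_strata}(i) in the correct direction (from the larger centralizer of the isolated element down to the smaller centralizer of $z$), which is exactly the direction its statement allows.
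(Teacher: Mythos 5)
Your proof is correct and is essentially the paper's own argument: both rest on the finite poset structure of $\mathcal{Z}(Z^\circ s)$ from Remark \ref{rk_cosettori_centr} (isolated elements being the minimal elements, with the inclusion-reversing passage to centralizers) combined with Lemma \ref{lem_bir_strata}(i). The only difference is presentational: you descend from an arbitrary $z$ to a minimal element below it, while the paper propagates birationality upward from each isolated element, which is the same bookkeeping.
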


\begin{proof}
Let $z \in Z^\circ s$ be isolated, with $\ind_M^{C_G(z)^\circ} \orb^M$ birationally induced from $(M, \orb^M)$. Let $Z(C_G(z)^\circ)^\circ z = Z(M_i)^\circ s_i \in \mathcal{Z}(Z^\circ s, \orb^M)$. Then all $z' \in Z^\circ s$ such that $Z(C_G(z')^\circ)^\circ z'$ is greater or equal to $Z(M_i)^\circ s_i$ in the partial order on $\mathcal{Z}(Z^\circ s)$ satisfy $\ind_M^{C_G(z')^\circ} \orb^M$ is birationally induced from $(M, \orb^M)$. This is a consequence of Lemma \ref{lem_bir_strata} and Remark \ref{rk_cosettori_centr}. Since isolated elements are minimal elements of $\mathcal{Z}(Z^\circ s)$, the statement follows.
\end{proof}

\begin{remark} \label{rk_not_bir}
For $(M, Z(M)^\circ s, \orb^M) \in \mathscr{D}(G)$, set $Z \coloneqq Z(M)$ and define $$\mathcal{Z}'(Z^\circ s, \orb^M) \coloneqq \{Z(M_i)^\circ s_i \in \mathcal{Z}(Z^\circ s) \mid \ind_M^{M_i} \orb^M \mbox{ is not birational} \}.$$
By Remark \ref{rk_cosettori_centr}, this set is finite and by Lemma \ref{lem_bir_strata}, it is a subposet of $\mathcal{Z}(Z^\circ s)$.
Let $\hat{\mathcal{Z}}(Z^\circ s, \orb^M)$ be the subset of maximal elements in $\mathcal{Z}'(Z^\circ s, \orb^M)$.
%Observe that for all components $Z(C_G(z)^\circ)^\circ z \in \mathcal{Z}'(Z(M)^\circ s, \orb^M)$, we have that $M$ is a Levi subgroup in $C_G(z)^\circ$.

Now we prove that the set $\wbir(Z^\circ s, \orb^M)$ is the complement in $Z^\circ s$ of the finite union (possibly empty) of shifted tori which are elements of $\mathcal{Z}'(Z^\circ s, \orb^M)$.
\end{remark}

\begin{proposition} \label{prop_birzl_gp}
Retain the notation from Remark \ref{rk_not_bir}.
Then\footnote{Notice that $\bigcup_{\hat{\mathcal{Z}}(Z^\circ s, \orb^M)} Z(M_i)^\circ s_i =\bigcup_{\mathcal{Z}'(Z^\circ s, \orb^M)} Z(M_i)^\circ s_i$, but the first union has the advantage to be expressed with a minimal number of elements.}
$$Z^\circ s \setminus \wbir(Z^\circ s, \orb^M) = 
\bigcup_{\hat{\mathcal{Z}}(Z^\circ s, \orb^M)} Z(M_i)^\circ s_i.$$

In particular, $\wbir(Z^\circ s, \orb^M)$ is open in $Z^\circ s$.
\end{proposition}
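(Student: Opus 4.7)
The plan is to first establish the set-theoretic identity and then read off openness for free. The crucial observation is that membership of $z \in Z^\circ s$ in $\wbir(Z^\circ s, \orb^M)$ depends only on $C_G(z)^\circ$, while by Lemma \ref{lem_bir_strata}(ii), non-birationality at one ``regular'' point of a coset in $\mathcal{Z}(Z^\circ s)$ actually propagates to the whole coset (not merely to its regular stratum). This is what turns a statement about individual centralisers into a statement about entire shifted tori.

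First I would prove the containment $\bigcup_{\hat{\mathcal{Z}}(Z^\circ s, \orb^M)} Z(M_i)^\circ s_i \subseteq Z^\circ s \setminus \wbir(Z^\circ s, \orb^M)$. Fix $Z(M_i)^\circ s_i \in \hat{\mathcal{Z}}(Z^\circ s, \orb^M) \subset \mathcal{Z}'(Z^\circ s, \orb^M)$; by construction $M_i = C_G(s_i)^\circ$ and $\ind_M^{M_i} \orb^M$ is not birationally induced. For any $z' \in Z(M_i)^\circ s_i$, the subgroup $M_i$ centralises $z'$, so $C_G(z')^\circ \supseteq M_i = C_G(s_i)^\circ$, and Lemma \ref{lem_bir_strata}(ii) applied with $z = s_i$ gives at once that $\ind_M^{C_G(z')^\circ} \orb^M$ is not birationally induced, i.e.\ $z' \notin \wbir(Z^\circ s, \orb^M)$.

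For the reverse inclusion, given $z \in Z^\circ s \setminus \wbir(Z^\circ s, \orb^M)$, the coset $Z(C_G(z)^\circ)^\circ z$ lies in $\mathcal{Z}'(Z^\circ s, \orb^M)$ by definition and contains $z$. Since $\mathcal{Z}(Z^\circ s)$ is finite by Remark \ref{rk_cosettori_centr}, so is $\mathcal{Z}'(Z^\circ s, \orb^M)$, and in a finite poset every element is bounded above by a maximal one: thus $Z(C_G(z)^\circ)^\circ z$ is contained in some element of $\hat{\mathcal{Z}}(Z^\circ s, \orb^M)$, which therefore also contains $z$. This same observation justifies the footnote's remark that the union over $\mathcal{Z}'$ coincides with the union over $\hat{\mathcal{Z}}$.

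The concluding openness statement is then immediate: each $Z(M_i)^\circ s_i$ is a translate in the torus $Z^\circ$ of the closed subtorus $Z(M_i)^\circ \leq Z^\circ$, hence closed in $Z^\circ s$; the complement of a finite union of closed subsets is open. There is no genuine obstacle here --- the substantive input is Lemma \ref{lem_bir_strata}(ii), and the rest is bookkeeping around the finiteness and poset structure of $\mathcal{Z}(Z^\circ s)$.
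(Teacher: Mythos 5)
Your proof is correct and follows essentially the same route as the paper: the easy inclusion via Lemma \ref{lem_bir_strata}(ii) applied to the representative $s_i$ (using $M_i \leq C_G(z')^\circ$ for $z' \in Z(M_i)^\circ s_i$), and the reverse inclusion via finiteness of $\mathcal{Z}(Z^\circ s)$ and bounding $Z(C_G(z)^\circ)^\circ z$ by a maximal element of $\mathcal{Z}'(Z^\circ s, \orb^M)$. Your explicit justification of openness (each $Z(M_i)^\circ s_i$ being a closed coset of a subtorus of $Z^\circ$) is a welcome detail the paper leaves implicit.
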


\begin{proof}
One inclusion is clear, as $\hat{\mathcal{Z}}(Z^\circ s, \orb^M) \subset \mathcal{Z}'(Z^\circ s, \orb^M)$, hence $z \in Z(M_i)^\circ s_i$ implies  that $\ind_M^{C_G(z)^\circ} \orb^M$ is not birationally induced from $(M, \orb^M)$, i.e. $z \in Z^\circ s \setminus \wbir(Z^\circ s, \orb^M)$.

For the other inclusion, let $z \in Z^\circ s \setminus \wbir(Z^\circ s, \orb^M)$.
Then $z$ satisfies the assumptions of Lemma \ref{lem_bir_strata} (b), hence $Z(C_G(z)^\circ)^\circ z \in \mathcal{Z}'(Z^\circ s, \orb^M)$.
By Remark \ref{rk_not_bir} there exists a maximal $Z(M_i)^\circ s_i \in \hat{\mathcal{Z}}(Z^\circ s, \orb^M)$ containing $Z(C_G(z)^\circ)^\circ z$, and this ends the proof.
\end{proof}

\subsection{Birational closures of Jordan classes} \label{ss_bcjc}
In this part, we assume $G$ semisimple simply connected and we apply the results obtained above to define and birational closures of Jordan classes and study their structure.

\begin{definition} \label{defn_birclos}
Let $su \in G$, the \emph{birational closure} of $J(su)$ is:
$$\overline{J(su)}^{bir} \coloneqq \bigcup_{z \in \bir(Z(C_G(s))^\circ s, \orb^{C_G(s)}_u)} G \cdot \left( z \ind_{C_G(s)}^{C_G(z)} \orb^{C_G(s)}_u \right),$$ where $\bir(Z(C_G(s))^\circ s, \orb^{C_G(s)}_u)$ is as in  \eqref{eq_centre}, equivalently as in \eqref{eq_wcentre}.
\end{definition}

\begin{remark}
For $su \in G$, the birational closure $\overline{J(su)}^{bir}$ is $G$-stable by construction.
We have  $J(su) \subset \overline{J(su)}^{bir} \subset \overline{J(su)}^{reg}$, always by construction.
\end{remark}

\begin{lemma} \label{lem_wdbirclos}
Definition \ref{defn_birclos} is independent of the representative of the Jordan class. In particular, for $\tau \in \mathscr{D}(G)$, the set $\overline{J(\tau)}^{bir}$ is well-defined and if $\tau$ and $\tau'$ represent the same class in $\mathscr{D}(G)/G$, then $\overline{J(\tau)}^{bir} = \overline{J(\tau')}^{bir}$.
\end{lemma}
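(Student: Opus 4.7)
The plan is to split the claim into two independent checks, both of which follow from $G$-equivariance of the data in play: (a) any two representatives $su, rv$ of the same Jordan class produce canonical decomposition data that are conjugate in $G$; (b) $G$-conjugate decomposition data yield the same subset of $G$ under the prescription in Definition \ref{defn_birclos}. Once (a) and (b) are done, well-definedness of $\overline{J(\tau)}^{bir}$ on the level of $\mathscr{D}(G)/G$ is immediate.

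For (a), I would start from the parametrization $J(su) = G \cdot ((Z(M)^\circ s)^{reg} \orb^{M}_u)$ with $M = C_G(s)^\circ$, and write an arbitrary representative as $rv = g(zu')g^{-1}$ with $g \in G$, $z \in (Z(M)^\circ s)^{reg}$, $u' \in \orb^{M}_u$. Since $z$ lies in the regular locus, \eqref{RP} (together with Lemma \ref{lem_itsalevi} and Remark \ref{rk_itsalevi}) gives $C_G(z)^\circ = M$. The Jordan decomposition of $rv$ is then $r = gzg^{-1}$, $v = gu'g^{-1}$, and one reads off directly that the canonical triple associated to $rv$ equals
\begin{equation*}
g \cdot (M, Z(M)^\circ s, \orb^{M}_u) \in \mathscr{D}(G).
\end{equation*}

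For (b), given $\tau = (M, Z(M)^\circ s, \orb^M)$ and $\tau' = g \cdot \tau$, the inner automorphism $x \mapsto gxg^{-1}$ restricts to a bijection $Z(M)^\circ s \to g Z(M)^\circ s g^{-1}$. Combining Remark \ref{rk_conj_data_bir} (conjugation preserves birationality of induction) with Remark \ref{rk_bir_rig} (birational rigidity is preserved by $\mathrm{Aut}(G)$), this bijection further restricts to a bijection $\bir(Z(M)^\circ s, \orb^M) \xrightarrow{\sim} \bir(gZ(M)^\circ s g^{-1}, g \orb^M g^{-1})$. Equivariance of Lusztig--Spaltenstein induction under conjugation then gives
\begin{equation*}
g \cdot \bigl(z \ind_{M}^{C_G(z)^\circ} \orb^M \bigr) \cdot g^{-1} = (gzg^{-1}) \, \ind_{gMg^{-1}}^{C_G(gzg^{-1})^\circ} (g \orb^M g^{-1}),
\end{equation*}
so the $G$-saturations in the two unions defining $\overline{J(\tau)}^{bir}$ and $\overline{J(\tau')}^{bir}$ match term by term.

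I do not expect any serious obstacle: the argument is ultimately a bookkeeping of $G$-equivariance and the definitions. The only spot deserving a moment of care is the use of \eqref{RP} in step (a), which is what ensures that a change of representative shifts the canonical decomposition triple by an inner automorphism of $G$, rather than by a subtler modification of the pseudo-Levi $M$ or of $\orb^M$.
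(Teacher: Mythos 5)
Your proposal is correct and follows essentially the same route as the paper: the paper likewise takes the decomposition data of the two representatives to be conjugate by some $g\in G$ (this is just the definition of Jordan equivalence, which your step (a) re-derives from the parametrization of $J(su)$), and then uses equivariance of Lusztig--Spaltenstein induction together with Remark \ref{rk_conj_data_bir} to transport each term $G\cdot\bigl(z\ind_{C_G(s)}^{C_G(z)}\orb^{C_G(s)}_u\bigr)$ of one birational closure into the other, concluding by $G$-stability. The only cosmetic differences are that the paper argues element-wise rather than via a bijection of the indexing sets $\bir(\cdot,\cdot)$, and your appeal to Remark \ref{rk_bir_rig} is unnecessary since Definition \ref{defn_birclos} imposes no rigidity on $\orb^{C_G(s)}_u$.
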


\begin{proof}
We show that $J(su) = J(rv)$ implies $\overline{J(su)}^{bir} = \overline{J(rv)}^{bir}$.
Let $s_1 u_1 \in \overline{J(su)}^{bir}$,
namely we can assume that $s_1 \in Z(C_G(s))^\circ s$
and that $\orb^{C_G(s_1)}_{u_1} = \ind_{C_G(s)}^{C_G(s_1)} \orb^{C_G(s)}_u$ is birationally induced from $(C_G(s), \orb^{C_G(s)}_u)$.
By hypothesis, $(C_G(s), Z(C_G(s))^\circ s, \orb^{C_G(s)}_u)$
 and $(C_G(r), Z(C_G(r))^\circ r, \orb^{C_G(r)}_v)$ are conjugate by an element $g \in G$.
 Hence, $g s_1 g^{-1} \in Z(C_G(r))^\circ r$ and
\begin{align*}
g u_1 g^{-1} \in 
g (\ind_{C_G(s)}^{C_G(s_1)} \orb^{C_G(s)}_u) g^{-1}
&= \ind_{g C_G(s) g^{-1}}^{g C_G(s_1) g^{-1}} (g \orb^{C_G(s)}_u g^{-1}) =\\
&=\ind_{C_G(r)}^{C_G(g s_1 g^{-1})} \orb^{C_G(r)}_{v},
\end{align*}
which is birationally induced from $(C_G(r), \orb^{C_G(r)}_{v})$ by Remark \ref{rk_conj_data_bir}.
This yields $g s_1 u_1 g^{-1} = (g s_1 g^{-1})(g u_1 g^{-1}) \in \overline{J(rv)}^{bir}$, the proof follows from $G$-stability of $\overline{J(rv)}^{bir}$.
\end{proof}

We continue with other structural results on birational closures of Jordan classes.

%\begin{proof}
%Let $su \in G$ and let $J = J(su)$.
%Let $rv \in \overline{J}^{bir}$, namely we may assume that $r \in Z(C_G(s)^\circ)^\circ s$ and $\orb^{C_G(r)^\circ}_v$ is birationally induced from $(C_G(s)^\circ, \orb^{C_G(s)^\circ}_u)$.
%Let $r'v' \in J(rv)$, we show that $r'v' \in \overline{J}^{bir}$.
%Since $r'v' \in J(rv)$, we may assume: $ C_G(r')^\circ  = C_G(r)^\circ$, the components $Z ( C_G(r')^\circ )^\circ r'  = Z ( C_G(r)^\circ )^\circ r$ and $\orb^{C_G(r')^\circ}_{v'}  = \orb^{C_G(r)^\circ}_v$.
%In particular, $r' \in Z(C_G(r)^\circ)^\circ r \subset Z(C_G(s)^\circ)^\circ s$.
%Moreover, $\orb^G_{r'v'} = G \cdot (r' \orb^{C_G(r')^\circ}_{v'})$ and $rv \in \overline{J}^{bir}$ implies that  $\orb^{C_G(r')^\circ}_v = \orb^{C_G(r)^\circ}_v$ is the birationally induced class $\ind_{C_G(s)^\circ}^{C_G(r)^\circ} \orb^{C_G(s)^\circ}_u$.
%This proves that $r'v' \in \overline{J}^{bir}$.
%\end{proof}

\begin{corollary}
Let $J_1, J_2 \in \mathscr{J}(G)$. If $J_1 \subset \overline{J_2}^{bir}$, then $\overline{J_1}^{bir} \subset \overline{J_2}^{bir}$.
\end{corollary}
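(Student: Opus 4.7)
The plan is to take an arbitrary element $rv \in \overline{J_1}^{bir}$ and show it lies in $\overline{J_2}^{bir}$ by combining the two instances of birational induction implicit in the hypotheses via Proposition \ref{prop_trans}. Pick representatives $J_1 = J(s_1 u_1)$ and $J_2 = J(s_2 u_2)$. Since $G$ is semisimple simply connected, all centralizers of semisimple elements are connected, which will let me suppress the superscript $\circ$ on centralizers throughout. By Lemma \ref{lem_wdbirclos} and $G$-stability, the hypothesis $J_1 \subset \overline{J_2}^{bir}$ lets me choose representatives so that $s_1 \in \bir(Z(C_G(s_2))^\circ s_2, \orb^{C_G(s_2)}_{u_2})$ and $\orb^{C_G(s_1)}_{u_1} = \ind_{C_G(s_2)}^{C_G(s_1)} \orb^{C_G(s_2)}_{u_2}$ birationally. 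Similarly, for $rv \in \overline{J_1}^{bir}$, after conjugation I may assume $r \in \bir(Z(C_G(s_1))^\circ s_1, \orb^{C_G(s_1)}_{u_1})$ and $\orb^{C_G(r)}_v = \ind_{C_G(s_1)}^{C_G(r)} \orb^{C_G(s_1)}_{u_1}$ birationally.

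Next I would set up the inclusions of centralizers. Write $s_1 = t s_2$ with $t \in Z(C_G(s_2))^\circ$ and $r = t' s_1$ with $t' \in Z(C_G(s_1))^\circ$. Since $t$ centralises $C_G(s_2)$, we have $C_G(s_2) \leq C_G(s_1)$; since $t'$ centralises $C_G(s_1)$, we have $C_G(s_1) \leq C_G(r)$. Moreover, any element of $Z(C_G(s_1))$ commutes with $C_G(s_1) \ni t$ and with $s_1$, hence with $s_2 = t^{-1} s_1$, so $Z(C_G(s_1)) \leq C_G(s_2)$ and centralises $C_G(s_2)$, giving $Z(C_G(s_1))^\circ \leq Z(C_G(s_2))^\circ$. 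Consequently $r = (t' t) s_2 \in Z(C_G(s_2))^\circ s_2$, which is the correct shifted torus in which to check birationality for $\tau_2$.

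Now the core of the argument is transitivity of birational induction (Proposition \ref{prop_trans}) applied to the chain of Levi subgroups $C_G(s_2) \leq C_G(s_1) \leq C_G(r)$ (all connected reductive here, with $C_G(s_1)$ a Levi of $C_G(r)$ and $C_G(s_2)$ a Levi of $C_G(s_1)$ by Lemma \ref{lem_itsalevi} combined with the inclusions above). We have that $\ind_{C_G(s_2)}^{C_G(s_1)} \orb^{C_G(s_2)}_{u_2} = \orb^{C_G(s_1)}_{u_1}$ is birational and $\ind_{C_G(s_1)}^{C_G(r)} \orb^{C_G(s_1)}_{u_1} = \orb^{C_G(r)}_v$ is birational, so Proposition \ref{prop_trans} yields that $\ind_{C_G(s_2)}^{C_G(r)} \orb^{C_G(s_2)}_{u_2} = \orb^{C_G(r)}_v$ is birational. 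In particular $r \in \bir(Z(C_G(s_2))^\circ s_2, \orb^{C_G(s_2)}_{u_2})$, and $rv \in G \cdot (r \ind_{C_G(s_2)}^{C_G(r)} \orb^{C_G(s_2)}_{u_2}) \subset \overline{J_2}^{bir}$, as required.

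The only nontrivial input is the centraliser bookkeeping in the second paragraph (turning ``$s_1$ is a relative of $s_2$ in $Z(C_G(s_2))^\circ s_2$'' into the inclusions $C_G(s_2) \leq C_G(s_1) \leq C_G(r)$ and $Z(C_G(s_1))^\circ \leq Z(C_G(s_2))^\circ$); this is where the simply connected hypothesis on $G$ is used essentially, through the connectedness of centralisers of semisimple elements. Once that is in place, the statement is an immediate consequence of Proposition \ref{prop_trans}.
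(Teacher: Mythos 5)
Your proposal is correct and follows the same route as the paper, whose proof is exactly the (unwritten) combination of Definition \ref{defn_birclos} and Proposition \ref{prop_trans}: you have simply spelled out the choice of representatives via Lemma \ref{lem_wdbirclos}, the centraliser inclusions $C_G(s_2)\leq C_G(s_1)\leq C_G(r)$ and $Z(C_G(s_1))^\circ\leq Z(C_G(s_2))^\circ$ (with Lemma \ref{lem_itsalevi} supplying the Levi conditions), and then transitivity of birational induction. The bookkeeping is accurate, so this is a faithful expansion of the paper's argument rather than a different one.
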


\begin{proof}
This follows from Definition \ref{defn_birclos} and Proposition \ref{prop_trans}.
\end{proof}

\begin{proposition} \label{prop_birjord}
Let $J \in \mathscr{J}(G)$. Then $\overline{J}^{bir}$ is obtained from $\overline{J}^{reg}$ by neglecting a finite number of regular closures of other Jordan classes of $G$, and it is open in $\overline{J}^{reg}$.
In particular, birational closures of Jordan classes are irreducible locally closed subsets of $G$.
\end{proposition}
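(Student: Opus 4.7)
The plan is to reduce the statement to the structural description of $\bir(Z^\circ s, \orb^M)$ given in Proposition \ref{prop_birzl_gp}. Fix a representative $\tau = (M, Z^\circ s, \orb^M) \in \mathscr{D}(G)$ of $J$, where $Z \coloneqq Z(M)$. Since $G$ is semisimple simply connected, by Remark \ref{rk_centre} one has $\bir(Z^\circ s, \orb^M) = \wbir(Z^\circ s, \orb^M)$, and Proposition \ref{prop_birzl_gp} gives the finite decomposition
\[
Z^\circ s \setminus \bir(Z^\circ s, \orb^M) = \bigcup_{i \in \hat{\mathcal{Z}}(Z^\circ s, \orb^M)} Z(M_i)^\circ s_i,
\]
entirely contained in $Z^\circ s \setminus (Z^\circ s)^{reg}$ (cf.\ the last part of Remark \ref{rk_centre}), so that each $M_i$ strictly contains $M$.

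The key step is then to recognise each ``bad'' coset $Z(M_i)^\circ s_i$ as the seed of another Jordan class. Set $\tau_i \coloneqq (M_i, Z(M_i)^\circ s_i, \ind_M^{M_i} \orb^M) \in \mathscr{D}(G)$, which is legitimate decomposition data because $Z(M_i)^\circ s_i$ satisfies \eqref{RP} for $M_i$ by construction of $\mathcal{Z}(Z^\circ s)$ (Remark \ref{rk_cosettori_centr}). By transitivity of Lusztig-Spaltenstein induction (recalled after \eqref{induzione}), $\ind_{M_i}^{C_G(t)^\circ} \ind_M^{M_i} \orb^M = \ind_M^{C_G(t)^\circ} \orb^M$ for every $t \in Z(M_i)^\circ s_i$. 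Inserting this into the standard description of the regular closure of a Jordan class yields
\[
\overline{J(\tau_i)}^{reg} = \bigcup_{t \in Z(M_i)^\circ s_i} G \cdot \bigl( t \, \ind_M^{C_G(t)^\circ} \orb^M \bigr),
\]
and matching this against the analogous unions for $\overline{J(\tau)}^{reg}$ and for $\overline{J(\tau)}^{bir}$ (Definition \ref{defn_birclos}) produces the set-theoretic identity
\[
\overline{J(\tau)}^{reg} \setminus \overline{J(\tau)}^{bir} = \bigcup_{i \in \hat{\mathcal{Z}}(Z^\circ s, \orb^M)} \overline{J(\tau_i)}^{reg},
\]
a finite union of regular closures of Jordan classes distinct from $J$ (as $M \lneq M_i$).

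For the openness assertion, I would establish, for every $i$, the equality $\overline{J(\tau_i)}^{reg} = \overline{J(\tau_i)} \cap \overline{J(\tau)}^{reg}$; since $\overline{J(\tau_i)}$ is closed in $G$, this exhibits $\bigcup_i \overline{J(\tau_i)}^{reg}$ as a finite union of closed subsets of $\overline{J(\tau)}^{reg}$, so its complement $\overline{J(\tau)}^{bir}$ is open in $\overline{J(\tau)}^{reg}$. The inclusion ``$\subset$'' is immediate from the previous step; the reverse rests on a dimension count, as every conjugacy class parametrised by some $t \in Z^\circ s$ shares the common dimension $\dim G - \codim_M \orb^M$ (invariance of $\codim$ under induction, \cite[Proposition 4.6]{CE1}), while all non-regular points of $\overline{J(\tau_i)}$ lie on classes of strictly smaller dimension and therefore cannot enter $\overline{J(\tau)}^{reg}$. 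The main obstacle I foresee is precisely this book-keeping on dimensions across the various boundary strata; once settled, $\overline{J(\tau)}^{bir}$ is an open subset of the irreducible locally closed subset $\overline{J(\tau)}^{reg}$ of $G$, hence itself irreducible and locally closed.
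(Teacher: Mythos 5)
Your proposal is correct and follows essentially the same route as the paper: decompose $Z^\circ s \setminus \bir(Z^\circ s, \orb^M)$ via Proposition \ref{prop_birzl_gp}, use Lemma \ref{lem_itsalevi} and transitivity of induction to identify the classes over each $Z(M_i)^\circ s_i$ with $\overline{J(\tau_i)}^{reg}$ for $\tau_i = (M_i, Z(M_i)^\circ s_i, \ind_M^{M_i}\orb^M)$, and then deduce openness. Your final dimension argument, i.e.\ $\overline{J(\tau_i)} \cap \overline{J(\tau)}^{reg} = \overline{J(\tau_i)}^{reg}$ because all classes involved share the dimension $\dim G - \codim_M \orb^M$ by invariance of codimension under induction, is exactly the paper's step of writing $\bigcup_i \overline{J(\tau_i)}^{reg} = \bigl( \bigcup_i \overline{J(\tau_i)} \bigr) \cap G_{(n)}$ and concluding closedness in $\overline{J}^{reg} \subset G_{(n)}$, so the anticipated ``book-keeping obstacle'' is already settled by that observation.
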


\begin{proof}
Let $J \coloneqq J(\tau)$ with $\tau = (M, Z(M)^\circ s, \orb^M) \in \mathscr{D}(G)$ and let $Z \coloneqq Z(M)$.
Consider the set $\hat{\mathcal{Z}}(Z^\circ s, \orb^M)$ defined in Remark \ref{rk_not_bir},  and write $\hat{\mathcal{Z}}(Z^\circ s, \orb^M) = \{ Z(M_i)^\circ s_i \mid i \in \hat{I}\}$, for a suitable index set $\hat{I}$. 
As in the proof of Proposition \ref{prop_birzl_gp}, we have $\bir(Z^\circ s, \orb^M) = Z^\circ s \setminus \bigcup_{i \in \hat{I}} Z(M_i)^\circ s_i$.
Since $Z^\circ s = Z^\circ s_i$  for all $i \in \hat{I}$, Lemma \ref{lem_itsalevi} implies that $Z^\circ s_i$ satisfies \eqref{RP} for $M$ and $C_G(s)$ is a Levi subgroup of $C_G(s_i)$.
Then:
$$\overline{J}^{reg} \setminus \overline{J}^{bir} = \bigcup_{i \in \hat{I}} \left( \bigcup_{t \in Z(C_G(s_i))^\circ s_i} G \cdot ( t \ind_{M}^{C_G(t)} \orb^M ) \right).$$
Set
$\orb^{C_G(s_i)} \coloneqq \ind_{M}^{C_G(s_i)} \orb^M$ and
 $\tau_i \coloneqq ( C_G(s_i),
 Z(C_G(s_i))^\circ s_i,
  \orb^{C_G(s_i)}) \in \mathscr{D}(G)$ for all $i \in\hat{I}$.
  Then, by transitivity of induction:
   $$\bigcup_{t \in Z(C_G(s_i))^\circ s_{i}} G \cdot ( t \ind_{M}^{C_G(t)} \orb^M )
 = \bigcup_{t \in Z(C_G(s_i))^\circ s_i} G \cdot ( t \ind_{C_G(s_i)}^{C_G(t)}  \orb^{C_G(s_i)} ) = \overline{J(\tau_i)}^{reg}.$$
If $J \subset G_{(n)}$, then $\overline{J}^{reg}, \overline{J}^{bir} \subset G_{(n)}$, hence also  $\overline{J}^{reg} \setminus \overline{J}^{bir} \subset G_{(n)}$, and $\overline{J}^{reg} \setminus \overline{J}^{bir} = \bigcup_{i \in I} \overline{J(\tau_i)}^{reg} = \left( \bigcup_{i \in I} \overline{J(\tau_i)} \right) \cap G_{(n)}$, 
so that $\overline{J}^{reg} \setminus \overline{J}^{bir} $ is closed in $G_{(n)}$.
As a consequence, $\overline{J}^{reg} \setminus \overline{J}^{bir} $ is closed in $\overline{J}^{reg}$. The last assertion follows since $\overline{J}^{reg}$ is an irreducible locally closed subset of $G$.
\end{proof}

\begin{corollary} \label{cor_unijord}
If $J \in \mathscr{J}(G)$, then $\overline{J}^{bir}$ is a union of Jordan classes in $G$.
\end{corollary}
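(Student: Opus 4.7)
The plan is to read the statement off Proposition \ref{prop_birjord} combined with the well-known fact (recalled in Section \ref{s_jcbs}) that the regular part of the closure of a Jordan class is a union of Jordan classes. First I would invoke Proposition \ref{prop_birjord}, which provides the explicit description
\[
\overline{J}^{bir} = \overline{J}^{reg} \setminus \bigcup_{i \in \hat{I}} \overline{J(\tau_i)}^{reg},
\]
where the $\tau_i \in \mathscr{D}(G)$ are the finitely many decomposition data produced there.

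Next I would recall that both $\overline{J}^{reg}$ and each $\overline{J(\tau_i)}^{reg}$ are unions of Jordan classes: this is exactly the statement quoted from \cite{LusztigICC, Broer_Lectures, broer_1998} at the beginning of Section \ref{s_jcbs}, applied to $J$ and to each $J(\tau_i)$. Since Jordan classes partition $G$, any subset of $G$ that can be written as a union of Jordan classes is saturated with respect to this partition, and saturation is preserved by finite unions and by set-theoretic differences.

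I would therefore conclude that $\overline{J}^{bir}$, being the difference of the saturated set $\overline{J}^{reg}$ by the saturated set $\bigcup_{i \in \hat{I}} \overline{J(\tau_i)}^{reg}$, is itself saturated, hence a union of Jordan classes. There is no real obstacle here; the whole content of the corollary has been packaged into Proposition \ref{prop_birjord}, and the present step is a purely formal manipulation of partitions.
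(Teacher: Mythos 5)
Your argument is correct and matches the paper's own proof: both deduce the claim from Proposition \ref{prop_birjord} together with the fact recalled in Section \ref{s_jcbs} that regular closures of Jordan classes are unions of Jordan classes, concluding by saturation of set differences with respect to the Jordan partition. Nothing is missing.
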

\begin{proof}
For $J \in \mathscr{J}(G)$, we have that $\overline{J}^{reg}$ and $\overline{J}^{reg} \setminus \overline{J}^{bir}$ are unions of Jordan classes of $G$ (see \S 5 and Proposition \ref{prop_birjord}), hence so is $\overline{J}^{bir}$.
\end{proof}

\begin{remark} \label{rk_bircloslie} Consider the adjoint action of $G$ on $\mathfrak{g}$.
Let $\Theta \subset \Delta$ and let $\mathfrak{l} \coloneqq \Lie(L_\Theta)$ be a standard Levi subalgebra of $\mathfrak{g}$.
 Then we can define the sets $\mathcal{L}_\mathfrak{l} \coloneqq \{ \mathfrak{m} \mbox{ Levi subalgebra of } \mathfrak{g} \mid \mathfrak{m} \supset \mathfrak{l} \}$ and $\mathcal{Z}_{\mathfrak{l}} \coloneqq \{ \mathfrak{z(m)} \mid \mathfrak{m} \in \mathcal{L}_\mathfrak{l}\}$: inclusion of subalgebras endows $\mathcal{L}_\mathfrak{l}$ and $\mathcal{Z}_\mathfrak{l}$ with a poset structure.
The set $\mathcal{L}_\mathfrak{l}$ is finite, as $\mathfrak{m} \in \mathcal{L}_\mathfrak{l}$ has the form $\mathfrak{m} = \mathfrak{h} \oplus \bigoplus_{\alpha \in \Xi} \mathfrak{g}_\alpha$, where $\Xi$ is a root subsystem of $\Phi$ such that $\Z \Theta \cap \Phi \subset \Xi$, and there are finitely many possibilities for $\Xi$.
The map $\mathfrak{m} \mapsto \mathfrak{z(m)}$ gives an anti-isomorphism of posets $\mathcal{L}_{\mathfrak{l}}  \to \mathcal{Z}_{\mathfrak{l}}$ whose inverse is $\mathfrak{k} \mapsto \mathfrak{c_g(k)}$, see \cite[Theorem 29.5.7]{ty}.
Readapting proofs of Lemma \ref{lem_bir_strata}, Propositions \ref{prop_birzl_gp}, \ref{prop_birjord} and Corollary \ref{cor_unijord}, one gets the following results, already obtained in \cite[§4]{Lo2016}.
If $\orb^\mathfrak{l} \in \mathcal{N}_\mathfrak{l}/L$, the set $\bir(\mathfrak{z(l)}, \orb^\mathfrak{l})$ is the complement to a finite (possibly empty) union of elements in $\mathcal{Z}_\mathfrak{l}$, it always contains $\mathfrak{z(l)}^{reg}$, it is open in $\mathfrak{z(l)}$ and it coincides with the whole $\mathfrak{z(l)}$ if and only if it contains $0$.
For any pair $(\mathfrak{l}, \orb^\mathfrak{l})$ where $\mathfrak{l} \subset \mathfrak{g}$ is a Levi subalgebra and $\orb^\mathfrak{l} \in \mathcal{N}_\mathfrak{l}/L$ one can define the birational closure of the decomposition class $\mathfrak{J}(\mathfrak{l}, \orb^\mathfrak{l})$ (see \eqref{eq_deco_class} in Remark \ref{rk_deco_class}):
$$\overline{\mathfrak{J}(\mathfrak{l}, \orb^\mathfrak{l})}^{bir} \coloneqq \bigcup_{\zeta \in \bir(\mathfrak{z(l)},\orb^\mathfrak{l})} \ind_{\mathfrak{l}}^{\mathfrak{g}} (\zeta + \orb^\mathfrak{l}).$$
This is open in $\overline{\mathfrak{J}(\mathfrak{l}, \orb^\mathfrak{l})}^{reg}$ and in $\overline{\mathfrak{J}(\mathfrak{l}, \orb^\mathfrak{l})}$, hence it is a locally closed irreducible subset of $\mathfrak{g}$ and it is a birational sheet of $\mathfrak{g}$ if and only if $\orb^\mathfrak{l}$ is birationally rigid in $\mathfrak{l}$.
\end{remark}

\subsection{Birational sheets} \label{ss_bs}
In this section, we still assume $G$  semisimple and simply connected and we prove one of the main result of the work: inspired by \cite[\S 4]{Lo2016}, we define birational sheets for the conjugation action of $G$ on itself and we prove that they partition $G$.

We start by defining the set: 
\begin{equation} \label{eq_bbg}
\mathscr{BB}(G) \coloneqq \{ \tau = (M, Z(M)^\circ s, \orb^M) \in \mathscr{D}(G) \mid \orb^M \mbox{ birationally rigid} \}.
\end{equation}
$G$ acts on $\mathscr{BB}(G)$ by simultaneous conjugacy and  $\mathscr{BB}(G)/G$ is finite because $\mathscr{D}(G)/G$ is so.

\begin{definition} \label{defn_birsh}
For $\tau \in \mathscr{BB}(G)$, the \emph{birational sheet} associated to $\tau$ is: $S(\tau)^{bir} = \overline{J(\tau)}^{bir}$. 
\end{definition}

\begin{lemma} \label{lem_birsh} Let $\tau \in \mathscr{BB}(G)$. Then $S(\tau)^{bir}$ is a $G$-stable irreducible locally closed subvariety of $G$ and decomposes as a union of Jordan classes. Birational sheets of $G$ are in one-to-one correspondence with the finite set $\mathscr{BB}(G)/G$.
\end{lemma}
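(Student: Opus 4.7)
The plan is to assemble the statement from the structural results already proved in this section, with the only real content being the injectivity of the parametrization of birational sheets by $\mathscr{BB}(G)/G$.

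The topological and combinatorial assertions are direct citations. $G$-stability of $S(\tau)^{bir} = \overline{J(\tau)}^{bir}$ is built into Definition \ref{defn_birclos}, while irreducibility and local closedness are the content of Proposition \ref{prop_birjord}, and the decomposition as a union of Jordan classes is Corollary \ref{cor_unijord}. Well-definedness of the assignment $[\tau] \mapsto S(\tau)^{bir}$ on $\mathscr{BB}(G)/G$ is Lemma \ref{lem_wdbirclos}, and finiteness of $\mathscr{BB}(G)/G$ follows from the inclusion $\mathscr{BB}(G) \subset \mathscr{D}(G)$ together with the finiteness of Jordan classes in $G$.

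For the bijection, surjectivity is immediate from Definition \ref{defn_birsh}. For injectivity, the key observation is that $S(\tau)^{bir}$ determines the closure $\overline{J(\tau)}$, and hence $J(\tau)$ itself. Indeed, by Proposition \ref{prop_birjord} the set $S(\tau)^{bir}$ is open in $\overline{J(\tau)}^{reg}$, which is in turn open and dense in the irreducible variety $\overline{J(\tau)}$; moreover $J(\tau)$ is contained in $S(\tau)^{bir}$ by construction (see the remark following Definition \ref{defn_birclos}). Passing to closures yields $\overline{S(\tau)^{bir}} = \overline{J(\tau)}$.

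Given $\tau_1, \tau_2 \in \mathscr{BB}(G)$ with $S(\tau_1)^{bir} = S(\tau_2)^{bir}$, one then takes closures to obtain $\overline{J(\tau_1)} = \overline{J(\tau_2)}$. Each $J(\tau_i)$ is dense in this common irreducible variety, so $J(\tau_1) \cap J(\tau_2) \neq \varnothing$, and since Jordan classes partition $G$ this forces $J(\tau_1) = J(\tau_2)$. The bijection $\mathscr{J}(G) \leftrightarrow \mathscr{D}(G)/G$ recalled at the beginning of Section \ref{s_jcbs} then gives $\tau_1 \sim_G \tau_2$. No step looks like a genuine obstacle: the hard work sits in Proposition \ref{prop_birjord}, which guarantees that the birational closure stays dense inside $\overline{J(\tau)}$, and the rest of the proof is the elementary fact that a Jordan class is determined by its closure.
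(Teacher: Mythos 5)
Your proof is correct and follows essentially the same route as the paper, whose proof simply cites Definition \ref{defn_birsh}, Proposition \ref{prop_birjord} and Corollary \ref{cor_unijord} and leaves the injectivity of $[\tau]\mapsto S(\tau)^{bir}$ implicit; your closure argument ($J(\tau)\subset S(\tau)^{bir}\subset \overline{J(\tau)}$, hence $\overline{S(\tau)^{bir}}=\overline{J(\tau)}$, and a Jordan class is determined by its closure) is exactly the intended completion. One small precision: density of the two Jordan classes in the common closure alone would not force them to meet, but since Jordan classes are locally closed they are open in their closures, so the intersection is indeed nonempty and your argument goes through.
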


\begin{proof}
This follows from Definition \ref{defn_birsh}, Proposition \ref{prop_birjord} and Corollary  \ref{cor_unijord}.
\end{proof}

\begin{theorem} \label{thm_partition} 
Let $G$ be semisimple simply connected. Then the birational sheets of $G$ form a partition of $G$.
\end{theorem}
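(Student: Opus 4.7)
The plan is to reduce the statement directly to the uniqueness of birational induction data proved in Theorem \ref{thm_unibirind}, using the translation between triples $(M, z, \orb^M) \in \mathscr{B}(G)$ and decomposition data $(M, Z(M)^\circ z, \orb^M) \in \mathscr{BB}(G)$. Throughout, the simple-connectedness of $G$ ensures $\bir = \wbir$ on every central coset (Remark \ref{rk_centre}) and allows the identification ``birationally induced $=$ weakly birationally induced''.

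First I would establish the covering property. Let $g \in G$ with Jordan decomposition $g = su$. By Theorem \ref{thm_unibirind}, the class $\orb^G_g$ is birationally induced from some triple $(M, s', \orb^M) \in \mathscr{B}(G)$. By Definition \ref{defn_bid}, $\orb^M$ is birationally rigid and $Z(M)^\circ s'$ satisfies \eqref{RP} for $M$, so $\tau \coloneqq (M, Z(M)^\circ s', \orb^M)$ lies in $\mathscr{BB}(G)$. Since birational induction from $(M, s', \orb^M)$ amounts to $s' \in \bir(Z(M)^\circ s', \orb^M)$, the term $G \cdot (s' \ind_M^{C_G(s')} \orb^M) = \orb^G_g$ appears in the union defining $\overline{J(\tau)}^{bir} = S(\tau)^{bir}$, so $g \in S(\tau)^{bir}$.

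Next I would prove disjointness. Suppose $g \in S(\tau_1)^{bir} \cap S(\tau_2)^{bir}$, with $\tau_i = (M_i, Z(M_i)^\circ s_i, \orb^{M_i}) \in \mathscr{BB}(G)$. Unwinding Definition \ref{defn_birclos}, for each $i$ there exists $z_i \in \bir(Z(M_i)^\circ s_i, \orb^{M_i})$ with $\orb^G_g = G \cdot (z_i \ind_{M_i}^{C_G(z_i)} \orb^{M_i})$. I would now observe that each triple $(M_i, z_i, \orb^{M_i})$ satisfies the three conditions of Definition \ref{defn_bid}: (a) $Z(M_i)^\circ z_i = Z(M_i)^\circ s_i$ still satisfies \eqref{RP} because it is the same connected component; (b) $\orb^{M_i}$ is birationally rigid because $\tau_i \in \mathscr{BB}(G)$; (c) membership of $z_i$ in $\bir(Z(M_i)^\circ s_i, \orb^{M_i})$ is exactly the birationality condition. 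Hence both triples lie in $\mathscr{B}(G)$ and induce the same class $\orb^G_g$, so by the uniqueness part of Theorem \ref{thm_unibirind} they are $G$-conjugate. Conjugating pseudo-Levi, element and orbit simultaneously also conjugates $Z(M_1)^\circ z_1$ to $Z(M_2)^\circ z_2$, and therefore $\tau_1$ and $\tau_2$ represent the same class in $\mathscr{D}(G)/G$. Lemma \ref{lem_wdbirclos} then gives $S(\tau_1)^{bir} = S(\tau_2)^{bir}$.

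The only delicate point, and the one I expect to need the most care, is the back-and-forth between the two parameter sets: one must check that the triple $(M, z, \orb^M)$ extracted from the birational-closure formula genuinely lies in $\mathscr{B}(G)$ (so that Theorem \ref{thm_unibirind} applies), and conversely that a $G$-conjugacy of triples in $\mathscr{B}(G)$ upgrades to a conjugacy of the corresponding data in $\mathscr{BB}(G)$. Both checks are straightforward once one recalls that $Z(M)^\circ z = Z(M)^\circ s$ for any $z$ in that coset and that conjugation respects Jordan decomposition; there are no new geometric ingredients, so the whole argument is essentially a bookkeeping reduction to Theorem \ref{thm_unibirind}.
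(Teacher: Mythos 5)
Your proposal is correct and follows essentially the same route as the paper: both proofs reduce the partition statement to the uniqueness of birational induction data in Theorem~\ref{thm_unibirind}, translating between triples in $\mathscr{B}(G)$ and classes in $\mathscr{BB}(G)/G$ and invoking the well-definedness of birational closures (Lemma~\ref{lem_wdbirclos}). You merely spell out the covering and disjointness steps that the paper's terser argument leaves implicit; the bookkeeping you flag (that the extracted triple lies in $\mathscr{B}(G)$, using $Z(M)^\circ z = Z(M)^\circ s$ and $\bir=\wbir$ in the simply connected case) is exactly what the paper relies on.
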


\begin{proof}
We prove that $\orb^G_{su} \in G/G$ belongs to a unique birational sheet.
By Theorem \ref{thm_unibirind}, $\orb^G_{su}$ is birationally induced in a unique way from $(M, s, \orb^M) \in \mathscr{B}(G)$ up to conjugation in $G$.
This triple uniquely determines the class of $\tau = (M, Z(M)^\circ s, \orb^M)$ in $\mathscr{BB}(G)/G$.
Hence, $\orb^G_{su} \subset S(\tau)^{bir}$ and $S(\tau)^{bir}$ is unique.
\end{proof}

Every birational sheet, being irreducible and contained in $G_{(n)}$ for some $n \in \N$, is contained in a sheet.

\begin{example} \label{eg_c2}
In general, a sheet is not an union of birational sheets.
Let $G = {\rm Sp}_4(\C)$, let $\Theta_i = \{\alpha_i\}$ for $i =1,2$ 
and let $L_i = L_{\Theta_i}$. Let $\orb^G_{subreg} \subset \mathcal{U}$ be the subregular class, then $\orb^G_{subreg} = \ind_{L_i}^{G} \{1\}$ for $i=1,2$, but it is birationally induced only from $(L_1, \{1\})$. 
 Let $\tau_i = (L_i, Z(L_i)^\circ, \{1 \})$
  for $i = 1,2$, 
  then $\overline{J(\tau_1)}^{bir} = \overline{J(\tau_1)}^{reg}$ but $\overline{J(\tau_2)}^{reg} = \overline{J(\tau_2)}^{bir} \cup \orb^G_{subreg}$, where
  $\overline{J(\tau_2)}^{bir}$ is a birational sheet, whereas $\orb^G_{subreg}$ is not so.
\end{example}

All sheets of $\mathfrak{g}$ contain nilpotent orbits \cite[§3.2]{Borho}, but not all birational sheet of $\mathfrak{g}$ do \cite[§4]{Lo2016}. Similarly, all sheets of $G$ contain isolated classes, see \cite[Proposition 3.1]{CarnoBul}, but we give an  example of a birational sheet without this property.

\begin{example} \label{eg_c3} 
Let $G = {\rm Sp}_6(\C)$, let $s_a = {\rm diag}(1,a,-1,-1,a^{-1},1) \in T$. Fix $\bar{a} \in \C \setminus \{-1,0,1\}$ and set $\bar{s} \coloneqq s_{\bar{a}}$ and $M \coloneqq C_G(\bar{s})$. If $\tau = (M, Z(M)^\circ \bar{s}, \{1\})$, then:
\begin{align*}
\overline{J(\tau)}^{reg} =  \bigcup_{z \in Z(M)^\circ \bar{s}} G \cdot (z \ind_M^{C_G(z)} \{1\}) =  \left( \bigcup_{a \in \C \setminus \{-1,0,1\}} \orb^G_{s_a} \right) \cup \orb^G_{s_1 u} \cup \orb^G_{s_{-1} v}
\end{align*}
where the first member is $J(\tau)$ while $\orb^G_{s_1 u}$ and $\orb^G_{s_{-1} v}$ are the two isolated classes of the sheet $\overline{J(\tau)}^{reg}$, indeed $C_G(s_1)$ and  $C_G(s_{-1})$ are semisimple of type $\mathsf{C}_1 \mathsf{C}_2$.
Decompose $C_G(s_1)= K' K''$, where $K' \simeq {\rm Sp}_4(\C)$ and $K'' \simeq {\rm Sp}_2(\C)$ and decompose $M = M' K''$, with $M'  \simeq \C^\times \times {\rm Sp}_2(\C) \leq K'$.
Then $\orb^{C_G(s_1)}_u = \ind_M^{C_G(s_1)} \{1\} \simeq \orb^{K'}_{subreg} \times \{1\}$, where $\{1\}$ is the trivial class in $K''$ and $\orb^{K'}_{subreg} = \ind_{M'}^{K'} \{1\}$ is the subregular class in $\mathcal{U}_{K'}$. By Example \ref{eg_c2},  the latter induction is not birational.
Hence, $\orb^{C_G(s_1)}_u$ is not birationally induced from $(M, \{1\})$ and $\orb^G_{s_1 u} \not\subset \overline{J(\tau)}^{bir}$.
This argument can be repeated, up to reordering the decomposition into simple groups, for $C_G(s_{-1}) \simeq {\rm Sp}_2(\C) \times {\rm Sp}_4(\C)$ and $v \in \ind_M^{C_G(s_{-1})} \{1\}$.
Therefore, the birational sheet $\overline{J(\tau)}^{bir} = J(\tau)$ does not contain any isolated class.
\end{example}

\begin{lemma} \label{lem_shbirsh}
Let $\tau = (M, Z(M)^\circ s, \orb^M) \in \mathscr{D}(G)$ with $\orb^M \in \mathcal{U}_M/M$ rigid. Then the birational sheet $\overline{J(\tau)}^{bir}$ is dense in the sheet $\overline{J(\tau)}^{reg}$ and $\overline{J(\tau)}^{reg}$ is the unique sheet of $G$ containing $\overline{J(\tau)}^{bir}$.

Moreover, $\overline{J(\tau)}^{bir}=\overline{J(\tau)}^{reg}$ if and only if all isolated classes $G \cdot (r\ind_{M}^{C_G(r)} \orb^M)$ with $r \in Z(M)^\circ s$ are birationally induced.
\end{lemma}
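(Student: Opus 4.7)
The plan is to derive the statement from the structural results already established in \S \ref{ss_bcjc}, principally Proposition \ref{prop_birjord} and the Corollary preceding Remark \ref{rk_not_bir}.

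First I would note that rigidity of $\orb^M$ implies birational rigidity (as stated in the example preceding Example \ref{eg_type_a}), so $\tau \in \mathscr{BB}(G)$ and the birational sheet $S(\tau)^{bir} = \overline{J(\tau)}^{bir}$ is a bona fide object. Rigidity of $\orb^M$ also guarantees, by the characterization of sheets recalled in Section \ref{s_jcbs} (citing \cite[Theorem 5.6 (a)]{CE1}), that $\overline{J(\tau)}^{reg}$ is itself a sheet of $G$.

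For density, Proposition \ref{prop_birjord} already shows that $\overline{J(\tau)}^{bir}$ is open in the irreducible variety $\overline{J(\tau)}^{reg}$, and by construction it contains $J(\tau)$, hence is non-empty; density is immediate. For the uniqueness of the sheet containing $\overline{J(\tau)}^{bir}$, let $n$ denote the common orbit dimension along $J(\tau)$. Then $\overline{J(\tau)}^{reg}$ is an irreducible component of $G_{(n)}$. Any sheet containing $\overline{J(\tau)}^{bir}$ meets $G_{(n)}$ and is an irreducible component of it containing the irreducible, dense subset $\overline{J(\tau)}^{bir}$ of $\overline{J(\tau)}^{reg}$; by maximality of irreducible components, it must coincide with $\overline{J(\tau)}^{reg}$.

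For the \emph{moreover} part, I would invoke the explicit description established in the proof of Proposition \ref{prop_birjord},
$$\overline{J(\tau)}^{reg} \setminus \overline{J(\tau)}^{bir} = \bigcup_{Z(M_i)^\circ s_i \in \hat{\mathcal{Z}}(Z^\circ s, \orb^M)} \overline{J(\tau_i)}^{reg},$$
so the equality $\overline{J(\tau)}^{bir} = \overline{J(\tau)}^{reg}$ is equivalent to $\hat{\mathcal{Z}}(Z^\circ s, \orb^M) = \varnothing$, i.e.\ to $\wbir(Z^\circ s, \orb^M) = Z^\circ s$. By the Corollary preceding Remark \ref{rk_not_bir}, the latter condition is in turn equivalent to the assertion that every isolated element $r \in Z^\circ s$ lies in $\wbir(Z^\circ s, \orb^M)$; by Remark \ref{rk_envelope}, this is exactly the statement that each isolated class $G \cdot (r \ind_{M}^{C_G(r)} \orb^M)$ (with $r \in Z^\circ s$ isolated) is birationally induced.

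The main obstacle has essentially been absorbed into Proposition \ref{prop_birjord}: once the stratification of the complement $\overline{J(\tau)}^{reg} \setminus \overline{J(\tau)}^{bir}$ by the shifted tori indexed by $\hat{\mathcal{Z}}(Z^\circ s, \orb^M)$ is in hand, what remains is a formal manipulation of the poset $\mathcal{Z}(Z^\circ s)$ together with the transitivity criterion provided by Lemma \ref{lem_bir_strata}.
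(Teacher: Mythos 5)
Your proposal is correct and follows essentially the same route as the paper: density and openness come from Proposition \ref{prop_birjord}, uniqueness of the ambient sheet from the fact that $\overline{J(\tau)}^{reg}$ is itself a sheet when $\orb^M$ is rigid (the paper argues via closures and you via irreducible components of $G_{(n)}$, which amounts to the same thing), and the ``moreover'' part from Lemma \ref{lem_bir_strata}, Proposition \ref{prop_birzl_gp} and the corollary on isolated elements, exactly the ingredients the paper cites. You merely spell out details the paper leaves implicit; no gaps.
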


\begin{proof}
We have $\tau \in \mathscr{BB}(G)$, since $\orb^M$ is in particular birationally rigid and $\overline{J(\tau)}^{bir}$ is open and dense in the irreducible set $\overline{ J(\tau)}^{reg}$ by Proposition \ref{prop_birjord}. Suppose $S$ is a sheet of $G$ with $\overline{J(\tau)}^{bir} \subset S$. Then the closure of $\overline{J(\tau)}^{bir}$ equals $\overline{J(\tau)} \subset \overline{S}$, so $\overline{J(\tau)}^{reg} = S$.

The last assertion follows from Lemma \ref{lem_bir_strata} and Proposition \ref{prop_birzl_gp}.
\end{proof}

\begin{corollary}
Let $G$ be simple simply connected of type $\mathsf{A}$. Then all sheets are birational sheets. In particular, sheets of $G$ form a partition.
\end{corollary}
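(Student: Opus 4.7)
The plan is to combine Lemma \ref{lem_shbirsh} with the structural facts about type $\mathsf{A}$ groups, and then invoke Theorem \ref{thm_partition}. Concretely, I would argue that in type $\mathsf{A}$ simply connected every sheet $S$ of $G$ satisfies $S=\overline{J(\tau)}^{bir}$ for an appropriate $\tau\in\mathscr{BB}(G)$, so sheets and birational sheets coincide; the partition statement is then immediate from Theorem \ref{thm_partition}.

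First I would reduce to a convenient form for $G$: since $G$ is simply connected of type $\mathsf{A}$, $G\simeq \mathrm{SL}_n(\C)$, and by Steinberg's theorem centralizers of semisimple elements are connected. Moreover, since the highest root of $\mathsf{A}_n$ has all coefficients equal to $1$, every pseudo-Levi subgroup of $G$ is $G$-conjugate to a Levi subgroup of $G$; equivalently, for any semisimple $s\in G$, $C_G(s)^\circ=C_G(s)$ is a Levi subgroup of $G$ (of type $\mathsf{A}$).

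Next I would pick a sheet $S$. By the paragraph following Remark \ref{rk_deco_class}, there is $\tau=(M,Z(M)^\circ s,\orb^M)\in\mathscr{D}(G)$ with $\orb^M\in\mathcal{U}_M/M$ rigid and $S=\overline{J(\tau)}^{reg}$. By the previous paragraph $M$ is a Levi of $G$, hence of type $\mathsf{A}$, and Example \ref{eg_type_a} forces $\orb^M=\{1\}$. In particular $\orb^M$ is birationally rigid, so $\tau\in\mathscr{BB}(G)$ and the birational sheet $\overline{J(\tau)}^{bir}$ is defined. By Lemma \ref{lem_shbirsh} it suffices to check that for every $r\in Z(M)^\circ s$, the class $\ind_M^{C_G(r)}\{1\}$ is birationally induced from $(M,\{1\})$. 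But $C_G(r)$ is again a Levi of $G$ containing $M$ as a Levi factor, hence is a product of factors of type $\mathsf{A}$ (and a central torus), and the induction decomposes accordingly. Example \ref{eg_type_a} (applied factor by factor, using Lemma \ref{lem_suffbir} together with connectedness of centralizers of unipotent elements in the adjoint group of type $\mathsf{A}$) gives the required birationality.

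This yields $\overline{J(\tau)}^{bir}=\overline{J(\tau)}^{reg}=S$, so every sheet is a birational sheet; the partition assertion then follows from Theorem \ref{thm_partition}. The only step that is not purely formal is the simultaneous verification that $M$ and $C_G(r)$ are themselves of type $\mathsf{A}$, i.e. that pseudo-Levi subgroups in type $\mathsf{A}_n$ simply connected are already Levi subgroups; I would probably isolate this as a small preliminary remark (or reference Sommers' classification together with the fact that the mark of every affine node in $\mathsf{A}_n$ is $1$) so that Example \ref{eg_type_a} can be applied without friction.
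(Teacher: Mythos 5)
Your proposal is correct and follows essentially the same route as the paper, whose proof simply cites Example \ref{eg_type_a}, Lemma \ref{lem_shbirsh} and Theorem \ref{thm_partition}; you merely spell out the details that the paper leaves implicit (pseudo-Levis of $\mathrm{SL}_n$ are Levis, rigid unipotent classes in type $\mathsf{A}$ Levis are trivial, and birationality via Lemma \ref{lem_suffbir} and connectedness of unipotent centralizers in the adjoint group).
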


\begin{proof}
This follows from Example \ref{eg_type_a} and Lemma \ref{lem_shbirsh} and Theorem \ref{thm_partition}.
\end{proof}

\begin{remark} \label{rk_lusztig}
We claim that Lusztig's strata defined in \cite{lustrata} are disjoint unions of birational sheets.
This follows from \cite[Proof of Theorem 2.1]{CarnoBul}: it is proven therein that if $J \in \mathscr{J}(G)$ lies in a stratum, then $\overline{J}^{reg}$ lies in that stratum. Since $\overline{J}^{bir} \subset \overline{J}^{reg}$, we get that strata are unions of birational closures of Jordan classes. By taking maximal sets with respect to inclusion in this decomposition, we conclude our claim.
\end{remark}

\subsection{Weakly birational sheets} \label{s_wbs}
We would like to cast some light on the case in which $G$ is not simply connected.
Recall that in this case, birational induction and weakly birational induction are two distinct concepts so that $\bir ( Z(M)^\circ s , \orb^M )$ as in \eqref{eq_centre} can be a proper subset of $\wbir( Z(M)^\circ s , \orb^M )$ as in \eqref{eq_wcentre}.

In \S \ref{ss_bcjc} and \S \ref{ss_bs}, we treated Jordan classes as the ``building blocks'' of birational closures and birational sheets.
Recall Example \ref{eg_psl2} and retain notation therein:  for $\overline{G} = {\rm PSL}_2(\C)$, the Jordan class of regular semisimple elements $J(\tau) = \overline{G} \cdot (\overline{T} \setminus \{\bar{e}\})$ does not consist of all birationally induced conjugacy classes.
This implies it is not possible to extend directly constructions and proofs of results in  \S \ref{ss_bcjc} and \S \ref{ss_bs} by requiring that classes are birationally induced in the sense of Definition \ref{defn_birind} (a). 
Nonetheless, the results  in \S \ref{ss_noname} hold for $G$ not necessarily simply connected, therefore we give the following definition as a possible generalization.
\begin{definition} \label{defn_weak}
Let $su \in G$, the \emph{weakly birational closure} of $J(su)$ is:
$$\overline{J(su)}^{wbir} \coloneqq \bigcup_{z \in \wbir(Z(C_G(s)^\circ)^\circ s, \orb^{C_G(s)^\circ}_u)} G \cdot \left( z \ind_{C_G(s)^\circ}^{C_G(z)^\circ} \orb^{C_G(s)^\circ}_u \right).$$
\end{definition}

\begin{remark} \label{rk_wbc_ext}
All results concerning birational closures of Jordan classes for $G$ simply connected from  \S \ref{ss_bcjc} can be restated for weakly birational closures for any connected reductive group $G$. In particular, for $J \in \mathscr{J}(G)$, the weakly birational closure $\overline{J}^{wbir}$  is open  in $\overline{J}$.
\end{remark}

Similarly, for any connected reductive group $G$, it still makes sense to introduce $\mathscr{BB}(G) \subset \mathscr{D}(G)$ as in \eqref{eq_bbg}.
\begin{definition} \label{defn_weakbirsh}
For $\tau \in \mathscr{BB}(G)$, we call $ \overline{J(\tau)}^{wbir}$ the \emph{weakly birational sheet} associated to $\tau$.
\end{definition}

\begin{remark} \label{rk_wbs_ext}
All results concerning birational sheets proven in  \S \ref{ss_bs} for $G$ simply connected can be restated for weakly birational sheets in the case of any connected reductive group $G$; in particular, any connected reductive group is partitioned into its weakly birational sheets.\footnote{The proofs of the results in \S \ref{ss_bcjc} and \S \ref{ss_bs} easily adapt to this case: one needs to substitute all centralizers of semisimple elements with their identity component.}
\end{remark}

\section{Local geometry of birational closures} \label{s_locgeom}
We start this section with definitions and results from algebraic geometry which will be useful for our purposes.
Following terminology of \cite[§1.7]{Hess}, two pointed varieties $(X, x)$ and $(Y, y)$ are said to be \emph{smoothly
equivalent} if there exist a pointed variety $(Z, z)$ and two smooth maps $\phi \colon Z \to X
$ and $\psi \colon  Z \to Y$ such that $\phi(z) = x$ and $\psi(z) = y$.
In this case we write $(X,x) \sim_{se} (Y,y)$.
By \cite[Remark 2.1]{KP81}, if $\dim Y = \dim X + d$, then $(X, x) \sim_{se} (Y, y)$ if and only if
$(X \times \mathbb{A}^d , (x, 0))$ and $(Y, y)$ are locally analytically isomorphic.
Smooth equivalence is an equivalence relation on pointed varieties and it preserves
the properties of being unibranch, normal or smooth. 
For any algebraic variety $X$, denote by $X^{an}$ the associated analytic space.

\begin{lemma} \label{lem_normalization}
Let $X$ and $Y$ be complex algebraic varieties with $\dim X = \dim Y + d$. Let $X$ and $Y$ be unibranch at $x \in X$ and at $y \in Y$, respectively. Suppose $(X, x) \sim_{se} (Y, y)$. Let $\psi_X \colon \widetilde{X} \to X$ and $\psi_Y \colon \widetilde{Y} \to Y$ be the normalizations of $X$ and $Y$, respectively. Let $\tilde{x} \in \widetilde{X}$ and $\tilde{y} \in \widetilde{Y}$ with $\psi_X(\tilde{x}) = x$ and $\psi_Y(\tilde{y}) = y$, respectively. Then $(\widetilde{X}, \tilde{x}) \sim_{se} (\widetilde{Y}, \tilde{y})$.
\end{lemma}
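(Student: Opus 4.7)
My plan is to realize the normalization $\widetilde Z$ of $Z$ itself as a fibre product over $X$ (and symmetrically over $Y$), and thereby endow it with smooth maps to $\widetilde X$ and $\widetilde Y$ that carry a common preferred point to $\tilde x$ and $\tilde y$. By hypothesis there is a pointed variety $(Z,z)$ with smooth maps $\phi\colon Z\to X$, $\psi\colon Z\to Y$ satisfying $\phi(z)=x$ and $\psi(z)=y$. Since smooth maps are open and normalization commutes with open immersions, I may shrink $Z$, $X$, $Y$ so that $\phi$ and $\psi$ are surjective without affecting the local nature of the claim; note that $Z$ is irreducible as it is a variety.

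I would set $W:=Z\times_X \widetilde X$ with projections $p_1\colon W\to Z$ and $p_2\colon W\to \widetilde X$. By base change, $p_1$ is finite and surjective while $p_2$ is smooth and surjective. Since smooth morphisms preserve normality, $W$ is normal. Over the normal locus $X^\circ\subset X$ the normalization $\psi_X$ is an isomorphism, so $p_1$ restricts to an isomorphism onto the dense open $\phi^{-1}(X^\circ)\subset Z$. For irreducibility of $W$: the smooth surjection $p_2$ onto the irreducible $\widetilde X$ forces every irreducible component of $W$ to dominate $\widetilde X$ and to have dimension $\dim Z$. A second component would be disjoint from the first by normality of $W$, hence contained in $p_1^{-1}(Z\setminus \phi^{-1}(X^\circ))$; its image under the finite map $p_1$ would then have dimension strictly less than $\dim Z$, which is impossible. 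So $W$ is irreducible, and being normal, finite and birational over $Z$, it is $\widetilde Z$ by the universal property of normalization.

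Running the symmetric construction yields $W':=Z\times_Y\widetilde Y\cong\widetilde Z$ as $Z$-schemes, together with a smooth projection $p_2'\colon\widetilde Z\to\widetilde Y$. For the base points: unibranchness of $X$ at $x$ gives $\psi_X^{-1}(x)=\{\tilde x\}$, so the fibre $p_1^{-1}(z)\subset\{z\}\times\psi_X^{-1}(x)$ consists of the single point $\tilde z:=(z,\tilde x)$, and by construction $p_2(\tilde z)=\tilde x$. The analogous argument for $W'$ shows that the unique point of $W'$ above $z$ is $(z,\tilde y)$, and under the canonical identification $W\cong W'$ over $Z$ this is the image of $\tilde z$; hence $p_2'(\tilde z)=\tilde y$. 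The pointed variety $(\widetilde Z,\tilde z)$ together with the smooth maps $p_2$ and $p_2'$ then witnesses $(\widetilde X,\tilde x)\sim_{se}(\widetilde Y,\tilde y)$.

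The step I expect to require the most care is the identification $W\cong \widetilde Z$: normality comes from the general principle that smooth morphisms preserve normality; irreducibility is the disjointness-plus-dimension argument above; birationality of $p_1$ follows from density of $X^\circ$ in $X$ together with flatness of $\phi$. Everything else is a bookkeeping check in which unibranchness plays exactly the role of ensuring that $\psi_X^{-1}(x)$ and $\psi_Y^{-1}(y)$ are singletons, so that the distinguished points lift uniquely to the fibre products.
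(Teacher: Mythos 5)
Your proposal is correct, but it takes a genuinely different route from the paper. The paper argues analytically: by the Kraft--Procesi reformulation recalled at the beginning of Section 5, the hypothesis $(X,x)\sim_{se}(Y,y)$ with $\dim X=\dim Y+d$ means that $(X^{an},x)$ and $(Y^{an}\times\mathbb{A}^d,(y,0))$ are locally analytically isomorphic; it then invokes Kuhlmann's theorem that the analytic normalization of $X^{an}$ is $\widetilde{X}^{an}$, so the local analytic isomorphism lifts to the normalizations at $\tilde{x}$ and $(\tilde{y},0)$ (unibranchness pinning down these preimages), and one concludes by the same reformulation. You instead stay entirely inside algebraic geometry: you prove in effect that normalization commutes with smooth base change, identifying $Z\times_X\widetilde{X}$ (and symmetrically $Z\times_Y\widetilde{Y}$) with $\widetilde{Z}$ --- normality from smoothness over a normal base, irreducibility from the disjointness of components of a normal variety plus the dimension count through the finite map to $Z$, birationality from the normal locus of $X$ --- and you use unibranchness only to see that the unique point of $\widetilde{Z}$ over $z$ maps to $\tilde{x}$ under one projection and to $\tilde{y}$ under the other. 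Each step checks out (the key identification is a standard fact), so the pair of smooth projections from $(\widetilde{Z},\tilde{z})$ does witness $(\widetilde{X},\tilde{x})\sim_{se}(\widetilde{Y},\tilde{y})$. What the two approaches buy: yours avoids the analytic category and Kuhlmann's theorem altogether and never uses the numerical hypothesis $\dim X=\dim Y+d$, which in the paper serves only to apply the Kraft--Procesi criterion; the paper's proof is shorter on the page because that analytic criterion is already set up and is used again elsewhere in the local analysis.

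Two minor points of hygiene rather than gaps. Your aside that ``$Z$ is irreducible as it is a variety'' is a convention; if reducible varieties are allowed, first shrink $X$ and $Y$ around $x$ and $y$ (unibranchness gives a unique irreducible component through each point) so that they are irreducible, and note that your argument needs $Z$ irreducible as written --- in the paper's applications all witnesses are irreducible, so this is harmless, but it should be said. Second, the preliminary shrinking to make $\phi$ and $\psi$ surjective is not actually needed: openness of smooth maps already guarantees that $\phi^{-1}(X^\circ)$ is a dense open subset of $Z$, which is all your birationality and irreducibility arguments use.
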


\begin{proof}
By assumption, $(X^{an}, x)$ and $(Y^{an} \times \mathbb{A}^d, (y,0))$ are locally isomorphic as analytic pointed spaces.
Let $\widetilde{X^{an}}$ (resp. $\widetilde{Y^{an}}$) be the normalization of $X^{an}$ (resp. of $Y^{an}$).
 By \cite[§5, Satz 4]{Kuhlmann1961}, we have $\widetilde{X^{an}} = \widetilde{X}^{an}$ and $\widetilde{Y^{an}} = \widetilde{Y}^{an}$. Thus, $(\widetilde{X}^{an}, \tilde{x})$ is the analytic normalization of $(X,x)$ and $(\widetilde{Y}^{an} \times \mathbb{A}^d, (\tilde{y},0))$ is the analytic normalization of $(Y \times \mathbb{A}^d, (y,0))$. Hence, $(\widetilde{X}^{an}, \tilde{x})$   and $(\widetilde{Y}^{an} \times \mathbb{A}^d, (\tilde{y},0))$ are locally analytically isomorphic and this concludes the proof.
\end{proof}

Now we make use of the previously introduced instruments to describe the birational closure of a Jordan class around a unipotent element of $G$ connected and reductive.

\begin{lemma} \label{lem_unipot}
Let $L \leq G$ be a Levi subgroup and let $\tau = (L, Z(L)^\circ, \orb^L) \in \mathscr{D}(G)$ and set $J \coloneqq J(\tau)$. Let $\mathfrak{l}:=\Lie(L)$ and $\orb^{\mathfrak{l}} \in \mathcal{N}_\mathfrak{l}/L$ such that $\exp(\orb^{\mathfrak{l}}) = \orb^L$. Let $\mathfrak{J} \coloneqq \mathfrak{J}(\mathfrak{l}, \orb^\mathfrak{l}) \subset \mathfrak{g}$ be defined as in \eqref{eq_deco_class}.
Let $\nu \in \mathcal{N}$ and $v \coloneqq \exp (\nu) \in \mathcal{U}$. Then:
\begin{enumerate}[noitemsep, nolistsep]
\item[(i)]  $v \in \overline{J}^{wbir}$ if and only if $\nu \in \overline{\mathfrak{J}}^{bir}$;
\item[(ii)] if $v \in \overline{J}^{wbir}$, then $(\overline{J}^{wbir}, v) \sim_{se} (\overline{\mathfrak{J}}^{bir}, \nu)$.
\end{enumerate}
\end{lemma}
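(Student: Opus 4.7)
For part (i), note that every element of $\overline{J(\tau)}^{wbir}$ has the form $g(zw)g^{-1}$ for some $g\in G$, $z\in\wbir(Z(L)^\circ,\orb^L)$ and $w\in\ind_L^{C_G(z)^\circ}\orb^L$, with Jordan decomposition $gzg^{-1}\cdot gwg^{-1}$. Since $v$ is unipotent its semisimple part is $1$, forcing $z=1$ (after conjugation). Hence $v\in\overline{J}^{wbir}$ if and only if $1\in\wbir(Z(L)^\circ,\orb^L)$ and $v\in\ind_L^G\orb^L$. Because $C_G(1)=G$ is connected, the definition of $\wbir$ together with Lemma \ref{lem_gsp_conn} identify the first condition with the statement that $\ind_L^G\orb^L$ is birationally induced; by Remark \ref{rk_ind_isogeny}(iii) this in turn corresponds to $\ind_\mathfrak{l}^\mathfrak{g}\orb^\mathfrak{l}$ being birationally induced, i.e.\ to $0\in\bir(\mathfrak{z(l)},\orb^\mathfrak{l})$. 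The $G$-equivariance of $\exp$ on $\mathcal{N}$ shows $v\in\ind_L^G\orb^L$ is equivalent to $\nu\in\ind_\mathfrak{l}^\mathfrak{g}\orb^\mathfrak{l}$. Combining these yields $\nu\in\overline{\mathfrak{J}}^{bir}$.

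For part (ii), the central tool is that $\exp\colon\mathfrak{g}\to G$ is a $C_G(\nu)$-equivariant local analytic isomorphism from a neighbourhood $U$ of $\nu$ in $\mathfrak{g}$ onto a neighbourhood $V$ of $v$ in $G$: indeed $d\exp_\nu$ is given by the series $(1-e^{-\ad\nu})/\ad\nu$ applied to the nilpotent operator $\ad\nu$, hence invertible. I would then upgrade this to a local analytic isomorphism $(\overline{\mathfrak{J}},\nu)\simeq(\overline{J},v)$: for $y=\zeta+\nu'\in\overline{\mathfrak{J}}$ close to $\nu$, the semisimple $\zeta$ commutes with the nilpotent $\nu'$, so $\exp(y)=\exp(\zeta)\exp(\nu')$ is the Jordan decomposition of $\exp(y)$; writing $\overline{\mathfrak{J}}$ (resp.\ $\overline{J}$) as the union of induced classes parametrised by $\zeta\in\mathfrak{z(l)}$ (resp.\ $\exp(\zeta)\in Z(L)^\circ$) and using the correspondence between $\ind_\mathfrak{l}^{\mathfrak{c_g}(\zeta)}\orb^\mathfrak{l}$ and $\ind_L^{C_G(\exp(\zeta))^\circ}\orb^L$ (Remark \ref{rk_ind_isogeny}), this should identify $\overline{\mathfrak{J}}\cap U$ bijectively with $\overline{J}\cap V$ after shrinking. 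Since $\dim\overline{J}=\dim\overline{\mathfrak{J}}$, this already provides a smooth equivalence with $d=0$.

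It then remains to restrict this local isomorphism to the open subvarieties $\overline{\mathfrak{J}}^{bir}\subset\overline{\mathfrak{J}}$ and $\overline{J}^{wbir}\subset\overline{J}$, which are open by Proposition \ref{prop_birjord} and Remark \ref{rk_bircloslie}. For $y=\zeta+\nu'$ close to $\nu$, membership $y\in\overline{\mathfrak{J}}^{bir}$ amounts to $\zeta\in\bir(\mathfrak{z(l)},\orb^\mathfrak{l})$; by Remarks \ref{rk_lie} and \ref{rk_ind_isogeny}(iii) this is equivalent to $\exp(\zeta)\in\wbir(Z(L)^\circ,\orb^L)$, i.e.\ $\exp(y)\in\overline{J}^{wbir}$, so the open subvarieties do correspond. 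The main obstacle is the rigorous identification $\exp\colon\overline{\mathfrak{J}}\cap U\to\overline{J}\cap V$ used in the previous paragraph: here I expect to rely on the local structural results of \cite{ACE}, which in this setting provide the required slice-type comparison between Jordan-class closures at unipotent elements and decomposition-class closures at nilpotent ones, thereby reducing the question to the orbit-level correspondence already supplied by $\exp$.
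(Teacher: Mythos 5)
Your part (i) is essentially the paper's argument, just spelled out in more detail: unipotency of $v$ forces the central parameter to be $1$, so $v\in\overline{J}^{wbir}$ if and only if $v\in\ind_L^G\orb^L$ and $\ind_L^G\orb^L$ is (weakly) birationally induced from $(L,\orb^L)$ --- the two notions coincide for a unipotent class by Remark \ref{rk_coincide} --- and transporting through the Springer isomorphism $\exp|_{\mathcal N}$ (Remark \ref{rk_ind_isogeny}) this is equivalent to $\nu\in\overline{\mathfrak{J}}^{bir}$.

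For (ii), what is actually needed (and what the paper does) consists only of your first and last reductions: $\overline{J}^{wbir}$ is open in $\overline{J}$ by Remark \ref{rk_wbc_ext} (this, not Proposition \ref{prop_birjord}, is the pertinent reference, since the lemma's $G$ is an arbitrary connected reductive group), and $\overline{\mathfrak{J}}^{bir}$ is open in $\overline{\mathfrak{J}}$ by Remark \ref{rk_bircloslie}; open immersions are smooth, so $(\overline{J}^{wbir},v)\sim_{se}(\overline{J},v)$ and $(\overline{\mathfrak{J}},\nu)\sim_{se}(\overline{\mathfrak{J}}^{bir},\nu)$, while the middle comparison $(\overline{J},v)\sim_{se}(\overline{\mathfrak{J}},\nu)$ is exactly \cite[Corollary 5.3]{ACE}, which the paper cites directly. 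Your hands-on attempt to realize this middle step via $\exp$ is the weak point and should be dropped rather than patched. First, elements of $\overline{\mathfrak{J}}$ near $\nu$ have semisimple part only $G$-conjugate to a small element of $\mathfrak{z(l)}$, with unbounded conjugating elements, so the asserted matching of the stratifications under $\exp$ on a neighbourhood is precisely the nontrivial slice-type statement proved in \cite{ACE}; it does not follow from invertibility of ${\rm d}\exp_\nu$ together with compatibility of Jordan decompositions. Second, the claimed equivalence $\zeta\in\bir(\mathfrak{z(l)},\orb^{\mathfrak{l}})\Leftrightarrow\exp(\zeta)\in\wbir(Z(L)^\circ,\orb^L)$ is not true without restriction: $C_G(\exp\zeta)^\circ$ may strictly contain $C_G(\zeta)$ (already when $\exp\zeta\in Z(G)$ with $\zeta\notin\mathfrak{z(g)}$), and then the two conditions concern inductions into different reductive groups; the equivalence holds only for $\zeta$ in a neighbourhood of $0$ where $C_G(\exp\zeta)=C_G(\zeta)$. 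Since you end by appealing to the local results of \cite{ACE} anyway, the clean proof is the three-step composition above, which is the paper's.
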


\begin{proof}
(i) We have $v \in \overline{J}^{wbir}$ if and only if $\orb_v^G = \ind_L^G \orb^L$ is birationally induced from $(L, \orb^L)$ (equiv. weakly birationally induced, see Remark \ref{rk_coincide}), if and only if $\orb^\mathfrak{g}_{\nu} = \ind_\mathfrak{l}^\mathfrak{g} \orb^\mathfrak{l}$ is birationally induced from $(\mathfrak{l}, \orb^\mathfrak{l})$ (see Remark \ref{rk_ind_isogeny} (iii)), i.e. $\nu \in \overline{\mathfrak{J}}^{bir}$.

(ii) For $v \in \overline{J}^{wbir}$, we have $(\overline{J}^{wbir}, v) \sim_{se} (\overline{J}, v)$, since $\overline{J}^{wbir}$ is open in $\overline{J}$ by Remark \ref{rk_wbc_ext}.
Moreover, $ (\overline{J}, v) \sim_{se}  (\overline{\mathfrak{J}}, \nu)$ thanks to \cite[Corollary 5.3]{ACE}.
Now, (i) implies $\nu \in \overline{\mathfrak{J}}^{bir}$, and $(\overline{\mathfrak{J}}, \nu) \sim_{se} (\overline{\mathfrak{J}}^{bir},\nu)$, as $\overline{\mathfrak{J}}^{bir}$ is open in $\overline{\mathfrak{J}}$.
\end{proof}

From this point up to the end of the paper, we will assume $G$ semisimple and simply connected.
We begin with a general result on the fact that the geometry of birational closures is constant along Jordan classes.
\begin{lemma} \label{lem_const_geom}
Let $J'$ and $J$ be Jordan classes in $G$ semisimple simply connected.
Let $J' \subset \overline{J}^{bir}$ and let $rv, r'v' \in J'$.
Then $(\overline{J}^{bir}, rv) \simeq_{se} (\overline{J}^{bir}, r'v')$.
\end{lemma}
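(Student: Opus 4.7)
The strategy is to reduce the claim, via the openness of $\overline{J}^{bir}$ inside $\overline{J}$ and $G$-equivariance, to a Lie-algebraic local model whose shape is determined by $J'$ alone and not by the specific chosen point of $J'$.

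First, I would verify that $\overline{J}^{bir}$ is open in $\overline{J}$: Proposition \ref{prop_birjord} gives openness of $\overline{J}^{bir}$ in $\overline{J}^{reg}$, and $\overline{J}^{reg}$ is open in $\overline{J}$ by lower semi-continuity of orbit dimension. Since an open immersion is smooth, this yields $(\overline{J}^{bir}, x) \sim_{se} (\overline{J}, x)$ for every $x \in \overline{J}^{bir}$, and by transitivity of $\sim_{se}$ the claim reduces to showing $(\overline{J}, rv) \sim_{se} (\overline{J}, r'v')$.

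Next, $\overline{J}$ is $G$-stable, so conjugation by any $g \in G$ is an automorphism of $\overline{J}$ and therefore preserves smooth-equivalence classes of pointed varieties. Writing $J' = J(\tau')$ with $\tau' = (M, Z(M)^\circ z, \orb^M) \in \mathscr{D}(G)$, I would use the presentation $J' = G \cdot ((Z(M)^\circ z)^{reg} \orb^M)$ to bring, up to simultaneous $G$-conjugation, both points into the standard slice $(Z(M)^\circ z)^{reg} \orb^M$, say $rv = tu$ and $r'v' = t'u'$ with $t, t' \in (Z(M)^\circ z)^{reg}$ and $u, u' \in \orb^M$. A further conjugation by $m \in M$ (which centralizes $Z(M)$) arranges $u = u'$, so it suffices to prove $(\overline{J}, tu) \sim_{se} (\overline{J}, t'u)$.

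Finally, I would establish this remaining smooth equivalence by extending \cite[Corollary 5.3]{ACE} (as already invoked in Lemma \ref{lem_unipot}(ii)) from unipotent elements to arbitrary points of $J'$. Since $G$ is semisimple simply connected, Steinberg's theorem gives $C_G(t) = C_G(t)^\circ = M$ and $C_G(t') = M$, so both $tu$ and $t'u$ have the same centraliser $C_M(u)$. An \'etale Luna-type slice at $tu$ expresses $(\overline{J}, tu)$ as smoothly equivalent to $(\overline{\mathfrak{J}(\mathfrak{m}, \orb^{\mathfrak{m}})}, \nu)$, with $\nu \in \orb^{\mathfrak{m}}$ corresponding to $u$ via Springer's isomorphism on the unipotent part; crucially, this model carries no dependence on the choice of $t \in (Z(M)^\circ z)^{reg}$. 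Applying the same description at $t'u$ and transitivity of $\sim_{se}$ gives the result. The main obstacle will be making this uniformity in $t$ rigorous; an alternative route is to argue that the smooth-equivalence class of $(\overline{J}, tu)$ varies constructibly in $t$ along the irreducible variety $(Z(M)^\circ z)^{reg}$ (once $u$ is fixed), so that it must be constant on this connected set.
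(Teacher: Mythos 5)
Your first reduction is exactly the paper's first ingredient: Proposition \ref{prop_birjord} gives that $\overline{J}^{bir}$ is open in $\overline{J}^{reg}$, hence in $\overline{J}$, so $(\overline{J}^{bir},x)\sim_{se}(\overline{J},x)$ at every $x\in\overline{J}^{bir}$ and the statement reduces to $(\overline{J},rv)\sim_{se}(\overline{J},r'v')$. The gap is in how you handle this remaining equivalence. The paper disposes of it by citing \cite[Corollary 4.5]{ACE}, which is precisely the statement that the local geometry of $\overline{J}$ is constant along any Jordan class contained in $\overline{J}$; you instead try to rebuild it, and the rebuild does not go through. First, your proposed slice model $(\overline{\mathfrak{J}(\mathfrak{m},\orb^{\mathfrak{m}})},\nu)$ is determined by the decomposition data of $J'$ alone and carries no reference to $J$, so it cannot be a local model for $(\overline{J},tu)$: the correct statement (recalled in Section \ref{s_locgeom} from \cite[Proposition 4.3]{ACE}) is $(\overline{J},tu)\sim_{se}(\bigcup_\ell t^{-1}\overline{J_\ell},u)$, a union of translated closures of those Jordan classes $J_\ell$ of $C_G(t)$ which lie inside $J$ and contain $tu$ in their closure; this depends essentially on $J$ (otherwise every $G$-stable closure through $J'$ would have the same local model at $tu$, which is false already for $J'=\{1\}$). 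Note also that $\mathfrak{m}=\Lie(C_G(t))$ is in general only the Lie algebra of a pseudo-Levi, so $\mathfrak{J}(\mathfrak{m},\orb^{\mathfrak{m}})$ is not even a decomposition class in the sense of Remark \ref{rk_deco_class}, and \cite[Corollary 5.3]{ACE} only covers unipotent base points, so ``extending'' it to arbitrary points is not a routine step but essentially the content of \cite[Proposition 4.3 and Corollary 4.5]{ACE}.

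Second, the uniformity in $t$ that you flag as ``the main obstacle'' is the whole lemma after your reduction, and your fallback argument does not close it: knowing that the smooth-equivalence class of $(\overline{J},tu)$ varies constructibly in $t$ on the irreducible set $(Z(M)^\circ z)^{reg}$ and that this set is connected does not force constancy, since a connected variety can perfectly well be partitioned into finitely many non-empty constructible pieces (a line is a point plus its complement). What one needs is an openness or local-triviality statement for $\overline{J}$ along $J'$ (an \'etale/analytic product decomposition in the transverse direction), and that is exactly what \cite[Corollary 4.5]{ACE} supplies. So either invoke that result directly, as the paper does, or supply the slice-theoretic local triviality argument in full; as written, the proof is incomplete at its central step.
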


\begin{proof}
This follows from Proposition \ref{prop_birjord} and \cite[Corollary 4.5]{ACE}, as  $\overline{J}^{bir}$ is open in $\overline{J}$.
\end{proof}

In the remainder, we will switch from Jordan classes in $G$ to Jordan classes in a reductive subgroup $M \leq G$ and back. We will use the blackboard bold typeface $\mathbb{J}$ to denote a Jordan class in $G$ and the usual typeface $J$ to denote Jordan classes in $M$. The (regular, resp. birational) closure of $\mathbb{J}$ will be implied to be performed in the group $G$ and the (regular, resp. weakly birational) closure of $J$ will be implied to be performed in the group $M$, therefore we will use previously introduced notations ($\overline{\mathbb{J}}, \overline{\mathbb{J}}^{reg}, \overline{\mathbb{J}}^{bir}, \overline{J}, \overline{J}^{wbir}$, \dots).

Let $su \in G$, with $s \in T$ and let $\mathbb{J} \coloneqq \mathbb{J}(su) \in \mathscr{J}(G)$.
We set the following notation: $C \coloneqq C_G(s) = C_G(s)^\circ$ and $Z \coloneqq Z(C)$.
Let $rv \in \overline{\mathbb{J}}$ with $r \in Z^\circ s \subset T$ and let
$M \coloneqq C_G(r) = C_G(r)^\circ$. Since $r \in Z^\circ s$, we have that $C$ is a Levi subgroup of $M$ by Lemma \ref{lem_itsalevi}.
Set $\mathcal{J}_{\mathbb{J},rv} \coloneqq \{J_\ell \in \mathscr{J}(M) \mid J_\ell \subset \mathbb{J} \mbox{ and } rv \in \overline{J_\ell} \}$.

The result in \cite[Proposition 4.3]{ACE} states that if $rv \in \overline{\mathbb{J}}$, then $(\overline{\mathbb{J}}, rv) \sim_{se} (\bigcup_{\mathcal{J}_{\mathbb{J},rv}} r^{-1} \overline{J}_\ell, v)$.
Moreover, $rv \in \overline{\mathbb{J}}^{reg}$ if and only if $rv \in \overline{J}_\ell^{reg}$ for all $J_\ell \in \mathcal{J}_{\mathbb{J},rv}$, and in this case $(\overline{\mathbb{J}}^{reg}, rv) \sim_{se} (\bigcup_{\mathcal{J}_{\mathbb{J},rv}} r^{-1}\overline{J}^{reg}_\ell, v)$.
Finally, if $\overline{\mathbb{J}}^{reg}$ is a sheet in $G$, then all $\overline{J}_\ell^{reg} \in \mathcal{J}_{\mathbb{J},rv}$ are sheets in $M$, see \cite[Remark 6.4 (2)]{ACE}. 

Suppose now that $rv \in \overline{\mathbb{J}}^{bir}$. We will prove that $(\overline{\mathbb{J}}^{bir}, rv)$ and $(\bigcup_{\mathcal{J}_{\mathbb{J},rv}} \overline{J}_\ell^{wbir}, rv)$ are smoothly equivalent.

\begin{lemma} \label{lem_bir_smeq}
Let $G$ be semisimple simply connected, let $su \in G$ with $s \in T$ and $\mathbb{J}(su) \eqqcolon \mathbb{J} \in \mathscr{J}(G)$ and let $rv \in \overline{\mathbb{J}}$.
Then $rv \in \overline{\mathbb{J}}^{bir}$ if and only if $rv \in \overline{J}_\ell^{wbir}$ for all $J_\ell \in \mathcal{J}_{\mathbb{J},rv}$.
In this case,
$$(\overline{\mathbb{J}}^{bir}, rv) \sim_{se} \left( \bigcup_{\mathcal{J}_{\mathbb{J},rv}} r^{-1} \overline{J}_\ell^{wbir}, v \right).$$
\end{lemma}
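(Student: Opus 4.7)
The strategy is to upgrade the smooth equivalence $(\overline{\mathbb{J}}, rv) \sim_{se} (\bigcup_{\mathcal{J}_{\mathbb{J},rv}} r^{-1}\overline{J}_\ell, v)$ of \cite[Proposition 4.3]{ACE} by restricting it to the open subsets $\overline{\mathbb{J}}^{bir} \subset \overline{\mathbb{J}}$ and $\bigcup_{\mathcal{J}_{\mathbb{J},rv}} r^{-1}\overline{J}_\ell^{wbir} \subset \bigcup_{\mathcal{J}_{\mathbb{J},rv}} r^{-1}\overline{J}_\ell$. Openness of the former follows from Proposition \ref{prop_birjord} combined with openness of $\overline{\mathbb{J}}^{reg}$ in $\overline{\mathbb{J}}$; openness of the latter follows from Remark \ref{rk_wbc_ext}, which gives the analogous statement inside the connected reductive but not necessarily simply connected group $M = C_G(r)$---this is precisely why the weakly birational notion must appear on the $M$-side.

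The crucial step is the biconditional. By Lemma \ref{lem_itsalevi}, $C$ is a Levi subgroup of $M$, and for each $J_\ell \in \mathcal{J}_{\mathbb{J}, rv}$ the condition $J_\ell \subset \mathbb{J}$ forces (after $M$-conjugation) the $M$-decomposition datum of $J_\ell$ to involve the Levi $C$ and the unipotent class $\orb^C_u$, differing only in the admissible connected component of $Z(C)$ containing its semisimple part. Applying Definition \ref{defn_weak} together with \eqref{eq_wcentre} then translates $rv \in \overline{J}_\ell^{wbir}$ into the birationality of $\ind_C^{C_M(r)^\circ}\orb^C_u = \ind_C^M \orb^C_u$ from $(C, \orb^C_u)$, a statement independent of $\ell$. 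On the other hand, Definition \ref{defn_birclos} unfolds $rv \in \overline{\mathbb{J}}^{bir}$ into the condition $r \in \bir(Z^\circ s, \orb^C_u)$; by Remark \ref{rk_centre}, the simple connectedness of $G$ makes this coincide with $r \in \wbir(Z^\circ s, \orb^C_u)$, which is again the birationality of the same induction. Thus both sides of the iff reduce to a single intrinsic statement about $\ind_C^M \orb^C_u$.

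Once the biconditional is established, the smooth equivalence follows by a standard transitivity argument: under the common condition, the open inclusions $\overline{\mathbb{J}}^{bir} \hookrightarrow \overline{\mathbb{J}}$ and $\bigcup_{\mathcal{J}_{\mathbb{J},rv}} r^{-1}\overline{J}_\ell^{wbir} \hookrightarrow \bigcup_{\mathcal{J}_{\mathbb{J},rv}} r^{-1}\overline{J}_\ell$ yield the smooth equivalences $(\overline{\mathbb{J}}^{bir}, rv) \sim_{se} (\overline{\mathbb{J}}, rv)$ and $(\bigcup r^{-1}\overline{J}_\ell, v) \sim_{se} (\bigcup r^{-1}\overline{J}_\ell^{wbir}, v)$ (since open immersions are smooth); combined with the ACE equivalence and transitivity of $\sim_{se}$, this produces the desired $(\overline{\mathbb{J}}^{bir}, rv) \sim_{se} (\bigcup_{\mathcal{J}_{\mathbb{J},rv}} r^{-1}\overline{J}_\ell^{wbir}, v)$.

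The main obstacle I anticipate is the explicit description of the $M$-decomposition data of each $J_\ell$ and the verification that across all the branches the $\wbir$-condition reduces to the same single birationality statement; once this is in place via Lemma \ref{lem_gsp_conn} and Proposition \ref{prop_trans}, the remainder is essentially formal.
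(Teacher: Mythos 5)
Your final transitivity step (restricting the ACE smooth equivalence to the open subsets $\overline{\mathbb{J}}^{bir}\subset\overline{\mathbb{J}}$ and $\overline{J}_\ell^{wbir}\subset\overline{J}_\ell$, once one knows $rv$ lies in all of them) is essentially the paper's argument and is fine. The gap is in your treatment of the biconditional. You claim that $J_\ell\subset\mathbb{J}$ forces, \emph{after $M$-conjugation}, the $M$-decomposition datum of every $J_\ell\in\mathcal{J}_{\mathbb{J},rv}$ to be of the form $(C, Z(C)^\circ z_\ell,\orb^C_u)$, differing only in the component of $Z(C)$. This is false in general: since $rv\in\overline{J}_\ell$ and $r\in Z(M)$ is fixed by $M$-conjugation, the relevant component would have to contain $r$, hence would be $Z(C)^\circ s$ itself, and your claim would force $\mathcal{J}_{\mathbb{J},rv}=\{J(\tau)\}$ for \emph{every} Jordan class --- but the whole point of the local analysis (and of Theorem \ref{thm_only_one}, whose proof needs birational rigidity and the partition of $M$ into weakly birational sheets to conclude $\lvert\mathcal{J}_{\mathbb{J},rv}\rvert=1$) is that several branches $J_\ell$ can occur. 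What is actually true (via \cite[Theorem 4.4]{ACE}) is that $J_\ell=J(\tau_\ell)$ with $\tau_\ell=(C_\ell,Z(C_\ell)^\circ z_\ell,\orb^{C_\ell})\in\mathscr{D}(M)$, where $C_\ell$ is a Levi of $M$ (Lemma \ref{lem_itsalevi}) and a pseudo-Levi of $G$, and the triples $\tau_\ell$ are only \emph{$G$-conjugate} to $\tau$, i.e.\ $\mathbb{J}(\tau_\ell)=\mathbb{J}$; $G$-conjugate Levi subgroups of $M$ need not be $M$-conjugate, so birationality of $\ind_{C_\ell}^{M}\orb^{C_\ell}$ is not ``the same statement, independent of $\ell$'' by conjugation alone. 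The paper resolves this by passing through the $G$-level: $rv\in\overline{\mathbb{J}}^{bir}=\overline{\mathbb{J}(\tau_\ell)}^{bir}$ for each $\ell$ (well-definedness of the birational closure under $G$-conjugacy of data, Lemma \ref{lem_wdbirclos}), and then simple connectedness (Remark \ref{rk_coincide}) converts this into the weakly birational condition in $M$ for that particular $\ell$; the converse only uses the single class $J(\tau)$ and $G$-saturation.

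A secondary, smaller gap: even for the single datum $\tau$, the step ``$rv\in\overline{\mathbb{J}}^{bir}$ unfolds into $r\in\bir(Z^\circ s,\orb^C_u)$'' is not just Definition \ref{defn_birclos}: membership a priori produces some witness $z\in\bir(Z^\circ s,\orb^C_u)$ with $\orb^G_{rv}=G\cdot(z\ind_C^{C_G(z)}\orb^C_u)$, and one must argue that the conjugating element can be chosen compatibly with the triple (as in the injectivity part of Theorem \ref{thm_unibirind}, or via Lemma \ref{lem_wdbirclos}) before replacing $z$ by $r$. Without repairing these two points the ``for all $J_\ell$'' direction of the equivalence, which is exactly what makes the union $\bigcup_\ell r^{-1}\overline{J}_\ell^{wbir}$ a neighbourhood of $v$ in $\bigcup_\ell r^{-1}\overline{J}_\ell$, is not established.
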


\begin{proof}
We can assume $r \in Z^\circ s$.
Denote with $$\mathcal{A}_{\mathbb{J},rv} \coloneqq \{(C_\ell, Z(C_\ell)^\circ z_\ell, \orb^{C_\ell}) \in \mathscr{D}(M) \mid r \in Z(C_\ell)^\circ z_\ell \mbox{ and } \orb^M_v \subset \overline{\ind_{C_\ell}^{M} \orb^{C_\ell}} \}.$$

If $J_\ell \in \mathcal{J}_{\mathbb{J},rv}$, there exists $\tau_\ell \in \mathcal{A}_{\mathbb{J},rv}$ 
such that $J(\tau_\ell) = J_\ell$, see \cite[Theorem 4.4]{ACE}.
Note that $\tau \coloneqq (C, Z^\circ s, \orb^C_u) \in \mathcal{A}_{\mathbb{J},rv}$.
By Lemma \ref{lem_itsalevi}, since $r \in Z(C_\ell)^\circ z_\ell$, we have that $C_\ell$ is a Levi subgroup of $M$.
Hence, by \cite[Lemma 3.10]{CE1} $C_\ell$ is a pseudo-Levi of $G$, so that $\tau_\ell \in \mathscr{D}(G)$.
By definition of $\mathcal{J}_{\mathbb{J},rv}$, we have that $\mathbb{J}(\tau_\ell) = \mathbb{J}$ for every $\tau_\ell \in \mathcal{A}_{\mathbb{J},rv}$.
Suppose that $rv \in \overline{\mathbb{J}}^{bir}$.
Then, $rv \in \overline{\mathbb{J}(\tau_\ell)}^{bir}$ for all $\tau_\ell \in \mathcal{A}_{\mathbb{J},rv}$, i.e.
$$G \cdot (r \orb^M_v) \subset \bigcup_{z \in \bir(Z(C_\ell)^\circ z_\ell, \orb^{C_\ell})} 
G \cdot (z \ind_{C_\ell}^{{C_G(z)}} \orb^{C_\ell}).$$
Since $G$ is simply connected and $r \in Z^\circ s$, this is equivalent to $r \orb^M_v = \orb^M_{rv}$ being birationally induced from $(C_\ell, r, \orb^{C_\ell}) \in \mathscr{B}(M)$ (equiv. weakly birationally induced, see Remark \ref{rk_coincide}) for all $\tau_\ell \in \mathcal{A}_{\mathbb{J},rv}$, i.e. $rv \in \overline{J}^{wbir}_\ell$, for all $\tau_\ell \in \mathcal{A}_{\mathbb{J},rv}$.
For the other implication, remark that $J(\tau) \in \mathcal{J}_{\mathbb{J},rv}$, and if $\orb^M_{rv} \subset \overline{J(\tau)}^{wbir}$, then $\orb^G_{rv} = G \cdot (\orb^M_{rv}) \subset \overline{\mathbb{J}}^{bir}$.

For the last part of the statement, let $rv \in \overline{\mathbb{J}}^{bir} \subset \overline{\mathbb{J}}^{reg}$.
Observe that $r \in Z(M)$ implies $r^{-1}\overline{J}_\ell^{wbir} = \overline{r^{-1}J}_\ell^{wbir}$.
By \cite[Proposition 4.3]{ACE}, $(\overline{\mathbb{J}}^{reg}, rv) \sim_{se} (\bigcup_{\mathcal{J}_{\mathbb{J},rv}} r^{-1}\overline{J}^{reg}_\ell, v)$.
Since $\overline{\mathbb{J}}^{bir}$ is open in $\overline{\mathbb{J}}^{reg}$, we have $(\overline{\mathbb{J}}^{reg}, rv) \sim_{se} (\overline{\mathbb{J}}^{bir}, rv)$;
similarly each $\overline{J}_\ell^{wbir}$ is open in $\overline{J}_\ell^{reg}$ (see Remark \ref{rk_wbc_ext}), hence $(\bigcup_{\mathcal{J}_{\mathbb{J},rv}} \overline{J}^{reg}_\ell,rv) \sim_{se} (\bigcup_{\mathcal{J}_{\mathbb{J},rv}} \overline{J}^{wbir}_\ell, rv)$.
Transitivity yields: $(\overline{\mathbb{J}}^{bir}, rv) \sim_{se}(\bigcup_{\mathcal{J}_{\mathbb{J},rv}} \overline{J}_\ell^{wbir}, rv) \sim_{se}(\bigcup_{\mathcal{J}_{\mathbb{J},rv}} r^{-1}\overline{J}_\ell^{wbir}, v)$. 
\end{proof}

We prove the main result of this section, which reduces the local study of a birational sheet in $G$ semisimple simply connected to the study of a neighbourhood of a unipotent element in some connected reductive subgroup of $G$.

\begin{theorem} \label{thm_only_one}
Let $G$ be semisimple simply connected and $\tau = (C, Z^\circ s, \orb^C) \in \mathscr{BB}(G)$. For $rv \in \overline{\mathbb{J(\tau)}}^{bir}$ with $r \in Z^\circ s$, we have
$$\left( \overline{\mathbb{J(\tau)}}^{bir}, rv \right) \sim_{se} \left( r^{-1}\overline{J(\tau)}^{wbir}, v \right).$$
\end{theorem}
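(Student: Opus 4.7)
The natural starting point is Lemma~\ref{lem_bir_smeq}, which already records the smooth equivalence
\[
\bigl(\overline{\mathbb{J}(\tau)}^{bir},\,rv\bigr)\;\sim_{se}\;\Bigl(\,\bigcup_{J_\ell \in \mathcal{J}_{\mathbb{J}(\tau),rv}} r^{-1}\overline{J_\ell}^{wbir},\;v\,\Bigr).
\]
The whole game is to prove that, under the sharper hypothesis $\tau \in \mathscr{BB}(G)$, this union on the right collapses to the single set $r^{-1}\overline{J(\tau)}^{wbir}$; once established, the theorem is immediate.

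I would first argue that \emph{each} term $\overline{J_\ell}^{wbir}$ appearing in the union is an entire weakly birational sheet of $M$. By the proof of Lemma~\ref{lem_bir_smeq}, $J_\ell = J(\tau_\ell)$ for a decomposition datum $\tau_\ell = (C_\ell,Z(C_\ell)^{\circ}z_\ell,\orb^{C_\ell}) \in \mathscr{D}(M)\cap \mathscr{D}(G)$ with $\mathbb{J}(\tau_\ell)=\mathbb{J}(\tau)$ in $G$; the latter equality means exactly that $\tau_\ell$ is $G$-conjugate to $\tau$ in $\mathscr{D}(G)$. Since $\orb^{C}$ is birationally rigid (this is the force of $\tau\in\mathscr{BB}(G)$), Remark~\ref{rk_bir_rig} transfers birational rigidity to $\orb^{C_\ell}$, so $\tau_\ell \in \mathscr{BB}(M)$ and, by Definition~\ref{defn_weakbirsh}, $\overline{J_\ell}^{wbir}$ is a weakly birational sheet of $M$.

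Next I would invoke the partition property in Remark~\ref{rk_wbs_ext}: the weakly birational sheets of $M$ partition $M$ and correspond bijectively to $\mathscr{BB}(M)/M$. Lemma~\ref{lem_bir_smeq} puts $rv$ in every $\overline{J_\ell}^{wbir}$, so these sheets must all coincide; the bijection then forces all the $J_\ell$'s to be the same $M$-Jordan class $J^{\ast}$. Consequently $\mathcal{J}_{\mathbb{J}(\tau),rv}=\{J^{\ast}\}$ and the right-hand side in Lemma~\ref{lem_bir_smeq} is simply $(r^{-1}\overline{J^{\ast}}^{wbir},\,v)$. It remains to identify $J^{\ast}$ with $J(\tau)$, where $\tau$ is now read as a datum in $\mathscr{D}(M)$. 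Note that $\tau \in \mathscr{BB}(M)$: $C$ is a Levi of $M$, the identity $C_{M}(Z^{\circ}s)^{\circ}=C_{G}(Z^{\circ}s)^{\circ}\cap M=C$ shows $Z^{\circ}s$ satisfies \eqref{RP} for $C$ in $M$, and $\orb^{C}$ is birationally rigid. Hence $\overline{J(\tau)}^{wbir}$ is a weakly birational sheet of $M$, and by the partition argument above the identification $J^{\ast}=J(\tau)$ will follow as soon as $rv \in \overline{J(\tau)}^{wbir}$. To obtain this last containment I would combine Theorem~\ref{thm_unibirind} with Lemma~\ref{lem_gsp_conn}: the definition of $\overline{\mathbb{J}(\tau)}^{bir}$ exhibits $\orb^{G}_{rv}$ as birationally induced from $(C, z, \orb^{C})$ for some $z \in \bir(Z^{\circ}s,\orb^{C})$, and transporting the semisimple part to $r$ by $G$-conjugacy, together with Lemma~\ref{lem_gsp_conn}, converts this into a weakly birational induction datum in $M$ which, up to $M$-conjugacy, is $(C, r, \orb^{C})$. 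That places $\orb^{M}_{rv} \subset \overline{J(\tau)}^{wbir}$, as wanted.

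The main technical difficulty I expect sits in this last identification: namely, that after conjugating the birational induction triple from $G$ so that its semisimple part becomes $r$, the resulting Levi can genuinely be taken to be $C$ itself (rather than some $G$-conjugate Levi of $M$ that fails to be $M$-conjugate to $C$). Uniqueness up to $M$-conjugacy provided by Theorem~\ref{thm_unibirind} in $M$, combined with the fact that $r \in Z^{\circ}s \subset Z(C)$ and with the birational rigidity built into $\tau \in \mathscr{BB}(G)$, should make this compatibility work, but it is the delicate point of the proof.
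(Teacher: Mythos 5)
Your proposal follows the paper's own argument essentially verbatim: start from Lemma \ref{lem_bir_smeq}, transfer birational rigidity from $\orb^C$ to each $\orb^{C_\ell}$ via Remark \ref{rk_bir_rig} so that every $\overline{J_\ell}^{wbir}$ is a weakly birational sheet of $M$ containing $rv$, and then use the partition of $M$ into weakly birational sheets (Remark \ref{rk_wbs_ext}) to collapse $\mathcal{J}_{\mathbb{J},rv}$ to a single class. The ``delicate point'' you single out is in fact unnecessary: the proof of Lemma \ref{lem_bir_smeq} already records that $\tau$, read as a datum in $\mathscr{D}(M)$, satisfies $J(\tau) \in \mathcal{J}_{\mathbb{J},rv}$, so once all members of $\mathcal{J}_{\mathbb{J},rv}$ are forced to coincide the singleton is automatically $\{J(\tau)\}$ (and $rv \in \overline{J(\tau)}^{wbir}$ comes straight from the equivalence in that lemma), with no need for the detour through Theorem \ref{thm_unibirind} and Lemma \ref{lem_gsp_conn}.
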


\begin{proof}
The statement follows from Lemma \ref{lem_bir_smeq}, provided that $\lvert \mathcal{J}_{\mathbb{J},rv} \rvert = 1$ if $\overline{\mathbb{J}}^{bir}$ is a birational sheet.
Since $\orb^C$ is birationally rigid in $C$, also $ \orb^{C_\ell} $ is birationally rigid in $C_\ell$ for all $\ell$ such that $J_\ell \in \mathcal{J}_{\mathbb{J},rv}$, as $(C, \orb^C)$ and $(C_\ell, \orb^{C_\ell})$ are conjugate in $G$ (see Remark \ref{rk_bir_rig}).
Thus each $\overline{J_\ell}^{wbir}$ is a  weakly birational sheet of $M$ containing $rv$.
By Remark \ref{rk_wbs_ext}, $M$ is partitioned into its weakly birational sheets, hence  $\mathcal{J}_{\mathbb{J},rv} = \{J(\tau)\}$.
\end{proof}

We conclude with three results: the first one compares the local structures of a birational sheet of $G$ semisimple simply connected and of a birational sheet in a reductive Lie subalgebra of $\mathfrak{g}$, the last two are applications.

\begin{theorem}\label{thm_locbirsheet}
Let $G$ be semisimple simply connected, let $\tau = (C, Z^\circ s, \orb^C) \in \mathscr{BB}(G)$ and let $rv \in \overline{\mathbb{J(\tau)}}^{bir}$ with $r \in Z^\circ s$.
Then $$(\overline{\mathbb{J}(\tau)}^{bir}, rv) \sim_{se} (\overline{\mathfrak{J}_{\mathfrak{c_g}(r)}(\mathfrak{c}, \orb^{\mathfrak{c}})}^{bir}, \nu)$$
where $\mathfrak{c} \coloneqq \Lie(C)$, the orbit $\orb^\mathfrak{c} \in \mathcal{N}_\mathfrak{c}/C$ satisfies $\exp(\orb^{\mathfrak{c}}) = \orb^C$, the set $\overline{\mathfrak{J}_{\mathfrak{c_g}(r)}(\mathfrak{c}, \orb^{\mathfrak{c}})}^{bir}$ is a birational sheet in $\mathfrak{c}_\mathfrak{g}(r)$ and $\nu \in \mathcal{N}_{\mathfrak{c_g}(r)}$ is such that $\exp(\nu) = v$. 
\end{theorem}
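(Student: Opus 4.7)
The plan is to decompose the desired smooth equivalence into two stages linked by the intermediate pointed variety $(r^{-1}\overline{J(\tau)}^{wbir}, v)$ inside the reductive subgroup $M \coloneqq C_G(r)^\circ$. The first stage is given directly by Theorem \ref{thm_only_one}, while the second stage will be an application of Lemma \ref{lem_unipot} in the ambient group $M$, performed after left translation by the central element $r^{-1}$ so that the relevant Jordan class of $M$ has trivial semisimple coset $Z(C)^\circ$.

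Concretely, I would first invoke Theorem \ref{thm_only_one} to obtain
\[
\left(\overline{\mathbb{J}(\tau)}^{bir},\, rv\right) \;\sim_{se}\; \left(r^{-1}\overline{J(\tau)}^{wbir},\, v\right),
\]
where $J(\tau)$ is now regarded as the Jordan class in $M$ associated to $\tau = (C, Z^\circ s, \orb^C) \in \mathscr{D}(M)$; this datum is valid in $M$ because $r \in Z^\circ s$, so by Lemma \ref{lem_itsalevi} $C$ is a Levi subgroup of $M = C_G(r)^\circ$. Since $r \in T \subseteq M$, we have $r \in Z(M)$. A parallel application of Lemma \ref{lem_itsalevi}, combined with the equality $C_{L(C)}(s)^\circ = C$ (where $L(C)$ is the Levi envelope of $C$ in $G$, and $C$ is a pseudo-Levi of $L(C)$ via $s$), yields $C_M(Z^\circ)^\circ = C$; hence $\tau' \coloneqq (C, Z^\circ, \orb^C) \in \mathscr{D}(M)$ is also a valid decomposition datum, defining a Jordan class $J(\tau') \in \mathscr{J}(M)$.

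Next I would establish the identification
\[
r^{-1}\,\overline{J(\tau)}^{wbir} \;=\; \overline{J(\tau')}^{wbir}
\]
by unwinding the union expression of Section \ref{ss_noname}. Left multiplication by $r^{-1}$ commutes with $M$-conjugation (as $r \in Z(M)$), and the substitution $w \mapsto r^{-1}w$ is a bijection $Z^\circ s \to Z^\circ$ for which $C_M(w) = C_M(r^{-1}w)$. Consequently, the parameter set $\wbir(Z^\circ s, \orb^C)$ (taken with $M$ as ambient group) is in bijection with $\wbir(Z^\circ, \orb^C)$ in such a way that the two weakly birational closures match term-by-term after translation.

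With this identification at hand, I would apply Lemma \ref{lem_unipot}(ii) in the connected reductive group $M$, with Levi subgroup $C$, decomposition datum $\tau'$, and unipotent element $v = \exp(\nu)$, to obtain
\[
\left(\overline{J(\tau')}^{wbir},\, v\right) \;\sim_{se}\; \left(\overline{\mathfrak{J}_{\mathfrak{c_g}(r)}(\mathfrak{c}, \orb^\mathfrak{c})}^{bir},\, \nu\right),
\]
and transitivity of smooth equivalence closes the argument. To confirm that the right-hand side is a genuine birational sheet of $\mathfrak{c_g}(r)$, note that $\tau \in \mathscr{BB}(G)$ forces birational rigidity of $\orb^C$ in $C$, which by Remark \ref{rk_bir_rig} is equivalent to birational rigidity of $\orb^\mathfrak{c}$ in $\mathfrak{c}$; the conclusion then follows from Remark \ref{rk_bircloslie}. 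The main technical subtlety is the translation identity displayed above: although it is morally just the action of a central element, it requires carefully matching the parameter sets $\wbir(\cdot, \orb^C)$ under the substitution $w \mapsto r^{-1}w$, which hinges on $r \in Z(M)$ and on the identity $C_M(w) = C_M(r^{-1}w)$. Once this is in place, the theorem is essentially the concatenation of two previously established smooth equivalences.
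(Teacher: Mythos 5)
Your argument is correct and follows essentially the same route as the paper's own proof, which deduces the statement from Theorem \ref{thm_only_one}, Lemma \ref{lem_unipot} and Remark \ref{rk_bir_rig}. The only difference is that you make explicit the translation identity $r^{-1}\overline{J(\tau)}^{wbir} = \overline{J(\tau')}^{wbir}$ with $\tau' = (C, Z^\circ, \orb^C) \in \mathscr{D}(M)$, a step the paper leaves implicit, and your verification of it (using $r \in Z(M)$ and $C_M(w) = C_M(r^{-1}w)$) is sound.
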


\begin{proof}
This follows from Theorem \ref{thm_only_one}, Lemma \ref{lem_unipot} and the fact that $\orb^C \in \mathcal{U}_C/C$ is birationally rigid if and only if $\orb^\mathfrak{c} \in \mathcal{N}_\mathfrak{c}/C$ is so by Remark \ref{rk_bir_rig}.
\end{proof}

\begin{theorem} \label{thm_unibranch}
Let $G$ be semisimple simply connected. Let $\tau = (C, Z^\circ s, \orb^C) \in \mathscr{BB}(G)$. Then:
\begin{enumerate}[noitemsep, nolistsep]
\item[(i)] $\overline{\mathbb{J}(\tau)}^{bir}$ is unibranch;
\item[(ii)] the normalization of $\overline{\mathbb{J}(\tau)}^{bir}$ is smooth.
\end{enumerate}
\end{theorem}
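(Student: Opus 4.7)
The plan is to reduce both assertions to the corresponding statements for birational sheets in a reductive Lie algebra---which are theorems of Losev \cite[§4]{Lo2016}---by transporting them across the smooth equivalence supplied by Theorem \ref{thm_locbirsheet}. Being unibranch and having smooth normalization are local, $G$-invariant properties, so I would verify them at arbitrary points $rv \in \overline{\mathbb{J}(\tau)}^{bir}$ with $r \in Z^\circ s$; every point of $\overline{\mathbb{J}(\tau)}^{bir}$ is $G$-conjugate to such an $rv$ by the very definition of birational closure (Definition \ref{defn_birclos}), so this reduction loses nothing.

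Fix such an $rv$ and let $\nu \in \mathcal{N}_{\mathfrak{c_g}(r)}$ with $\exp(\nu)=v$. By Theorem \ref{thm_locbirsheet},
$$(\overline{\mathbb{J}(\tau)}^{bir}, rv) \sim_{se} (\overline{\mathfrak{J}_{\mathfrak{c_g}(r)}(\mathfrak{c}, \orb^{\mathfrak{c}})}^{bir}, \nu),$$
and the right-hand side is genuinely a birational sheet of the reductive Lie algebra $\mathfrak{c_g}(r)$, evaluated at a nilpotent element. Losev's theorem asserts that every birational sheet of a reductive Lie algebra is unibranch with smooth normalization; since being unibranch is preserved under smooth equivalence \cite[§1.7]{Hess}, we conclude that $\overline{\mathbb{J}(\tau)}^{bir}$ is unibranch at $rv$, proving (i).

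For (ii), I would then invoke Lemma \ref{lem_normalization}, which promotes a smooth equivalence of pointed unibranch varieties to a smooth equivalence between their normalizations at compatible lifts of the base points. Applying it to the above smooth equivalence---whose hypotheses are fulfilled thanks to (i)---and combining with the fact that the normalization of $\overline{\mathfrak{J}_{\mathfrak{c_g}(r)}(\mathfrak{c}, \orb^{\mathfrak{c}})}^{bir}$ is smooth above $\nu$ by Losev, and that smoothness is preserved under smooth equivalence, yields smoothness of the normalization of $\overline{\mathbb{J}(\tau)}^{bir}$ above $rv$. Running this at every point of the birational sheet then gives (ii).

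The main substantive ingredient is Losev's theorem, which is taken as a black box; the only genuine work on our side is verifying that the smooth equivalence of Theorem \ref{thm_locbirsheet} is the right conduit and that Lemma \ref{lem_normalization} applies, which both reduce to bookkeeping once (i) is in hand. The mild subtlety is ordering the steps correctly: unibranchness must be established \emph{before} Lemma \ref{lem_normalization} can be invoked, so (i) must precede (ii) rather than being derived together.
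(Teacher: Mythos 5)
Your proposal is correct and follows essentially the same route as the paper: smooth equivalence via Theorem \ref{thm_locbirsheet}, Losev's result \cite[Theorem 4.4 (2)]{Lo2016} for birational sheets in the reductive Lie algebra $\mathfrak{c_g}(r)$ (noting it extends to the reductive case), and Lemma \ref{lem_normalization} for part (ii), with (i) established first exactly as the paper does. The only cosmetic difference is that you spell out the reduction to points $rv$ with $r \in Z^\circ s$, which the paper leaves implicit.
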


\begin{proof}
(i) Let $rv \in \overline{\mathbb{J}(\tau)}^{bir}$. Theorem \ref{thm_locbirsheet}, from which we retain notation, implies $(\overline{\mathbb{J}(\tau)}^{bir}, rv) \sim_{se} (\overline{\mathfrak{J}_{\mathfrak{c_g}(r)} (\mathfrak{c}, \orb^{\mathfrak{c}})}^{bir}, \nu)$.
By Remark \ref{rk_bir_rig}, $\orb^\mathfrak{c}$ is birationally rigid and the statement follows from \cite[Theorem 4.4 (2)]{Lo2016} applied to the birational sheet $\overline{\mathfrak{J}_{\mathfrak{c_g}(r)} (\mathfrak{c}, \orb^{\mathfrak{c}})}^{bir}$. We remark that \cite[Theorem 4.4 (2)]{Lo2016} extends to the case of a reductive Lie algebra.

(ii) By (i), we have that $(\overline{\mathbb{J}(\tau)}^{bir}, rv) \sim_{se} (\overline{\mathfrak{J}_{\mathfrak{c_g}(r)} (\mathfrak{c}, \orb^{\mathfrak{c}})}^{bir}, \nu)$ are both unibranch.
Let $\psi_{\mathbb{J}} \colon \widetilde{\mathbb{J}} \to \overline{\mathbb{J}}^{bir}$ and $\psi_{\mathfrak{J}} \colon \widetilde{\mathfrak{J}} \to \overline{\mathfrak{J}_{\mathfrak{c_g}(r)} (\mathfrak{c}, \orb^{\mathfrak{c}})}^{bir}$ denote the normalization maps of $ \overline{\mathbb{J}}^{bir}$ and  $\overline{\mathfrak{J}_{\mathfrak{c_g}(r)} (\mathfrak{c}, \orb^{\mathfrak{c}})}^{bir}$, respectively.
Then $(\widetilde{\mathbb{J}}, \psi_\mathbb{J}^{-1}(rv)) \sim_{se} (\widetilde{\mathfrak{J}}, \psi_\mathfrak{J}^{-1}(\nu))$ by Lemma \ref{lem_normalization} and we conclude by \cite[Theorem 4.4 (2)]{Lo2016}.
\end{proof}

\begin{theorem} \label{thm_smooth}
Let $G$ be semisimple simply connected and let its simple factors have classical root system. Then the birational sheets of $G$ are smooth.
\end{theorem}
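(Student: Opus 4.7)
The plan is to combine the local-structure result of Theorem~\ref{thm_locbirsheet} with Losev's smoothness theorem for birational sheets in simple classical Lie algebras, reducing the question to a local statement about nilpotent orbits in a reductive centralizer.

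First I would fix an arbitrary point $rv \in \overline{\mathbb{J}(\tau)}^{bir}$ (up to $G$-conjugacy we may assume $r\in Z^\circ s$) and invoke Theorem~\ref{thm_locbirsheet} to get the smooth equivalence
\[
\left(\overline{\mathbb{J}(\tau)}^{bir}, rv\right) \sim_{se} \left(\overline{\mathfrak{J}_{\mathfrak{c_g}(r)}(\mathfrak{c}, \orb^{\mathfrak{c}})}^{bir}, \nu\right),
\]
with $\orb^{\mathfrak{c}}$ birationally rigid in $\mathfrak{c}$ (by Remark~\ref{rk_bir_rig}, since $\orb^{C}$ is birationally rigid by the definition of $\mathscr{BB}(G)$), so that the target is a genuine birational sheet in $\mathfrak{c_g}(r)$. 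Since smooth equivalence preserves the property of being smooth at a point, it suffices to show that this birational sheet is smooth at $\nu$, and since $rv$ was arbitrary this will imply $\overline{\mathbb{J}(\tau)}^{bir}$ is smooth everywhere.

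Next I would verify that $\mathfrak{c_g}(r)$ is a reductive Lie algebra all of whose simple summands are of classical type. Because $G$ is semisimple simply connected, $C_G(r)=C_G(r)^\circ$ is connected reductive by Steinberg's theorem; and the semisimple-centralizer analysis in each classical series ($\mathsf{A}$, $\mathsf{B}$, $\mathsf{C}$, $\mathsf{D}$) shows that $C_G(r)$ is, up to the central torus, a product of smaller classical groups. Hence $\mathfrak{c_g}(r)=\mathfrak{z}\oplus\bigoplus_i \mathfrak{g}_i$ with each $\mathfrak{g}_i$ simple classical. Birational sheets of such a reductive Lie algebra decompose as products of birational sheets in the simple factors (and a trivial affine-space factor contributed by $\mathfrak{z}$), because the generalized Springer map and the birationality criterion factor along the decomposition.

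Finally I would apply Losev's smoothness theorem (\cite[\S 4]{Lo2016}) to each simple classical factor: every birational sheet of $\mathfrak{g}_i$ is smooth. Taking the product and throwing in the smooth factor $\mathfrak{z}$, the birational sheet $\overline{\mathfrak{J}_{\mathfrak{c_g}(r)}(\mathfrak{c}, \orb^{\mathfrak{c}})}^{bir}$ is smooth at every point, in particular at $\nu$. Transporting smoothness back through the smooth equivalence established above gives smoothness of $\overline{\mathbb{J}(\tau)}^{bir}$ at $rv$, and since $rv$ was an arbitrary point of the birational sheet, the proof is complete. The only non-routine point is the reduction step — that the local model at $rv$ really is a birational sheet (not merely a birational closure of a Jordan class) inside a Lie algebra whose simple factors remain classical — but both ingredients have already been assembled in Theorem~\ref{thm_locbirsheet} and in the classical-type centralizer structure, so no serious obstacle remains.
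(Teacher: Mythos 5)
Your proposal is correct and follows essentially the same route as the paper: reduce to the local model at $rv$ via Theorem~\ref{thm_locbirsheet}, observe that $\mathfrak{c_g}(r)$ is reductive with classical simple summands so the local model is a product of a vector space and birational sheets in simple classical Lie algebras, and invoke Losev's smoothness result. The only difference is that you spell out explicitly the pseudo-Levi/centralizer argument showing the simple factors of $\mathfrak{c_g}(r)$ remain classical, which the paper leaves implicit.
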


\begin{proof}
We prove that for $\tau = (C, Z^\circ s, \orb^C) \in \mathscr{BB}(G)$ and $rv \in \overline{\mathbb{J(\tau)}}^{bir}$ with $r \in Z^\circ s$, the birational sheet $\overline{\mathbb{J(\tau)}}^{bir}$ is smooth at $rv$.
By Theorem \ref{thm_locbirsheet}, from which we retain notation, we have  $(\overline{\mathbb{J}(\tau)}^{bir}, rv) \sim_{se} (\overline{\mathfrak{J}_{\mathfrak{c_g}(r)}(\mathfrak{c}, \orb^{\mathfrak{c}})}^{bir}, \nu)$.
The  algebra $\mathfrak{c_g}(r)$ is reductive, hence  $\overline{\mathfrak{J}_{\mathfrak{c_g}(r)}(\mathfrak{c}, \orb^{\mathfrak{c}})}^{bir}$ decomposes as a product of a vector space and some birational sheets in simple classical Lie subalgebras of $[\mathfrak{c_g}(r),\mathfrak{c_g}(r)]$, and all such objects are smooth (see \cite[Remark 4.10]{Lo2016}).
\end{proof}

%\section{Acknowledgements}
%This research was produced during my PhD at Universit\`a degli Studi di Padova and partially supported by DOR1898721/18,  DOR1717189/17 and BIRD179758/17. 
%Special thanks go to Giovanna Carnovale and Francesco Esposito for patient and helpful discussions which greatly improved the manuscript. 
%I would also like to thank Mauro Costantini for his remarks and George Lusztig for his email which led to Remark \ref{rk_lusztig}.

\end{document}